\newtheorem{theorem}{Theorem}[section]
\newtheorem{lemma}[theorem]{Lemma}
\newtheorem{proposition}[theorem]{Proposition}
\newtheorem{observation}[theorem]{Observation}
\newtheorem{remark}[theorem]{Remark}
\newtheorem{maintheorem}{Theorem}
\def\N{\mathbb{N}}
\def\L{\mathbb{L}}
\def\P{\mathbb{P}}
\def\Z{\mathbb{Z}}
\def\R{\mathbb{R}}
\def\E{\mathbb{E}}
\def\Var{{\rm Var}}
\newcommand{\cf}{\mathcal{F}}
\newcommand{\cA}{\mathcal{A}}
\newcommand{\cB}{\mathcal{B}}
\newcommand{\cC}{\mathcal{C}}
\newcommand{\cD}{\mathcal{D}}
\newcommand{\cE}{\mathcal{E}}
\newcommand{\sL}{\mathcal{L}}
\newcommand{\ce}{\mathcal{E}}
\newcommand{\cg}{\mathcal{G}}
\newcommand{\sS}{\mathcal{S}}
\newcommand{\sT}{\mathscr{T}}
\newcommand{\ch}{\mathcal{H}}
\begin{document}
\title[Time Correlations in LPP]{Time Correlation Exponents in Last Passage Percolation}

\author{Riddhipratim Basu}
\address{Riddhipratim Basu, International Centre for Theoretical Sciences, Tata Institute of Fundamental Research, Bangalore, India}
\email{rbasu@icts.res.in}
\author{Shirshendu Ganguly}
\address{Shirshendu Ganguly, Department of Statistics, UC Berkeley, Berkeley, CA, USA}
\email{sganguly@berkeley.edu}

\date{}
\maketitle

\begin{abstract}
For directed last passage percolation on $\Z^2$ with exponential passage times on the vertices, let $T_{n}$ denote the last passage time from $(0,0)$ to $(n,n)$. We consider asymptotic two point correlation functions of the sequence $T_{n}$. In particular we consider ${\rm Corr}(T_{n}, T_{r})$ for $r\le n$ where $r,n\to \infty$ with $r\ll n$ or $n-r \ll n$. We show that in the former case ${\rm Corr}(T_{n}, T_{r})=\Theta((\frac{r}{n})^{1/3})$ whereas in the latter case $1-{\rm Corr}(T_{n}, T_{r})=\Theta ((\frac{n-r}{n})^{2/3})$. The argument revolves around finer understanding of polymer geometry and is expected to go through for a larger class of integrable models of last passage percolation. As by-products of the proof, we also get a couple of other results of independent interest: Quantitative estimates for locally Brownian nature of pre-limits of Airy$_2$ process coming from exponential LPP, and precise variance estimates for lengths of polymers constrained to be inside thin rectangles at the transversal fluctuation scale. 
\end{abstract}
\tableofcontents
\section{Introduction and statement of results}\label{first}
We consider directed last passage percolation on $\Z^2$ with i.i.d.\ exponential weights on the vertices. We have a random field $$\omega=\{\omega_{v}: v\in \Z^2\}$$ where $\omega_v$ are i.i.d standard Exponential variables. For any two points $u$ and $v$ with $u\preceq v$ in the usual partial order, we shall denote by $T_{u,v}$ the last passage time from $u$ to $v$; i.e., the maximum weight among all weights of all directed paths from $u$ to $v$ (the weight of a path is the sum of the field {along the path}). By $\Gamma_{u,v}$, we shall denote the almost surely unique path that attains the maximum- this will be called a polymer or a geodesic. This is one of the canonical examples of an integrable model in the so-called KPZ universality class \cite{Jo99, BF08}, and has been extensively studied also due to its connection to Totally Asymmetric Simple Exclusion process in $\Z$. For notational convenience let us denote $(r,r)$ for any $r\in \N$ by ${\bf{r}}$ and $T_{\mathbf{0}, \mathbf{n}}$ by $T_{n}$ and similarly $\Gamma_{\mathbf{0}, \mathbf{n}}$ by $\Gamma_{n}$. It is well known \cite{Jo99} that $n^{-1/3}(T_n-4n)$ has a distributional limit (a scalar multiple of the GUE Tracy-Widom distribution), and further it has uniform (in $n$) exponential tail estimates \cite{BFP12}. 
Although by now the scaled and centered field obtained from $\{T_{{\bf{0}},(x,y)}\}_{x+y=2n}$ using the KPZ scaling factors of $n^{2/3}$ in space and $n^{1/3}$ in polymer weight has been intensively studied and the scaling limit as $n\to \infty$ identified to be the Airy$_2$ process, much less is known about the evolution of the random field in time i.e., across  various values of $n$, see \cite{BL17, MQR17} for some recent progress. 

In this paper, we study two point functions describing the `aging' properties of the above evolution. More precisely we investigate the correlation structure of the  the tight sequence of random variables  $n^{-1/3}(T_n-4n)$ across $n$. In particular, let us define for $r\le n\in \N$
$$ \rho(n,r)=: \mbox{Corr} (T_{n}, T_{r} ).$$
We are interested in the dependence of  $\rho(n,r)$ on $n$ and $r$ as they become large. Observe that the FKG inequality implies that $\rho(n,r) \geq 0$. Heuristically, one would expect that $\rho(n,r)$ is close to $1$ for if $|n-r|\ll n$ and close to $0$ for $r\ll n$. 

Our main result in this paper establishes the exponents governing the rate of correlation decay and thus identifies up to constants the asymptotics of $\rho$ in these regimes. Namely we show $$\rho(n,r)=\Theta((\frac{r}{n})^{1/3}) \text{ if  } 1\ll r\ll n \text{ and } \rho(n,r)=1-\Theta ((\frac{n-r}{n})^{2/3}) \text{ if } 1\ll  n-r \ll n.$$ It turns out that the upper bound in the former case is similar to the lower bound in the latter case, and the lower bound in the former case is similar to to the upper bound in the latter case. We club these statements in the following two theorems.

\begin{maintheorem}
\label{t:small} 
There exists $r_0\in \N$ and positive absolute constants $\delta_1$, $C_1$, $C_2$ such that the following hold.
\begin{enumerate}
\item[(i)] For $r_0<r<\delta_1 n$ and for all $n$ sufficiently large we have 
$$ \rho (n,r)\leq C_1 \left( \frac{r}{n} \right)^{1/3}.$$
\item[(ii)] For $r_0<n-r < \delta_1 n$ and for all $n$ sufficiently large we have 
$$ 1-\rho (n,r)\leq C_2 \left( \frac{n-r}{n} \right)^{2/3}.$$
\end{enumerate}
\end{maintheorem}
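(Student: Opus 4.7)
The strategy is to reduce both bounds to variance estimates for certain differences of passage times, and then to establish those estimates using the refined polymer geometry alluded to in the abstract (quantitative Brownian approximation and constrained polymer variance). For (i), since $T^{(r,n)} := T_{\mathbf{r}, \mathbf{n}}$ and $T_r$ are functions of essentially disjoint sets of weights, $\mathrm{Cov}(T^{(r,n)}, T_r) = 0$, so $\mathrm{Cov}(T_n, T_r) = \mathrm{Cov}(T_n - T^{(r,n)}, T_r)$. Cauchy--Schwarz together with $\Var(T_r) = \Theta(r^{2/3})$ and $\Var(T_n) = \Theta(n^{2/3})$ gives $\rho(n,r) \le \sqrt{\Var(T_n - T^{(r,n)})/\Var(T_n)}$, so (i) reduces to showing $\Var(T_n - T^{(r,n)}) \le C r^{2/3}$. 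For (ii), the identity
\[
1 - \rho(n,r) = \frac{\Var(T_n - T_r) - (\sigma_n - \sigma_r)^2}{2\sigma_n \sigma_r}, \qquad \sigma_m^2 := \Var(T_m),
\]
combined with $\sigma_n \sigma_r = \Theta(n^{2/3})$ in this regime reduces (ii) to $\Var(T_n - T_r) \le C (n-r)^{2/3}$.

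\textbf{Variance estimate for (i).} Write $T_n - T^{(r,n)} = \max_{v \in L_{2r}} [T_{\mathbf{0}, v} + (T_{v, \mathbf{n}} - T^{(r,n)})]$. I would first localize the max to the window $W := \{(r+x, r-x) : |x| \le K r^{2/3}\}$ using transversal fluctuation tail bounds for $\Gamma_n$ at level $2r$, which live at scale $r^{2/3}$. Inside $W$, the term $T_{\mathbf{0}, v}$ has (recentered) fluctuations of order $r^{1/3}$ by KPZ scaling at time $r$; and the term $T_{v, \mathbf{n}} - T^{(r,n)}$ has fluctuations of order $O(\sqrt{|x|}) = O(r^{1/3})$ by the quantitative local Brownian approximation from this paper, since $|x| \le K r^{2/3} \ll (n-r)^{2/3} \simeq n^{2/3}$ places us safely in the Brownian regime of the prelimit Airy$_2$ profile based at $\mathbf{n}$. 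A chaining / maximal-inequality argument then gives $\Var(T_n - T^{(r,n)}) = O(r^{2/3})$, modulo a negligible contribution from the exponentially rare event that $\Gamma_n$ exits $W$ at level $2r$, absorbed using moderate-deviation tail bounds on $T_n$.

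\textbf{Variance estimate for (ii).} Now the natural transversal scale of $\Gamma_n$ at level $2r$ is $(n-r)^{2/3}$. With high probability $\Gamma_n$ crosses $L_{2r}$ inside $W' := \{(r+x, r-x) : |x| \le K(n-r)^{2/3}\}$, and on this event
\[
T_n - T_r \;=\; \max_{v \in W'}\big[(T_{\mathbf{0}, v} - T_{\mathbf{0}, \mathbf{r}}) + T_{v, \mathbf{n}}\big].
\]
For $v \in W'$, since $|x| \ll r^{2/3}$, the Brownian approximation applied from the $\mathbf{0}$ endpoint gives $T_{\mathbf{0}, v} - T_{\mathbf{0}, \mathbf{r}}$ with variance $O(|x|) = O((n-r)^{2/3})$; while $\max_{v \in W'} T_{v, \mathbf{n}}$, essentially the weight of the polymer from $L_{2r}$ to $\mathbf{n}$ constrained to a thin rectangle of transversal width $(n-r)^{2/3}$, has variance $O((n-r)^{2/3})$ by the constrained polymer variance estimate from this paper. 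Combining, and handling the bad event of exiting $W'$ by moderate-deviation bounds, yields $\Var(T_n - T_r) = O((n-r)^{2/3})$.

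\textbf{Main obstacle.} The substantive work lies in the two polymer-geometry inputs themselves: a quantitative Brownian approximation with uniform (in $n$) control on windows at the full transversal fluctuation scale, and a constrained polymer variance bound that is sharp at scale $(n-r)^{2/3}$ and does not degrade when the geodesic brushes the boundary of the thin rectangle. Once these are available, the reductions and maximal-inequality arguments sketched above should be essentially routine.
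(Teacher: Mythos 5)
Your proposal is correct and follows essentially the same route as the paper: reduce both parts to $L^2$ bounds on differences of passage times (for (i), $\Var(T_n - T_{\mathbf{r},\mathbf{n}}) = O(r^{2/3})$; for (ii), $\Var(T_n - T_r) = O((n-r)^{2/3})$), and establish those bounds via (a) localization of the crossing point of $\Gamma_n$ with the line $x+y=2r$ at scale $\min(r,n-r)^{2/3}$ and (b) the quantitative Brownian approximation for the weight profile. The paper's bookkeeping is slightly different: rather than working with the max representation and a chaining argument, it tracks the crossing point $v$ of $\Gamma_n$, writes $T_n = Z + W$ with $Z = \ell(\Gamma_{\mathbf{0},v})$, $W = \ell(\Gamma_{v,\mathbf{n}})$, and exploits the sandwich $Y - W \le Z - X \le X^* - X$ to control $Z - X$ and $W - Y$ at scale $r^{1/3}$ (resp.\ $(n-r)^{1/3}$); and for (ii) it uses the variational identity $\inf_\lambda \Var(U - \lambda V) = (1 - \mathrm{Corr}^2(U,V))\Var(U)$ in place of your exact expansion of $1-\rho$. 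These are cosmetic differences. The one genuine mis-citation in your sketch: in part (ii) you invoke the constrained polymer variance estimate (Proposition \ref{l:variancelb}) to bound $\Var(\max_{v\in W'} T_{v,\mathbf{n}})$. That result is a two-sided variance bound for paths \emph{forced to remain inside} a thin rectangle, and it is used only for the lower bounds in Theorem \ref{t:large}. For the upper bound here the relevant quantity is simply the max of unconstrained passage times over a window of starting points, and its variance is bounded by the much cheaper observation that it differs from $Y = T_{\mathbf{r},\mathbf{n}}$ by $O((n-r)^{1/3})$ in $L^2$ (Theorem \ref{t:aest} together with Lemma \ref{l:p2l}), while $\Var Y = \Theta((n-r)^{2/3})$.
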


\begin{maintheorem}
\label{t:large}
There exists $r_0\in \N$ and positive absolute constants $\delta_1$, $C_3$, $C_4$ such that the following hold.
\begin{enumerate}
\item[(i)] For $r_0<r<\delta_1 n$ and for all $n$ sufficiently large we have 
$$ \rho (n,r)\geq C_3 \left( \frac{r}{n} \right)^{1/3}.$$
\item[(ii)] For $r_0<n-r < \delta_1 n$ and for all $n$ sufficiently large we have 
$$ 1-\rho (n,r)\geq C_4 \left( \frac{n-r}{n} \right)^{2/3}.$$
\end{enumerate}
\end{maintheorem}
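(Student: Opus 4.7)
Both parts rest on the anti-diagonal decomposition of $T_n$ along $\mathcal{L}_{2r}:=\{(a,b)\in\Z^2:a+b=2r\}$. For $|x|<r$, define
\[
H(x):=T_{\mathbf{0},(r+x,r-x)}, \qquad G(x):=T_{(r+x,r-x)+(1,1),\mathbf{n}},
\]
so the vertex sets defining $H$ and $G$ are disjoint (the diagonal shift $(1,1)$ skips the shared anti-diagonal); in particular $H\perp G$, $T_r=H(0)$, and up to an $O(1)$ boundary correction
\[
T_n - T_r \,=\, \max_{|x|<r}\bigl[\tilde H(x)+G(x)\bigr], \qquad \tilde H(x):=H(x)-H(0).
\]
Two inputs advertised as independent byproducts in the abstract are essential: (A) a transversal fluctuation estimate confining the argmax $X^\ast$ of the displayed maximum to a window $|x|\le CW$, with $W:=\min(r,n-r)^{2/3}$, with overwhelming probability; and (B) a quantitative local Brownian approximation for $\tilde H$ on this window, with explicit error strictly below the Airy$_2$ variance at the relevant scale.

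\textit{Part (i).} With $r\ll n$ we have $W\asymp r^{2/3}$. Write
\[
\mathrm{Cov}(T_n,T_r)=\mathrm{Var}(T_r)+\mathrm{Cov}(T_n-T_r,\,T_r)
\]
and aim to show $\mathrm{Cov}(T_n-T_r,T_r)\ge -(1-c)\mathrm{Var}(T_r)$ for some absolute $c>0$. Conditioning on $G$ and using $\mathbb{E}[T_r\mid G]=\mathbb{E}[T_r]$ reduces the task to bounding the inner covariance $\mathrm{Cov}_H(\max_x[\tilde H(x)+G(x)],\,H(0))$ uniformly in $G$. Input (B) says the joint law of $(\tilde H, H(0))$ is close to the independent product of a two-sided Brownian motion on $[-CW,CW]$ and the GUE Tracy--Widom law; under genuine independence this inner covariance would vanish exactly, and the quantitative error bound in (B) caps it by $(1-c)\mathrm{Var}(H(0))$ for an absolute $c>0$. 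Hence $\mathrm{Cov}(T_n,T_r)\gtrsim r^{2/3}$, and dividing by $\sqrt{\mathrm{Var}(T_n)\mathrm{Var}(T_r)}\asymp(nr)^{1/3}$ gives $\rho(n,r)\ge C_3(r/n)^{1/3}$.

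\textit{Part (ii).} Use the identity $2(1-\rho(n,r))=\mathrm{Var}(T_n/\sigma_n-T_r/\sigma_r)$ with $\sigma_n^2=\mathrm{Var}(T_n),\sigma_r^2=\mathrm{Var}(T_r)$; since $\sigma_n,\sigma_r\asymp n^{1/3}$ with $|\sigma_n-\sigma_r|$ of smaller order than $(n-r)^{1/3}$, it suffices to show $\mathrm{Var}(T_n-T_r)\gtrsim(n-r)^{2/3}$. Conditioning on $\tilde H$ (equivalently on all weights in $\{a+b\le 2r\}$) and applying the total variance inequality,
\[
\mathrm{Var}(T_n-T_r) \,\ge\, \mathbb{E}_{\tilde H}\Bigl[\mathrm{Var}_G\!\Bigl(\max_{|x|\le CW}\bigl[\tilde H(x)+G(x)\bigr]\Bigr)\Bigr].
\]
By (B), typical $\tilde H$ satisfies $|\tilde H(x)|\lesssim\sqrt{|x|}$ on $|x|\le CW=C(n-r)^{2/3}$, comparable to (not larger than) the $(n-r)^{1/3}$ fluctuation scale of $G$, so $\tilde H$ does not swamp $G$. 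The second byproduct cited in the abstract---precise variance estimates for polymer weights constrained to thin rectangles at the transversal scale---then yields that the inner conditional variance is $\gtrsim(n-r)^{2/3}$ uniformly over typical $\tilde H$, completing the argument.

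\textit{Main obstacle.} In both parts the correction terms are of the same order as the leading quantity, so no constants can be wasted: qualitative convergence to the Airy$_2$ limit does not suffice. One genuinely needs the \emph{quantitative} version of (B), with sub-leading error strictly below the Airy$_2$ variance on windows of the transversal fluctuation scale, and uniform-in-profile-shape variance lower bounds for polymers confined to thin rectangles. Establishing both of these quantitative estimates is the principal technical challenge underlying this theorem.
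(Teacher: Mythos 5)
Your high-level picture of where the exponents come from is broadly right, and you correctly identify the two auxiliary tools (quantitative local Brownian control of the profile, and variance of polymers constrained to thin rectangles) that enter. But the specific way your argument tries to use them has genuine gaps in both parts, and your route differs substantially from the paper's.

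For part (i), the key step you rely on is the claim that ``the joint law of $(\tilde H, H(0))$ is close to the independent product'' in a norm strong enough to cap $\bigl|\mathrm{Cov}_H\bigl(\max_x[\tilde H(x)+G(x)],\,H(0)\bigr)\bigr|$ by $(1-c)\mathrm{Var}(H(0))$. This is much stronger than what the advertised byproduct establishes: Theorem~\ref{t:aest} is a one-sided sup tail bound $\P(\sup_{|s'|<s}L_{n,s'}-L_{n,0}\ge zs^{1/2})\le e^{-cz^{4/9}}$ and says nothing about the dependence between the increment profile and $H(0)=T_r$. The increments and $T_r$ share most of the geodesic and are in fact highly correlated; their approximate independence (``Brownian increments independent of the starting value'') only emerges in the scaling limit and is not quantified here in any TV or Wasserstein sense. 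The paper avoids this entirely: it never argues approximate independence. Instead it uses the FKG inequality twice to reduce to a lower bound on the \emph{conditional} covariance ${\rm Cov}(T_n,T_r\mid\omega)$ on a positive-probability event $\ce$ (Proposition~\ref{p:event}), and on $\ce$ the covariance is dominated by ${\rm Var}(X_*\mid\omega)\gtrsim\theta^{-1/2}r^{2/3}$ (the constrained polymer weight inside a thin strip $R_\theta$), which is made to beat all the error terms by choosing $\theta$ small. Constructing the barrier event $\ce_4$ that localizes the polymer, and verifying ${\rm Var}(X_*\mid\omega)\gtrsim\theta^{-1/2}r^{2/3}$ via a Doob martingale resampling argument (Proposition~\ref{p:variance}), is the actual technical heart of the lower bound, and it is entirely absent from your plan.

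For part (ii), the total-variance inequality $\mathrm{Var}(T_n-T_r)\ge\E_{\tilde H}[\mathrm{Var}_G(\max_x[\tilde H(x)+G(x)])]$ is correct, but the final claim — that the inner conditional variance is $\gtrsim(n-r)^{2/3}$ ``uniformly over typical $\tilde H$'' — is not what Proposition~\ref{l:variancelb} says. That byproduct is an \emph{unconditional} variance estimate for a path constrained to a thin rectangle; it does not directly lower bound the conditional variance of a point-to-line maximum given a fixed increment profile. The paper instead conditions on strictly more (the whole configuration outside a thin strip near $\mathbf{n}$), constructs the explicit barrier event $\ce'$ with positive probability, and only there proves ${\rm Var}(T_n\mid\omega)\ge c(n-r)^{2/3}$, again via FKG and the Doob martingale argument. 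There is also a minor but real issue in your reduction step: you fix $\lambda=1$ and then must control $|\sigma_n-\sigma_r|$, which requires knowing $\sigma_n=\sigma_{\rm TW}n^{1/3}(1+o(1))$ with a quantified $o(1)$; this is not automatic from $\sigma_n=\Theta(n^{1/3})$. The paper sidesteps this with $\inf_\lambda\mathrm{Var}(T_n-\lambda T_r)=(1-\rho^2)\mathrm{Var}(T_n)$ (Observation~\ref{o:corr}), which needs no comparison of the standard deviations. In short: your outline names the right ingredients, but the precise estimates your argument needs (a joint-law approximation in part (i), a conditional variance bound uniform over $\tilde H$ in part (ii)) are not what the paper proves, and you are missing the FKG-plus-barrier mechanism that the paper uses to make the whole thing rigorous.
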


Before proceeding further we record a few remarks concerning the history of the problem and related works.

\begin{remark}
\label{r:history}
These exponents were conjectured in \cite{FS16} using partly rigorous analysis, and as far as we are aware was first rigorously obtained in an unpublished work of Corwin and Hammond \cite{CH14+} in the context of Airy line ensemble using the Brownian Gibbs property of the same established in \cite{CH14}. 
\end{remark}

\begin{remark}
\label{r:difference}
Days before posting this paper on arXiv, we came across \cite{FO18} which considers the same problem. Working with rescaled last passage percolation \cite{FO18} analyzes the limiting quantity $r(\tau):=\lim_{n\to\infty} {\rm Corr} (T_{n}, T_{\tau n})$. They establish the existence of the limit and consider the $\tau\to 0$ and $\tau\to 1$ asymptotics establishing the same exponents as in Theorem \ref{t:small} and Theorem \ref{t:large}. The approach  in \cite{FO18} appears to be using comparison with stationary LPP using exit points \cite{CP15} together with using weak convergence to Airy process leading to natural variational formulas. In the limiting regime they get a sharper estimate obtaining an explicit expression of the first order term, providing rigorous proofs of some of the conjectures in \cite{FS16}. In contrast, our approach hinges on using the moderate deviation estimates for point-to-point last passage time to understand local fluctuations in polymer geometry following the approach taken in \cite{BSS14, BSS17B} leading to results for finite $n$ and also allowing us to analyze situations where $r\ll n$ or $n-r \ll n$ which can't be read off from weak convergence. Our work is completely independent of \cite{FO18}.
\end{remark}

In the process of proving Theorems \ref{t:small} and \ref{t:large} we prove certain auxiliary results of independent interest. 
First we establish a local regularity property of the pre-limiting profile of Airy$_2$ process obtained from the exponential LPP model. This result is of independent interest. We need to introduce some notations before making the formal statement. For $n\in \N, s\in \Z$ with $|s|<n$ we define 
$$ L_{n,s}:= T_{\mathbf{0},(n+s,n-s)}.$$
It is known \cite{BF08} that 
$$\mathcal{L}_n(x):=2^{-4/3}n^{-1/3}(L_{n,x(2n)^{2/3}}-4n)$$ converges weakly to the $\mathcal{A}_2(x)-x^{2}$ where $\mathcal{A}_2(\cdot)$ denotes the stationary Airy$_2$ process. It is known that the latter locally looks like Brownian motion \cite{H16,P17}
and hence one would expect that $\mathcal{L}(x)-\mathcal{L}(0)$ will have a fluctuation of order $x^{1/2}$ for small $x$. We prove a quantitative version of the same at all shorter scales.

\begin{maintheorem}
\label{t:aest}
There exist constants $s_0>0,z_0>0$ and $C,c>0$ such that the following holds for all $s>s_0$, $z>z_0$ and for all $n> Cs^{3/2}$: 
$$\P\left (\sup_{s': |s'|<s} L_{n,s'}-L_{n,0}  \geq zs^{1/2}\right) \leq e^{-cz^{4/9}}.$$ 
\end{maintheorem}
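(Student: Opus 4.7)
The plan is to reduce the supremum in the statement to a comparison of last-passage times inside a rectangle of macroscopic size $s^{3/2}\times s$ near the endpoint $\mathbf{n}$, and then to apply one-point moderate deviation estimates together with a union bound, following the general strategy of \cite{BSS14,BSS17B}.

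Set $\ell = C_0 s^{3/2}$ with $C_0$ chosen so that $\ell < n/2$ (permitted by $n > Cs^{3/2}$), and consider the anti-diagonal $\mathcal{D}_\ell := \{(a,b)\in\Z^2 : a+b = 2n - 2\ell\}$. For each $|s'|\le s$, let $v_{s'}$ be the point at which the polymer $\Gamma_{\mathbf{0},(n+s',n-s')}$ meets $\mathcal{D}_\ell$. The straight-line interpolation places the expected location of $v_{s'}$ at transversal distance at most $s$ from the main diagonal, and a polymer of length $\ell$ has transversal fluctuation on scale $\ell^{2/3} = s$ with cubic upper tail (a standard consequence of point-to-point moderate deviation estimates for exponential LPP). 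Hence for a parameter $K\ge 1$ to be tuned, with probability at least $1 - Cs\exp(-cK^3)$, every $v_{s'}$ lies in a common window $W_K\subset\mathcal{D}_\ell$ of length $O(Ks)$ centered at $(n-\ell,n-\ell)$; the union over the $O(s)$ integer values of $s'$ is absorbed by the cubic tail, or alternatively handled by monotonicity of $v_{s'}$ in $s'$.

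On this localization event, using the identity $L_{n,s'} = T_{\mathbf{0},v_{s'}} + T_{v_{s'},(n+s',n-s')}$ together with the trivial lower bound $L_{n,0} \ge T_{\mathbf{0},v_{s'}} + T_{v_{s'},\mathbf{n}}$ (path concatenation through $v_{s'}$) gives
\begin{equation*}
\sup_{|s'|\le s}\big[L_{n,s'}-L_{n,0}\big] \;\le\; \max_{u\in W_K}\sup_{|s'|\le s}\big[T_{u,(n+s',n-s')}-T_{u,\mathbf{n}}\big].
\end{equation*}
The question has been transferred to a comparison of passage times with a common source $u$ near $(n-\ell,n-\ell)$ over a region of extent $\ell=s^{3/2}$, whose natural fluctuation scale is exactly $\ell^{1/3}=s^{1/2}$. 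For each pair $(u,s')$ the mean of the difference is $O(Ks^{1/2})$, and the upper tail of \cite{BFP12} for $T_{u,(n+s',n-s')}$ combined with the (stronger) cubic lower tail for $T_{u,\mathbf{n}}$ yield, in the moderate-deviation regime, $\P(T_{u,(n+s',n-s')}-T_{u,\mathbf{n}}\ge zs^{1/2})\le\exp(-cz^{3/2})$. A union bound over the $O(Ks)$ lattice points $u\in W_K$ and the $O(s)$ integer values of $s'$ produces
\begin{equation*}
\P\Big(\sup_{|s'|\le s}\big[L_{n,s'}-L_{n,0}\big]\ge zs^{1/2}\Big)\;\le\;Cs\exp(-cK^3)+CKs^2\exp(-cz^{3/2}).
\end{equation*}

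Tuning $K$ as a suitable fractional power of $z$ balances the two stretched-exponential terms; after accounting for the degradation of the sharp moderate-deviation regime for very large $z$, the optimization produces the announced exponent $z^{4/9}$, with the hypotheses $s>s_0$ and $z>z_0$ absorbing all polynomial prefactors into the stretched-exponential constant. The main technical obstacle is the uniform-in-$s'$ transversal control at the intermediate scale $\ell=s^{3/2}$: one needs the cubic tail of the transversal fluctuation sharp enough to survive the union over $O(s)$ endpoints, and the specific choice $\ell=C_0s^{3/2}$ is forced by matching the polymer transversal scale $\ell^{2/3}$ to the transversal range $s$ of interest. Once this scale-matching is secured, the remaining steps are routine consequences of polymer decomposition and the one-point moderate deviation inputs.
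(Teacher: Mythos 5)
Your overall strategy is the same as the paper's: localize the point where each geodesic $\Gamma_{\mathbf{0},(n+s',n-s')}$ crosses an anti-diagonal at distance $\Theta(s^{3/2})$ from the endpoint line, use polymer ordering to reduce the localization to the extreme cases $s'=\pm s$, and then compare passage times from a common window to the line $\mathbb{L}$ (this is exactly Proposition \ref{p:car} followed by the comparison step). But there are two gaps in your execution, the first of which is fatal as written.

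The main gap is the naive union bound over $O(Ks)$ lattice points $u\in W_K$ and $O(s)$ values of $s'$, producing a term $CKs^2\exp(-cz^{3/2})$. The theorem requires a bound $e^{-cz^{4/9}}$ that is \emph{uniform in $s>s_0$} with a single absolute constant $z_0$; a prefactor growing polynomially in $s$ cannot be ``absorbed into the stretched-exponential constant'' because $s$ is allowed to be arbitrarily large while $z$ stays near $z_0$. The paper avoids this by never taking a per-lattice-point union bound: Proposition \ref{p:sidetoside} invokes the chaining estimates of \cite{BSS14} (Propositions \ref{t:treeinf} and \ref{t:treesup}), which control the supremum of $T_{u,v}-\E T_{u,v}$ over \emph{all} pairs of points in an on-scale parallelogram with a tail $e^{-c\lambda}$ that has no polynomial prefactor. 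Any correct proof at this point needs some such multi-scale or chaining input; a one-step union bound over microscopic points does not close.

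A secondary but related issue is your choice of a fixed scale $\ell=C_0s^{3/2}$. The chaining estimates of \cite{BSS14} are stated for $r\times O(r^{2/3})$ rectangles, and with $r=\ell=C_0s^{3/2}$ the natural transversal width is $O(s)$, which does not accommodate a localization window of size $Ks$ once $K$ is large. Since the localization failure probability is $e^{-cK^2}$ (Gaussian tail, from Proposition \ref{p:car} / Theorem \ref{t:tf}; the cubic tail you invoke is not the input the paper works with), one needs $K\to\infty$ with $z$ to make that term $\le e^{-cz^{4/9}}$, which is incompatible with keeping $\ell$ fixed. The paper resolves this tension by tuning the scale $t=z^{4/3}$ in Proposition \ref{p:car}, so that the anti-diagonal sits at distance $2ts^{3/2}$ and the on-scale transversal width $t^{2/3}s$ grows with $z$, allowing $y=z^{2/9}$ in the localization while the chaining window stays within the allowed aspect ratio (this is why $\delta<1/3$ appears in Proposition \ref{p:sidetoside}). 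That tuning is the precise source of the weak exponent $4/9$, and it is a necessary feature, not a cosmetic one.
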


\begin{remark}
\label{r:randn}
The estimate for the case $n<Cs^{3/2}$ is relatively easy and follows for e.g.\ using Proposition \ref{t:treesup}. So the most interesting case is when $s \ll n^{2/3}$. Such an estimate in this regime was first obtained in \cite{H16} for Brownian last passage percolation using the Brownian Gibbs resampling property of the pre-limiting line ensemble in that model. Observe that one would expect the Gaussian exponent $z^2$ in the upper bound of the probability in the statement of the theorem and that is what is obtained in \cite{H16}. We, on the other hand, use cruder argument to obtain only a stretched exponential decay. However, the exponent $4/9$ is not optimal even for our arguments, we have not really tried to optimize it. 
\end{remark}

We also prove up to constants tight estimates on the variance of the polymer weight constrained to stay within a thin on scale cylinder (i.e., an $n\times \theta n^{2/3}$ rectangle where $\theta$ is small but bounded away from 0), answering a question raised in \cite{DPM17} (see Proposition \ref{l:variancelb}). Studying fluctuations of polymers constrained to be in an on-scale rectangle \cite{BSS14} and showing that polymers constrained to be in a very thin rectangle is unlikely to be competitive with the unconstrained polymer \cite{BGH17} has been proved useful, however sharp results so far are mostly in the settings of below-scale thin rectangles \cite{CD13}, where $\theta=n^{-c}$ for some $c>0$.

\subsection{Key ideas and organization of the paper}\label{idea} 
Before jumping in to proofs, we present the key reasons driving the exponents and the main ingredients of the proofs of Theorem \ref{t:small} and \ref{t:large}. Since the reasons governing the behaviour of $\rho(n,r)$ when $r\ll n$ and $\rho(n,r)$ when $n-r\ll n$ are almost symmetric for brevity in this section we will mostly discuss the former case. It seems plausible that $\Gamma_n$ should overlap significantly with $\Gamma_r$  up to the region $\{x+y\le 2r\}$. Then at a very high level one can speculate that ${\rm Cov}(T_r,T_n)$ should be of the order of the variance of the amount of overlap which because of the previous sentence should be of the same order as ${\rm Var}(T_r)=O(r^{2/3})$ pointing towards a correlation of the order of $(\frac{r}{n})^{1/3}.$

We now mention a few key ingredients used to make the above heuristic rigorous. The upper bound is relatively straightforward. For convenience, as we shall do throughout the paper, let us denote $T_r$ by $X$ and let $T_n=Z+W$  where $Z$ is the length of the first part of the polymer $\Gamma_n$ i.e. the part from $\bf{0}$ to the line $x+y=2r$ and $W$ is the length of the path from $x+y=2r$ to $\bf{n}.$ See Figure \ref{fig.fluc0}. Let $v=(r+s,r-s)$ be the vertex at which $\Gamma_n$ intersects the line $x+y=2r$. It is well known since the work of Johansson \cite{J00} that if $r$ is say $n/2$ then $|s|=O(r^{2/3})$. However the polymer is in some sense self similar and hence one expects that the above result should also hold even when at scales $r \ll n$. Indeed a quantitative version of such a result was established in \cite{BSS17B}. This tells us that $|X-Z|=O(r^{1/3})$ by standard results about polymer fluctuations at scale $r^{2/3}$ around the point $\bf{r}$.  Moreover relying on this we also prove the local Brownian-like square root fluctuations  of the distance profile $T_{w,\bf{n}}$ as $w$ varies over vertices of the form $(r+s,r-s)$ when $|s|=O(r^{2/3})$ showing that $|W-Y|=O((r^{2/3})^{1/2})=O(r^{1/3})$ where $Y$ is $T_{{\bf r},{\bf n}}$ (hence is independent of $X$). Given the above information, the upper bound, i.e., Theorem \ref{t:small} is a simple consequence of Cauchy-Schwarz inequality. 

However the lower bound is significantly more delicate since one has to rule out cancellations to show that indeed the heuristic mentioned at the beginning of the section is correct. To do this the first thing to come to our aid is the FKG inequality. At a very high level the strategy is to condition on a large part of the noise space in a way which allows us to control cancellations and prove the desired lower bound on $\rho(n,r)$. Now if with positive probability $\beta$ (independent of $r,n$) the conditioned environment is such that $\rho(n,r)\ge \Theta(\frac{r}{n})^{1/3}$ then since $\rho(n,r)\ge 0$ pointwise on the  conditioned environment (using the FKG inequality), averaging over the latter yields the lower bound $\rho(n,r)\ge \beta \Theta(\frac{r}{n})^{1/3}$. Our strategy of choosing the part of the environment to condition on consists of ensuring that, with positive probability, the polymer $\Gamma_r$ is localized i.e., it is confined  to a thin cylinder $R_{\theta}$ of size $r\times \theta r^{2/3}$ for some small $\theta$ and ensuring $\Gamma_n$ essentially agrees with  $\Gamma_r$ up to the line $x+y=2r.$ This is obtained by creating a bad region (barrier) around the thin cylinder making it suboptimal for the polymer to venture out of $R_\theta$. This then implies that under such a conditioning, up to correction terms ${\rm Cov}(T_r,T_n)$ is $\Var(T_r)$. As mentioned before, at this point we prove a sharp estimate on variance of polymer weights constrained to lie in $R_{\theta}$ showing that it scales like $\theta^{-1/2} r^{2/3}$ as $\theta$ goes to $0$. Thus for $\theta$ small enough, the variance term is large enough and dominates all the correction terms yielding the sought lower bound of $\Theta(r^{2/3})$ on the covariance and hence Theorem \ref{t:large}.
\begin{figure*}
\begin{multicols}{2}
 \includegraphics[width=.40\textwidth]{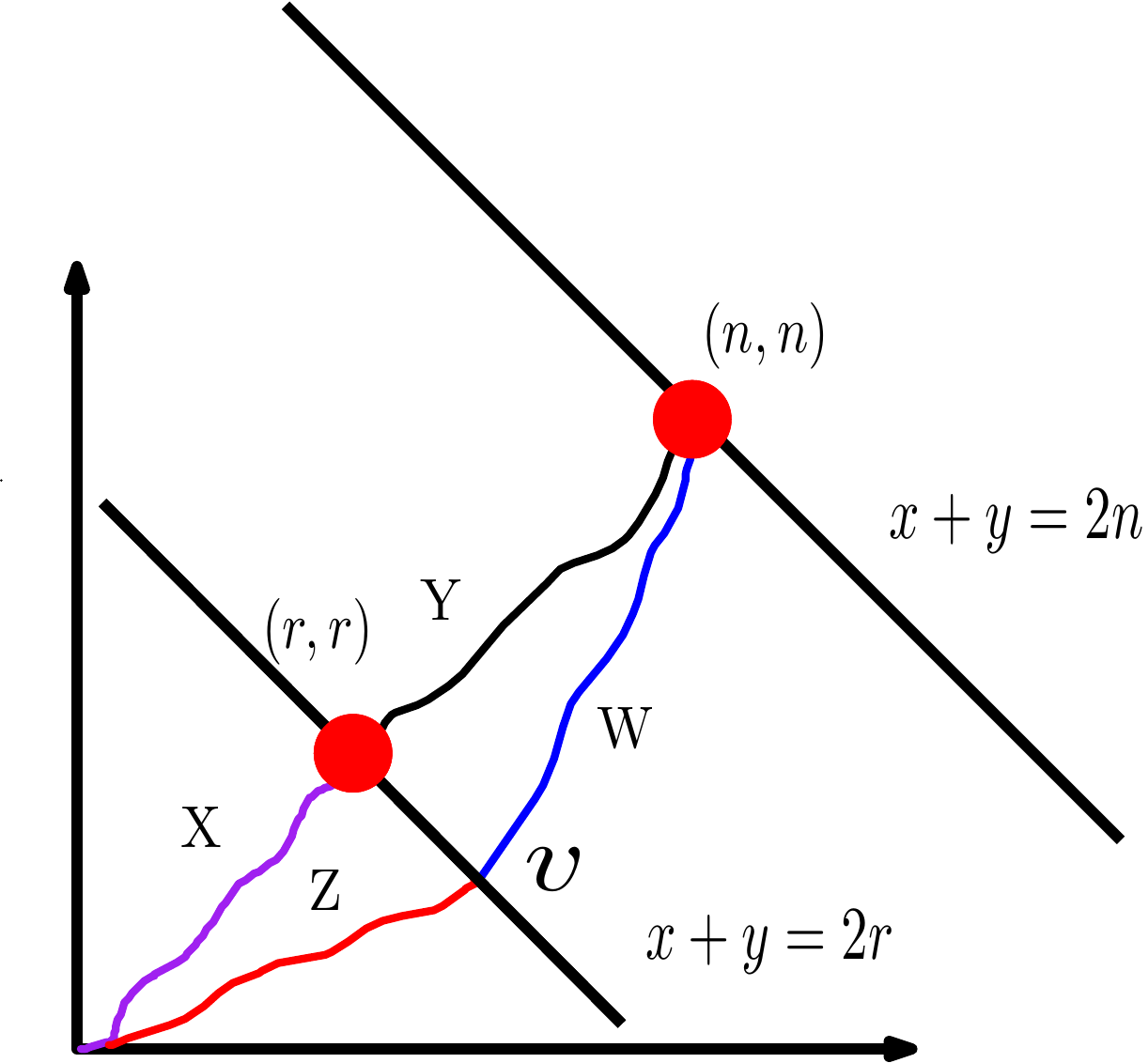}\par 
 \includegraphics[width=.40\textwidth]{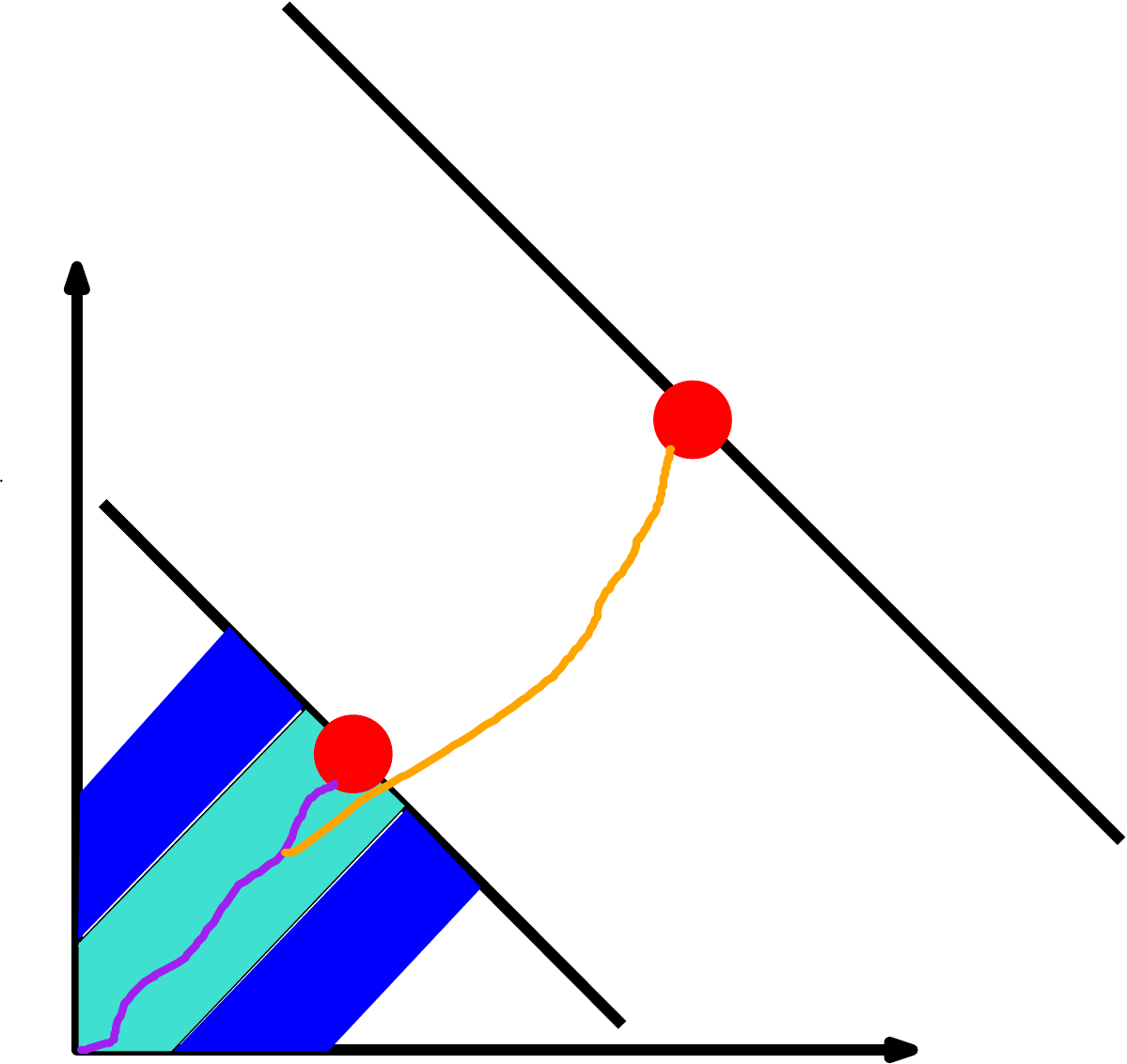}  
     \end{multicols}
%
\caption{The figure illustrates the polymers of interest, $\Gamma_{r}$ with weight $X$, $\Gamma_{\bf{r},\bf{n}},$ with weight $Y,$ $\Gamma_{n}$ comprised of $\Gamma_{\bf{0},v}$ and $\Gamma_{v,\bf{n}}$ with weights $Z$ and $W$ respectively where $v$ is the point of intersection of $\Gamma_n$ with the line $x+y=2r.$  The second figure illustrates our strategy to create barriers (deep blue) around a narrow strip (light blue) to ensure that $\Gamma_r$ and $\Gamma_{\bf{r},v}$ stay localized inside the latter and hence overlaps significantly creating a situation where the covariance between $X$ and $Z+W$ is approximated by the variance of the former.}
\label{fig.fluc0}
\end{figure*}

We now briefly describe how to use the exact same strategy to bound $\rho(r,n)$ in the regime $n-r\ll n.$ We will discuss the more delicate Theorem \ref{t:large}.  Note that in this case we are aiming to prove a lower bound on $1-\rho(n,r)$ and hence an upper bound on $\rho(n,r).$  Thus the natural strategy to adopt would be to show that even after conditioning on $T_r$, $T_n$ is not completely determined and there is still some fluctuation left. In fact, as expected, our arguments  will show that the latter is of the same order as the fluctuation of $T_{{\bf r}, {\bf n}}$ i.e., ${\rm{Var}}(T_n\vert T_r)=\Theta((n-r)^{2/3})$ {on a positive measure part of the space.} 
Thus we get 
$$\Theta((n-r)^{2/3})\le \inf_{\lambda}{\rm Var}(T_n-\lambda T_r)= 
 (1-\mbox{Corr}^2(T_r,T_n)){\rm {Var}}(T_n).$$
This, along with the fact that ${\rm {Var}}(T_n)=\Theta(n^{2/3})$, completes the proof. 

{Before moving further, we wish to point out that, while we do crucially make use of the integrability of the exponential LPP model, we only use the integrable input of weak convergence to Tracy-Widom distribution \cite{Jo99} and the moderate deviation estimates coming from \cite{BFP12} (see Theorem \ref{t:moddev}). Therefore we expect our methods to be applicable to a large class of integrable LPP models where such estimates are known. In particular, we do not use any information about the limiting Airy process. As already mentioned, our approach hinges on fine understanding of the local polymer geometry, following the sequence of recent works \cite{BSS14, BSS17B, BGH17}. Indeed extensively draw from some of the estimates derived in those previous works, while introducing some new elements to better the understanding of polymer geometry. By virtue of being geometric, our proof is also robust, i.e., we expect to be able use it to control correlation between polymer weights with endpoints near the polymers considered in this paper.}

\subsection*{Organization of the paper}
The rest of the paper is organized as follows. We first prove Theorem \ref{t:aest} in Section \ref{s:a}. Then we use Theorem \ref{t:aest} to prove Theorem \ref{t:small} in Section \ref{s:easy}. The strategy of proof for Theorem \ref{t:large} is described in Section \ref{s:dis}, where the proof is completed modulo the key Proposition \ref{p:positive}. Proposition \ref{p:positive} is proved in Section \ref{s:positive} using Lemma \ref{geodimp} and Lemma \ref{l:realvar} whose proofs are postponed to Section \ref{s:l2} and Section \ref{s:var} respectively. Throughout the paper we use a number of auxiliary results about fluctuations of polymer lengths between pairs of points in an on-scale rectangle, which were derived in \cite{BSS14}. For easy reference purpose we collect these results in Appendix \ref{s:appa}. Some consequences of these estimates which are also useful for us are recorded in Appendix \ref{s:appb}.

\subsection*{Acknowledgements}
We thank Alan Hammond and Ivan Corwin for discussing the results in \cite{CH14+}, and Alan Hammond for extensive discussions around the results of \cite{H16} and Theorem \ref{t:aest}. RB's research is partially supported by a Ramanujan Fellowship from Govt.\ of India and an ICTS-Simons Junior Faculty Fellowship from Simons Foundation. SG's research is partially supported by a Miller Research Fellowship.  

\section{Local fluctuations of weight profile}
\label{s:a}
We prove Theorem \ref{t:aest} in this section. As explained before, if $s=\Theta(n^{2/3})$ one can read off a qualitative version of this result from the limiting Airy process. However if $s\ll n^{2/3}$ the local fluctuation information disappears in the Airy process limit. Although that Brownian motion arises as a week limit at some shorter scale is known \cite{P17}, we need some finer estimates for finite $n$. This was achieved in \cite{H16} using the Brownian Gibbs property of the pre-limiting line ensemble in Brownian LPP. We shall take a more geometric approach which hinges on establishing that the profile $\{L_{n,s'}-L_{n,0}: |s'|<s\}$ is with high probability determined by the vertex weights in the region 
$$\{(x,y): 2n-C_{*}s^{3/2} \leq (x+y)\leq 2n\}$$
for some large constant $C_*$. To this end we have the following proposition. 

\begin{proposition}
\label{p:car}
In the set-up of Theorem \ref{t:aest}, consider $\Gamma=\Gamma_{\mathbf{0}, (n+s',n-s')}$. Let $t\geq 1$ and let $v=(v_1(s',t,s),v_2(s',t,s))$ denote the point at with $\Gamma$ intersects the anti-diagonal $x+y=2n-2t^*s^{3/2}$. There exists $s_0>0,y_0>0$ and $c,C>0$ such that the following holds for all $s>s_0$, $t\geq 1, y>y_0$ and for all $n> Cr^{3/2}$:
$$\P\left (\sup_{|s'|<s} |v_1(s',t,s)-(n-t^{*}s^{3/2})|\geq y t^{2/3}s \right) \leq e^{-cy^2}$$
where $t^{*}=\min\{t, \frac{n}{s^{3/2}}\}$. 
\end{proposition}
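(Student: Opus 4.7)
My plan is to combine two ingredients: (i) a non-crossing argument for geodesics sharing the starting vertex $\mathbf{0}$, which reduces the supremum over $|s'| < s$ to control of the two extreme polymers with endpoints $(n \pm s, n \mp s)$; and (ii) a standard point-to-point transversal fluctuation estimate at the intermediate anti-diagonal $L = \{x+y = 2n - 2t^* s^{3/2}\}$.

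First, I would invoke the almost sure uniqueness of geodesics together with the well-known ordering property that two geodesics from a common starting point coincide up to a last common vertex and then branch into disjoint suffixes which stay on their respective sides. This makes $s' \mapsto v_1(s', t, s)$ monotone non-decreasing in $s'$, so every $v_1(s', t, s)$ with $|s'| < s$ is sandwiched between $v_1(-s, t, s)$ and $v_1(s, t, s)$. A union bound then reduces the statement to upper-tail bounds for these two extreme geodesics.

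For the extreme polymer $\Gamma_{\mathbf{0}, (n+s, n-s)}$, the straight line from $\mathbf{0}$ to $(n+s, n-s)$ meets $L$ at $x_0 = (n+s)(1 - t^* s^{3/2}/n)$, which differs from the diagonal reference point $n - t^* s^{3/2}$ by the deterministic offset $s(1 - t^* s^{3/2}/n) \leq s$. This offset is absorbed into $y t^{2/3} s$ once $y$ is bounded below, so it suffices to prove the standard transversal-fluctuation estimate $\P(|v_1(s,t,s) - x_0| > y t^{2/3} s / 2) \leq C e^{-cy^2}$ at the natural KPZ scale
\[
\sigma \; = \; \Bigl(\frac{(n - t^* s^{3/2}) \, t^* s^{3/2}}{n}\Bigr)^{2/3} \; \leq \; (t^*)^{2/3} s \; \leq \; t^{2/3} s.
\]
Such point-deflection estimates are a well-established consequence of the moderate-deviation bounds on point-to-point passage times (Theorem \ref{t:moddev}): a transversal deflection of $z\sigma$ forces a mean loss in the concatenated passage times of order $z^2$ times the longitudinal fluctuation scale, which the Tracy--Widom upper and lower tails cannot absorb. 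In fact this yields the sharper cubic-exponent tail $e^{-cz^3}$, more than enough for the claimed Gaussian tail; the precise statements (drawn from \cite{BSS14}) are collected in the appendices.

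The principal technical work will be verifying the tail calibration uniformly across the three regimes produced by the cap $t^* = \min(t, n/s^{3/2})$: the deep-interior regime $t s^{3/2} \ll n$, the near-origin regime $t s^{3/2} \sim n$, and the saturated regime $t s^{3/2} > n$ (where the bound $y t^{2/3} s$ is looser than $y t^{*2/3} s$ and is therefore easier to verify). In each case one checks that $y t^{2/3} s$ dominates both the $O(s)$ deterministic offset and the natural transversal scale $\sigma$, so the claimed Gaussian-exponent bound survives; the monotonicity reduction of the first step then closes the proof.
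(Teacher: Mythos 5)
Your proposal is correct and follows essentially the same route as the paper: first reduce, via the planar ordering/tree structure of geodesics from a common source, to the two extreme polymers $\Gamma_{\mathbf{0},(n\pm s,n\mp s)}$, and then invoke the intermediate-cut transversal fluctuation estimate (which the paper imports directly from Theorem~2 and Remark~1.5 of \cite{BSS17B}, and which you re-derive in sketch from the moderate deviation bound). One small point you could sharpen: in the saturated regime $t s^{3/2}>n$ the bound is not merely ``looser''—the line $x+y=2n-2t^*s^{3/2}$ collapses to $x+y=0$, so $v=\mathbf{0}$ deterministically and the event in question is empty, which is the sense in which the paper calls this case trivial.
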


See Figure \ref{fig.fluc1} for an illustration of the large probability event in Proposition \ref{p:car}.

\begin{figure}[htbp!]
\includegraphics[width=.60\textwidth]{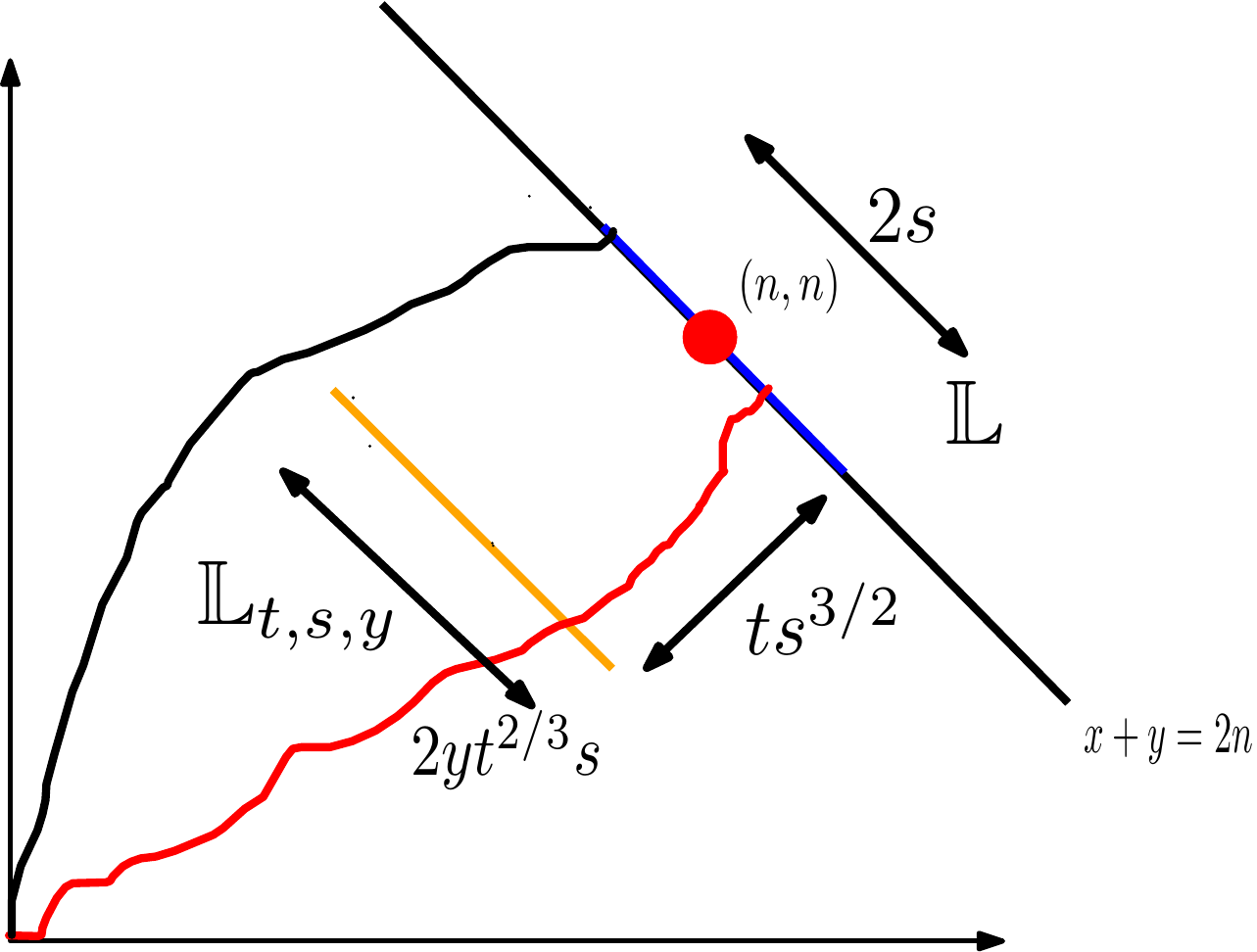} 
\caption{The figure illustrates the setting of Proposition \ref{p:car} where the geodesic from $\bf{0}$ to any point in an interval $\mathbb{L}$ of length $2s$ centered at $\mathbf{n}$ on the line $x+y=2n$ is unlikely to intersect the line $x+y=2(n-ts^{3/2})$ outside the line segment $\mathbb{L}_{t,s,y}$ of length $2yt^{2/3}s$ centered at $(n-ts^{3/2},n-ts^{3/2}).$ Thus it is unlikely that the black path would be a geodesic.  }
\label{fig.fluc1}
\end{figure}

\begin{proof}
Clearly, the case $t^*\neq t$ is trivial. For the other case, observe first that by polymer ordering, it suffices to prove the result for $s'=\pm r$. This case can be read off from the proof of Theorem 2 in \cite{BSS17B}(see also Remark 1.5 there).
\end{proof}

As in Figure \ref{fig.fluc1}, let $\mathbb{L}$ denote the line segment joining $(n+s,n-s)$ and $(n-s,n+s)$ and $\mathbb{L}_{t,s,y}$ denote the line segment joining $(n-t^*s^{3/2}-yt^{2/3}s, n-t^*s^{3/2}+yt^{2/3}s)$ and $(n-t^*s^{3/2}+yt^{2/3}s, n-t^*s^{3/2}-yt^{2/3}s)$ . Clearly on the large probability event (for large $y$) given by Proposition \ref{p:car}, the profile: $$\{L_{n,s'}-L_{n,0}: |s'|<s\}$$ can be upper bounded by using the passage times $T_{u,v}$ where $u\in \mathbb{L}_{t,s,y}$ and $v\in \mathbb{L}$. The next proposition shows that these passage times are concentrated around their expectations with sufficiently high probability.

\begin{proposition}
\label{p:sidetoside}
Let $\delta\in (0,\frac{1}{3})$ be fixed. Set $\mathbb{L}':=\mathbb{L}_{t,s,t^{\delta}}$. Then there exists $s_0,y_0>0$ and $c>0$ such that for all $s>s_0, y>y_0$ and $t\geq 1$ we have 
$$ \P\left( {\sup_{u\in \mathbb{L}', v\in \mathbb{L}} |T_{u,v}-\E T_{u,v}| \geq yt^{1/3+\delta/2}s^{1/2}} \right) \leq e^{-cy}.$$
The same bound holds for $u=\mathbf{0}$ if $t\neq t^*$.
\end{proposition}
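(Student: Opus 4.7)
The plan is to reduce the two-parameter supremum to a single-pair moderate-deviation estimate combined with a modulus-of-continuity bound for the passage-time field. Write $n' := t^{*}s^{3/2}$; the natural standard deviation of $T_{u,v}$ is of order $(n')^{1/3} = t^{1/3}s^{1/2}$, so the target deviation $y t^{1/3+\delta/2} s^{1/2}$ is $y t^{\delta/2}$ standard deviations. Note that $\mathbb{L}'$ has transversal width $2t^{\delta}(n')^{2/3}$ while $\mathbb{L}$ has width $2s \leq (n')^{2/3}$ (since $t\geq 1$), so $\mathbb{L}$ fits inside a single on-scale transversal unit and $\mathbb{L}'$ is longer by a factor $t^{\delta}$.

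First I would place a net $\{u_i\}_{i=1}^{M} \subset \mathbb{L}'$ with $M \asymp t^{\delta}$ points at transversal spacing $(n')^{2/3}$, and take a single reference point $v^{*} = (n,n) \in \mathbb{L}$. By Theorem \ref{t:moddev},
\[
\P\big(|T_{u_i, v^{*}} - \E T_{u_i, v^{*}}| \geq \tfrac{1}{3} y t^{1/3+\delta/2} s^{1/2}\big) \leq e^{-c(yt^{\delta/2})^{3/2}} = e^{-c y^{3/2} t^{3\delta/4}},
\]
and a union bound over the $M \asymp t^{\delta}$ net points is at most $t^{\delta} e^{-c y^{3/2} t^{3\delta/4}} \leq e^{-c' y}$ for $y \geq y_0$ and $t \geq 1$, since $y^{3/2} t^{3\delta/4}$ dominates $y + \delta \log t$ in that range.

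Next I would bound, within each cell, (i) the random modulus $\sup_{u,v} |T_{u,v} - T_{u_i, v^{*}}|$ over $u$ in the cell around $u_i$ and $v \in \mathbb{L}$, and (ii) the deterministic mean variation $\sup_{u,v} |\E T_{u,v} - \E T_{u_i, v^{*}}|$. The latter follows from the standard asymptotic $\E T_{u,v} \approx 4n' - (\text{anti-diagonal offset})^{2}/n'$: within an on-scale cell the mean variation is $O((n')^{1/3}) = O(t^{1/3}s^{1/2})$, and is dominated by $\tfrac{1}{3} y t^{1/3+\delta/2} s^{1/2}$ once $y$ exceeds an absolute constant $y_0$. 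Term (i) is the content of the ``tree-sup''-type estimates from Appendix \ref{s:appb} (cf.\ Proposition \ref{t:treesup}), which yield stretched-exponential tail bounds for the passage-time modulus over an on-scale box; a final union bound over the $M$ cells together with the triangle inequality assembles the three contributions into the claimed bound.

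The main technical obstacle is step (i): one needs the modulus of continuity of $T$ over an on-scale cell to have tail decay strong enough to absorb the union bound over $t^{\delta}$ cells uniformly in $s$ and $t$. This is precisely what the sharp estimates for on-scale rectangles collected in Appendix \ref{s:appb} provide, by combining one-point moderate deviations with chaining/monotonicity arguments. The $u = \mathbf{0}$ variant (when $t^{*} \neq t$, so that $\mathbb{L}'$ degenerates to the origin) is strictly easier: $u$ is fixed, only the modulus in $v \in \mathbb{L}$ and the single-pair moderate deviation remain, and the same triangle-inequality argument applies without the net in $\mathbb{L}'$.
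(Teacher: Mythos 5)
Your argument is correct and is essentially the paper's one-line proof unpacked: the paper just invokes Propositions \ref{t:treeinf} and \ref{t:treesup} (noting $\delta<1/3$ and $s$ large keep all relevant slopes in $(1/2,2)$), and those propositions already encode exactly the net-over-$\mathbb{L}'$, fixed $v^*$, modulus-of-continuity-per-cell, deterministic-mean-comparison, and union-bound steps you spell out. One small slip worth fixing: Theorem \ref{t:moddev} gives an $e^{-ct}$ tail, not $e^{-ct^{3/2}}$, so the one-point estimate should be $e^{-c\,yt^{\delta/2}}$ rather than $e^{-c\,y^{3/2}t^{3\delta/4}}$; this does not affect the union bound since $yt^{\delta/2}$ still dominates $y+\delta\log t$ for $t\ge 1$, $y\ge y_0$.
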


\begin{proof}
This follows from Proposition \ref{t:treeinf} and Proposition \ref{t:treesup} by observing that the slope between any two pair of points in $\mathbb{L}$ and $\mathbb{L}'$ remain between $1/2$ and $2$ ({here we use the fact that $\delta< \frac{1}{3}$, and $s$ is sufficiently large}).
\end{proof}

The next lemma gives a comparison of $\E T_{u,v}$ and $\E T_{u,v'}$ as $u$ is kept fixed on $\mathbb{L}'$ and $v$ varies on $\mathbb{L}$. The proof is an easy computation using Theorem \ref{t:moddev} and is omitted. 

\begin{lemma}
\label{l:mean}
In the set-up of Proposition \ref{p:sidetoside}, there exists $C_0>0$ such that for all $t>1$ we have 
$\sup_{u\in \mathbb{L}', v,v'\in \mathbb{L}} \E X_{u,v}-\E X_{u,v'}\leq C_0s^{1/2}$. The same conclusion remains for $u=\mathbf{0}$, if $t\neq t^*$.
\end{lemma}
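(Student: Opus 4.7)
The plan is to reduce the lemma to an elementary calculus computation by combining Theorem \ref{t:moddev} with the leading asymptotic $\E T_{u,v} \approx (\sqrt{p}+\sqrt{q})^2$ for $(p,q) := v-u$. Parametrize $v-u = (M+k, M-k)$ and $v'-u = (M+k', M-k')$ with $M := t^{\ast}s^{3/2}$. The constraints $u \in \mathbb{L}'$ and $v,v' \in \mathbb{L}$ translate into $|k|,|k'| \leq K := (t^{2/3+\delta}+1)s$ and $|k-k'| \leq 2s$, and since $\delta<1/3$ and $s$ is large one directly checks $K^2 \leq M^2/2$, so $\sqrt{M^2-k^2}$ is smooth and non-degenerate on the relevant range.

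The first step, carrying the main content, is to bound the leading-order variation. Writing $(\sqrt{M+k}+\sqrt{M-k})^2 = 2M + 2\sqrt{M^2-k^2}$, the mean value theorem yields
\[
  \Bigl| \sqrt{M^2-k^2} - \sqrt{M^2-k'^2} \Bigr| \;\leq\; \frac{|k^2-k'^2|}{2\sqrt{M^2-K^2}} \;\leq\; \frac{|k+k'|\,|k-k'|}{\sqrt{2}\,M} \;\leq\; \frac{4Ks}{\sqrt{2}\,M}.
\]
Substituting $K \leq 2t^{2/3+\delta}s$ and (in the case $t^{\ast}=t$) $M = ts^{3/2}$ gives a bound of order $t^{\delta - 1/3} s^{1/2}$, which is at most a constant times $s^{1/2}$ because $\delta<1/3$ and $t \geq 1$. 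This is precisely the required $O(s^{1/2})$ bound for the leading-order contribution.

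The second step is to control the variation of the error term $R(M,k) := \E T_{u,v} - (\sqrt{M+k}+\sqrt{M-k})^2$ supplied by Theorem \ref{t:moddev}. The main obstacle is that $|R(M,k)|$ itself is only $O(\sigma(p,q)) = O(M^{1/3})$, which can be much larger than $s^{1/2}$ when $t$ is large, so a crude triangle inequality between $R(M,k)$ and $R(M,k')$ is insufficient: one has to extract from Theorem \ref{t:moddev} a comparison of means at \emph{nearby} endpoints, not merely a uniform size bound. The fluctuation scale $\sigma$ satisfies $\sigma^3 \asymp (M^2-k^2)/M$, and the analogous MVT computation shows $|\sigma(k) - \sigma(k')| = O(t^{\delta-1} s^{-1/2}) = O(1)$, indicating that the correction is essentially Lipschitz in $k$ with a very small constant; integrating the matching upper- and lower-tail bounds from \cite{BFP12} at nearby displacements makes the required comparison of means rigorous. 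The supplementary case $u=\mathbf{0}$ with $t \neq t^{\ast}$ is easier: there $M = n$ and $|k|, |k'| \leq s$, so the leading variation is already $O(s^2/n) = o(s^{1/2})$ under the hypothesis $n > Cs^{3/2}$, and the error-term step is even cruder.
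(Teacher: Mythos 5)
Your leading-order computation is correct and matches what the authors presumably mean by ``an easy computation using Theorem~\ref{t:moddev}'': writing $v-u=(M+k,M-k)$, the main term supplied by Theorem~\ref{t:moddev} is $2M+2\sqrt{M^2-k^2}$, and your algebra bounding its variation over the relevant range by $O(t^{\delta-1/3}s^{1/2})\leq O(s^{1/2})$ (and by $O(s^2/n)=O(s^{1/2})$ in the $u=\mathbf{0}$ case) is right. The gap is in your second step, and you have in fact half-noticed it yourself. Theorem~\ref{t:moddev} gives only the uniform bound $|R(M,k)|=O(M^{1/3})=O\bigl((t^*)^{1/3}s^{1/2}\bigr)$ on each individual correction; ``integrating the matching upper- and lower-tail bounds at nearby displacements'' does not yield anything finer, because the theorem is completely silent about \emph{where} inside the window of width $\Theta(M^{1/3})$ the mean actually sits, and a priori it could move by $\Theta(M^{1/3})$ as $k$ shifts by $O(s)$. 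The near-Lipschitzness of the fluctuation-scale prefactor that you compute would only be relevant if you already had a two-term expansion $\E T_{u,v}=(\text{main term})+c\cdot(\text{scale})+o(s^{1/2})$ with a fixed constant $c$, and Theorem~\ref{t:moddev} asserts no such thing. So step two, as written, does not establish the $t$-uniform bound $C_0 s^{1/2}$.

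The honest conclusion of the ``easy computation'' is the weaker bound $O\bigl((t^*)^{1/3}s^{1/2}\bigr)$ --- main-term variation plus the crude triangle inequality on the correction terms --- and that is all that is actually used. In the proof of Theorem~\ref{t:aest} the lemma is invoked with $t=z^{4/3}$ and compared against a fluctuation scale $\asymp z\,s^{1/2}$ coming from Proposition~\ref{p:sidetoside}; a mean-difference bound of order $(t^*)^{1/3}s^{1/2}\leq t^{1/3}s^{1/2}=z^{4/9}s^{1/2}$ is dominated by $z\,s^{1/2}$, so the argument goes through unchanged. You should therefore either replace the claimed $C_0 s^{1/2}$ by $C_0(t^*)^{1/3}s^{1/2}$ (which is what Theorem~\ref{t:moddev} actually gives and which suffices for Theorem~\ref{t:aest}), or bring in a genuinely sharper asymptotic for the mean passage time with an explicit, continuously-varying $n^{1/3}$-order term; the hand-wave ending your second step is not a substitute for either.
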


We can now complete the proof of Theorem \ref{t:aest}. 

\begin{proof}[Proof of Theorem \ref{t:aest}]
Let $z>0$ sufficiently large be fixed. Let $t=z^{4/3}$ and let $\mathcal{A}$ denote the event that 
$$\left \{\sup_{s': |s'|<s} |v_1(s',t, s)-(n-t^*s^{3/2})|\geq z^{10/9}s\right\}$$
Use Proposition \ref{p:car} with the above value of $t$ and $y=z^{2/9}$ to conclude that $\P(\mathcal{A})\leq e^{-cz^{4/9}}$ (as $z$ is sufficiently large).
We shall now consider two cases separately: (i) $t=t^*$ and (ii) $t\neq t^*$.

In case (i), let $\mathbb{L}'$ be defined as in Proposition \ref{p:sidetoside} with $\delta=1/6$ and the choice of $t$ as before. Let $\mathcal{B}$ denote the event that 
$$\left\{ {\sup_{u\in \mathbb{L}', v, v'\in \mathbb{L}} |T_{u,v}-T_{u,v'}| \geq zs^{1/2}} \right\}.$$
Using Proposition \ref{p:sidetoside} with the choices above, $y=z^{4/9}$ and Lemma \ref{l:mean} we get that for $z$ sufficiently large, we have $\P(\mathcal{B})\leq e^{-cz^{4/9}}$. It remains to prove that on $\mathcal{A}^{c} \cap \mathcal{B}^c$ we have 
$$\left\{\sup_{s': |s'|<s} L_{n,s'}-L_{n,0} \leq  zs^{1/2}\right\}.$$

To see this, let $s_*$ with $|s_*|\leq s$ be such that 
$$L_{n, s_*}=\sup_{s': |s'|<s} L_{n,s'}.$$
Let $v:= (n+s_*, n-s_*)$. On $\mathcal{A}^c$, the geodesic $\Gamma_{\mathbf{0}, (n+s_*,n-s_*)}$ intersects the line segment $\mathbb{L}'$, let $u_*$ be the intersection point. On $\mathcal{B}^c$ we have $|T_{u_*,v}-T_{u_*, \mathbf{n}}|\leq zs^{1/2}$. The claim is established by observing that $L_{n,0}\geq T_{\mathbf{0}, u_*}+ T_{u_*, \mathbf{n}}$.

In case (ii), we proceed as before but now notice that $u_*=\mathbf{0}$. The same argument now can be repeated with $\mathcal{B}'$ defined as 
$$\left\{ {\sup_{v, v'\in \mathbb{L}} |T_{\mathbf{0},v}-T_{\mathbf{0},v'}| \geq zs^{1/2}} \right\};$$
and using the second parts of Lemma \ref{l:mean} and Proposition \ref{p:sidetoside} to show that $\P(\mathcal{B}')\leq e^{-cz^{4/9}}$. The rest of the proof is identical with the previous case.
\end{proof}

\section{Proof of upper bounds}
\label{s:easy}

In this section we shall prove Theorem \ref{t:small} using Theorem \ref{t:aest} and Proposition \ref{p:car}. As we shall see, the proofs of parts (i) and (ii) will depend on much of the same ingredients. Before proceeding further let us introduce some notation that will be used throughout this section.

Before diving in to the proofs we adopt the convention of ignoring the values of the vertices $\{\omega_{(x,y)}: x+y=2r\}$. This would enable us to write cleaner equations of the form $T_{\bf{n}}=T_{{\bf 0}, v}+ T_{v,{\bf n}}$ where $v$ is the unique vertex $\Gamma_n \cap \{x+y=2r\}.$
However since by definition, the random $v$ can be one of $2r$ possible vertices, whose maximum value is no more than $\log r$ with exponential tail, it does not create any change in the computations throughout the paper since all the objects that we deal with, have  fluctuations of the order of $r^{1/3}$. We shall adopt this convention throughout the remainder of this paper, and not comment further on this topic. It will be easy to verify the minor details in each case, and we leave that to the reader.

For any path $\gamma$, we shall denote by $\ell(\gamma)$, the weight of the path. Let $\Gamma:= \Gamma_{n}$ denote the polymer from $\mathbf{0}$ to $\mathbf{n}$. Let $v=(v_1,v_2)$ denote the point at which $\Gamma$ intersects the line $\{x+y=2r\}$. Recall from Section \ref{first} that $T_{r}:=T_{\mathbf{0}, \mathbf{r}}$.
Let us define (see Figure \ref{fig.fluc0}) 
\begin{align}\label{labels}
X&:=T_{r},\,\,\,
Y:=T_{\mathbf{r},\mathbf{n}}, \\ 
\nonumber
Z&:=\ell(\Gamma_{\mathbf{0},v}) \text{ and }  W:=\ell(\Gamma_{v,\mathbf{n}}). 
\end{align}
Thus by definition $T_{n}=Z+W$ \footnote{This is first of the many situations we ignore the weights on the line $x+y=2r$, as mentioned above we shall not comment on this issue henceforth.}. Finally we shall denote by $X^*$ the weight of the polymer, denoted by $\Gamma^*$, from $\mathbf{0}$ to the line $\{x+y=2r\}$. 

We shall need some preparatory results. First we want to show that $(Z-X)_{+}$ is tight at scale $r^{1/3}$. Observing that $X^*\geq Z$ this is a consequence of Lemma \ref{l:p2l}.
The next lemma shall show that $W-Y$ is also typically of order $r^{1/3}$. Notice that if $r\ll n$, now we can no-longer replace $W$ by the weight of the line-to-point polymer from the line $\{x+y=2r\}$ to $\mathbf{n}$. This is where we shall need the full power of Proposition \ref{p:car} and Theorem \ref{t:aest}. 

\begin{lemma}
\label{l:yw}
There exists positive constants $r_0,y_0$ and $C,c>0$ such that for all $r>r_0$ and $y>y_0$ and $n>Cr$ we have
$$\P(W-Y>yr^{1/3})\leq e^{-cy^{1/3}}.$$
\end{lemma}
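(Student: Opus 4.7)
Strategy. The quantity $W - Y = T_{v,\mathbf{n}} - T_{\mathbf{r},\mathbf{n}}$ measures how the passage time to $\mathbf{n}$ changes as the starting point moves along the antidiagonal $\{x+y=2r\}$ from $\mathbf{r}$ to the random intersection point $v = \Gamma_n \cap \{x+y=2r\}$. My plan is to (i) localize $v$ to a window of transverse size $O(r^{2/3})$ around $\mathbf{r}$, and (ii) bound the oscillation of $w \mapsto T_{w,\mathbf{n}}$ over that window using Theorem \ref{t:aest} applied to the reversed last-passage problem viewed from $\mathbf{n}$.

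Localization and reduction via reversal. For the first step, since $r \ll n$, transversal fluctuation estimates---namely Proposition \ref{p:car} applied with $s'=0$ and parameters chosen so that the cut-line is $\{x+y=2r\}$, or the moderate-deviation estimates collected in Appendix \ref{s:appa}---give, for some $\beta>0$,
$$\P\bigl(|v_1 - r| > A r^{2/3}\bigr) \leq e^{-c A^{\beta}}, \qquad A \geq 1,$$
provided $n>Cr$ with $C$ sufficiently large. On the complementary event,
$$W - Y \leq \sup_{|s^*| \leq A r^{2/3}} \bigl(T_{(r+s^*, r-s^*),\mathbf{n}} - T_{\mathbf{r},\mathbf{n}}\bigr).$$
Now consider the reversed field $\tilde{\omega}_{(x,y)} := \omega_{\mathbf{n} - (x,y)}$, which is again i.i.d.\ exponential. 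The passage time $T_{(r+s^*,r-s^*),\mathbf{n}}$ in $\omega$ equals $\tilde{L}_{n-r,-s^*}$ in the notation of Theorem \ref{t:aest}, while $T_{\mathbf{r},\mathbf{n}} = \tilde{L}_{n-r,0}$. By the symmetry of the supremum range under $s^*\mapsto -s^*$, the supremum above has the same law as $\sup_{|s'|\leq A r^{2/3}}(L_{n-r,s'}-L_{n-r,0})$. Applying Theorem \ref{t:aest} with $n \mapsto n-r$ and $s \mapsto A r^{2/3}$---whose regularity hypothesis $n-r > C(A r^{2/3})^{3/2} = C A^{3/2} r$ holds when $n>Cr$ for $C$ large relative to $A$---therefore yields
$$\P\Bigl(\sup_{|s^*|\leq A r^{2/3}}\bigl(T_{(r+s^*,r-s^*),\mathbf{n}} - T_{\mathbf{r},\mathbf{n}}\bigr) \geq z A^{1/2} r^{1/3}\Bigr) \leq e^{-c z^{4/9}}.$$

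Parameter balancing. Setting $z A^{1/2} = y$ converts the threshold into $y r^{1/3}$, so combining the two displays gives
$$\P(W - Y > y r^{1/3}) \leq e^{-c A^{\beta}} + e^{-c (y/A^{1/2})^{4/9}}.$$
Choosing $A \asymp y^{1/2}$ equilibrates the two contributions: $(y/A^{1/2})^{4/9} = y^{(3/4)\cdot(4/9)} = y^{1/3}$, while $A^{\beta}=y^{\beta/2}\geq y^{1/3}$ as soon as $\beta \geq 2/3$, which is comfortably satisfied by the available transversal fluctuation tails. The main obstacle is this parameter balance together with the regime of very large $y$, in which the fixed constraint $n>Cr$ eventually prevents $A$ from growing beyond a constant; there the quadratic decrease $\E T_{v,\mathbf{n}} - \E T_{\mathbf{r},\mathbf{n}} \asymp -(v_1-r)^2/(n-r)$, read off from the shape theorem and Theorem \ref{t:moddev}, is already so negative that the event $\{W-Y>yr^{1/3}\}$ forces an atypical moderate deviation of $T_{v,\mathbf{n}}$ from its mean, which is controlled directly via Theorem \ref{t:moddev} together with a union bound over $v$.
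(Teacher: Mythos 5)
Your proof is correct and takes essentially the same route as the paper: split the event according to whether the intersection point $v$ of $\Gamma_n$ with $\{x+y=2r\}$ lies within a window of width $A r^{2/3}$ of $\mathbf{r}$, bound the localization failure via transversal-fluctuation estimates (Proposition~\ref{p:car}/Theorem~\ref{t:tf}), bound the supremum of $T_{w,\mathbf{n}}-T_{\mathbf{r},\mathbf{n}}$ over that window via Theorem~\ref{t:aest} applied to the reversed ensemble centered at $\mathbf{n}$, and then balance the two tails. The only difference is the window-size calibration: the paper takes the window to be $y^{1/6}r^{2/3}$ (making the localization tail binding and giving $e^{-cy^{1/3}}+e^{-cy^{11/27}}$), while you take $A\asymp y^{1/2}$ (making the Theorem~\ref{t:aest} tail binding and giving $e^{-cy}+e^{-cy^{1/3}}$); any choice in the range $[y^{1/6},y^{1/2}]$ produces $e^{-cy^{1/3}}$, so both are fine. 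Your closing caveat about the regime where the regularity hypothesis $n-r>C(Ar^{2/3})^{3/2}$ fails is apt — this degenerate case is also implicitly present in the paper's application (and is addressed there through case (ii) of Theorem~\ref{t:aest}'s proof and Remark~\ref{r:randn}); your proposed resolution via the curvature penalty $-(v_1-r)^2/(n-r)$ together with Theorem~\ref{t:moddev} is the standard way to handle it, though it is stated at sketch level.
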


\begin{proof}
For $z>0$, let $\mathcal{A}_{z}$ denote the event $|v_1-r|\geq zr^{2/3}$ and $\mathcal{B}_{z}$ denote the event that 
$$ \sup_{|s|\leq zr^{2/3}} T_{(r+s,r-s),\mathbf{n}} -T_{\mathbf{r},\mathbf{n}} \geq yr^{1/3}.$$
Clearly for every $z>0$,
$$\P(W-Y>yr^{1/3})\leq \P(\mathcal{A}_z)+\P(\mathcal{B}_{z}).$$
The lemma follows by taking $z=y^{1/6}$ and using Proposition \ref{p:car} and Theorem \ref{t:aest} to bound $\P(\mathcal{A}_z)$ and $\P(\mathcal{B}_{z})$ respectively. {Note that in the last application, Theorem \ref{t:aest} is applied for the inverted ensemble i.e., replace $\bf{n}$ by $\bf 0$ and $\bf r$ by $\bf n-r$} .
\end{proof}

We can now prove the following proposition which immediately implies Theorem \ref{t:small}, (i) as ${\rm {Var}} ~T_{n}= \Theta (n^{2/3})$, and ${\rm {Var}} ~T_{r}= \Theta (r^{2/3})$.

\begin{proposition}
\label{p:easyu}
There exists absolute constants $r_0, \delta_1$ and $C$ such that we have for all $r_0<r<\delta_1 n$ and $n$ sufficiently large 
$${\rm {Cov}} (T_{n}, T_{r}) \leq Cr^{2/3}.$$
\end{proposition}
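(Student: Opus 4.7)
\textbf{Proof plan for Proposition \ref{p:easyu}.} The plan is to decompose $T_n = Z + W$ and $X = T_r$ through the common intermediate antidiagonal $\{x+y=2r\}$, then exploit two structural facts: (a) the ``reference'' passage time $Y = T_{\mathbf{r},\mathbf{n}}$ is independent of $X$ because the two variables are functions of disjoint vertex weights, and (b) both $(Z - X)$ and $(W - Y)$ are small at the scale $r^{1/3}$ in $L^2$. Once these are in hand, Cauchy--Schwarz delivers the bound.

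First I would write
\begin{equation*}
\mathrm{Cov}(T_n, T_r) \;=\; \mathrm{Cov}(Z+W, X) \;=\; \mathrm{Var}(X) \;+\; \mathrm{Cov}(Z - X, X) \;+\; \mathrm{Cov}(W - Y, X),
\end{equation*}
using $\mathrm{Cov}(Y, X) = 0$. Since $\mathrm{Var}(X) = \Theta(r^{2/3})$ is already the desired order, the task reduces to showing $\mathrm{Var}(Z - X) = O(r^{2/3})$ and $\mathrm{Var}(W - Y) = O(r^{2/3})$, so that Cauchy--Schwarz combined with $\mathrm{Var}(X) = \Theta(r^{2/3})$ yields both remaining covariances as $O(r^{2/3})$.

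To obtain these variance bounds I would sandwich $Z - X$ and $W - Y$ by nonnegative quantities with good tails. The two structural inequalities are $Z \le X^*$ (since $Z$ is the weight of a path from $\mathbf{0}$ to the antidiagonal and $X^*$ is the maximum such weight) and the planar ordering $Z + W \ge X + Y$ (since $X + Y$ is the weight of a valid path from $\mathbf{0}$ to $\mathbf{n}$ through $\mathbf{r}$). Rearranging, these give
\begin{equation*}
-(W-Y)_+ \;\le\; Z - X \;\le\; X^* - X, \qquad -(X^*-X) \;\le\; W - Y ,
\end{equation*}
(the lower bound on $W-Y$ comes from $W - Y \ge X - Z \ge -(X^*-X)$), and therefore $|Z-X|$ and $|W-Y|$ are both pointwise dominated by $(X^* - X) + (W-Y)_+$. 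Then $\mathrm{Var}(Z-X) \le \mathbb{E}[(Z-X)^2] \le 2\,\mathbb{E}[(X^*-X)^2] + 2\,\mathbb{E}[(W-Y)_+^2]$, and similarly for $\mathrm{Var}(W-Y)$.

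It remains to show that $\mathbb{E}[(X^*-X)^2]$ and $\mathbb{E}[(W-Y)_+^2]$ are both $O(r^{2/3})$. For the first, Lemma \ref{l:p2l} (point-to-line comparison) gives stretched-exponential upper tails for $X^* - X$ at scale $r^{1/3}$; integrating the tail yields the $L^2$ bound. For the second, Lemma \ref{l:yw} provides the corresponding tail $\mathbb{P}((W-Y)_+ > y r^{1/3}) \le e^{-c y^{1/3}}$, again integrating to $O(r^{2/3})$. Plugging everything into the decomposition above completes the proof. The only step requiring care is the promotion of the tail estimates to second-moment bounds uniformly in $r$, which is routine given the stretched-exponential decay; no new geometric input beyond Lemma \ref{l:p2l}, Lemma \ref{l:yw}, and the independence of $Y$ from $X$ is needed.
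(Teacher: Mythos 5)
Your proposal is correct and rests on the same ingredients as the paper's proof: independence of $X$ and $Y$, the sandwich inequalities $Y-W \le Z-X \le X^*-X$ coming from polymer ordering, the $L^2$ bounds furnished by Lemmas \ref{l:p2l} and \ref{l:yw}, and Cauchy--Schwarz. The only difference is cosmetic — you expand $\mathrm{Cov}(Z+W,X)$ as $\mathrm{Var}(X)+\mathrm{Cov}(Z-X,X)+\mathrm{Cov}(W-Y,X)$, whereas the paper splits it as $\mathrm{Cov}(X,Z)+\mathrm{Cov}(X,W-Y)$ and bounds $\mathrm{Var}(Z)$ directly; both reduce to the identical second-moment estimates on $Z-X$ and $W-Y$.
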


\begin{proof}
We need to upper bound 
$${\rm {Cov}} (X, Z+W)= {\rm {Cov}}(X,Z)+ {\rm {Cov}}(X,W).$$
we bound the two terms separately. Clearly, by Cauchy-Schwarz inequality and the observation ${\rm {Var}}~ X= \Theta (r^{2/3})$, to prove ${\rm {Cov}}(X,Z)\leq Cr^{2/3}$, it suffices to show that ${\rm {Var}}~Z=O(r^{2/3})$. 
Now notice that, ${\rm {Var}}(Z) \leq 2({\rm {Var}} X + \E (X-Z)^2)$. Observing $ Y-W\leq Z-X \leq X^*-X $, and using Lemma \ref{l:p2l} and Lemma \ref{l:yw} it follows that $\E (X-Z)^2= O(r^{2/3})$ which in turn implies ${\rm {Cov}}(X,Z)\leq Cr^{2/3}$ for some absolute constant $C$.

For the second term in the above decomposition observe that 
$${\rm {Cov}}(X,W)={\rm {Cov}}(X,W-Y)$$
because $X$ and $Y$ are independent. Using Cauchy-Schwarz inequality again, it suffices to show that $\E (Y-W)^2=O(r^{2/3})$. Observing as before that $W-Y \geq X-Z \geq X-X^*$, this follows from Lemma \ref{l:p2l} and Lemma \ref{l:yw}. This completes the proof of the proposition.
\end{proof}

To prove Theorem \ref{t:small}, (ii) we shall need the following easy observation. 

\begin{observation}
\label{o:corr}
For any two random variables $U$ and $V$ we have 
$${\mathrm {Var}}(U-V)\geq (1-{\rm{Corr}}^2(U,V)){\mathrm {Var}}(U).$$
\end{observation}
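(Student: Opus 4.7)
The plan is to view the right-hand side as the optimal mean-squared-error one can achieve when predicting $U$ by an affine function of $V$, and then to observe that the specific choice of affine function $V \mapsto V$ is of course no better than the optimum. Explicitly, I will establish the identity
$$ \min_{\lambda \in \R} \mathrm{Var}(U-\lambda V) = (1-\mathrm{Corr}^2(U,V))\,\mathrm{Var}(U), $$
and then conclude by noting that $\lambda=1$ is one admissible choice, so $\mathrm{Var}(U-V) \geq \min_{\lambda} \mathrm{Var}(U-\lambda V)$.

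The first step is a routine one-variable calculus exercise. Writing
$$ f(\lambda) := \mathrm{Var}(U-\lambda V) = \mathrm{Var}(U) - 2\lambda\,\mathrm{Cov}(U,V) + \lambda^2\,\mathrm{Var}(V), $$
which is a convex quadratic in $\lambda$ (assuming $\mathrm{Var}(V)>0$; the degenerate case is trivial since then $V$ is a.s.\ constant and both sides reduce to $\mathrm{Var}(U)$), I set $f'(\lambda)=0$ to obtain $\lambda^{*} = \mathrm{Cov}(U,V)/\mathrm{Var}(V)$ and plug back in to get
$$ f(\lambda^{*}) = \mathrm{Var}(U) - \frac{\mathrm{Cov}(U,V)^2}{\mathrm{Var}(V)} = \Bigl(1-\mathrm{Corr}^2(U,V)\Bigr)\mathrm{Var}(U). $$

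Combining the two steps, $\mathrm{Var}(U-V)=f(1)\geq f(\lambda^{*})=(1-\mathrm{Corr}^2(U,V))\mathrm{Var}(U)$, which is the claimed inequality. There is no real obstacle here; the only thing worth noting is that this is the exact mechanism by which Section \ref{s:dis} will transfer a lower bound on the conditional variance $\mathrm{Var}(T_n\mid T_r)$ (which controls $\mathrm{Var}(T_n - \lambda T_r)$ for an appropriate $\lambda$, and hence $\mathrm{Var}(T_n - T_r)$) into an upper bound on $\mathrm{Corr}^2(T_n,T_r)$, i.e.\ a lower bound on $1-\rho(n,r)$, for the proof of Theorem \ref{t:large}(ii).
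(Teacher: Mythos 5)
Your proof is correct and follows exactly the paper's argument: both identify $(1-\mathrm{Corr}^2(U,V))\mathrm{Var}(U)$ with $\inf_{\lambda\in\R}\mathrm{Var}(U-\lambda V)$ and then specialize to $\lambda=1$. You simply spell out the quadratic minimization and the degenerate $\mathrm{Var}(V)=0$ case, which the paper leaves implicit.
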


The observation follows from noticing that 
\begin{align*}
 (1-\mbox{Corr}^2(U,V)){\rm {Var}}(U)=\inf_{\lambda\in \R} \Var(U-\lambda V)\le {\rm {Var}}(U-V).
\end{align*}

Using Observation \ref{o:corr}, the following Proposition immediately implies Theorem \ref{t:small}, (ii).

\begin{proposition}
\label{p:easyl}
There exists $r_0\in \N$ and positive absolute constants $\delta_1$, $C$, for all $r$ such that $\delta_1 n>(n-r)>r_0$, and all $n$ sufficiently large we have 
$${\rm {Var}}~(T_{n}-T_{r})\leq C(n-r)^{2/3}.$$
\end{proposition}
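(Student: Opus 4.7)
The plan is to decompose
\[
T_n - T_r \;=\; (Z - X) + (W - Y) + Y,
\]
using the notation of \eqref{labels}, and bound each of the three variances separately by $O((n-r)^{2/3})$. The reason to add and subtract $Y = T_{\mathbf{r}, \mathbf{n}}$ is to isolate a term which is independent of $X$ and whose variance is on the correct scale: $\mathrm{Var}(Y) = O((n-r)^{2/3})$ by the standard one-point bound for point-to-point passage times at diagonal distance $n - r$ (see Appendix~\ref{s:appb}). The remaining content of the proof is then showing $\mathbb{E}(Z-X)^2 = O((n-r)^{2/3})$ and $\mathbb{E}(W-Y)^2 = O((n-r)^{2/3})$.

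The key geometric input is that in the regime $n - r \ll n$, the transversal fluctuation of $\Gamma_n$ at the anti-diagonal $\{x + y = 2r\}$ is governed by the \emph{smaller} endpoint-distance $n - r$, not by $r$. Writing $v = (r + s_v, r - s_v)$ for the crossing point, the bound $|s_v| \leq y(n-r)^{2/3}$ holds with probability at least $1 - e^{-cy^2}$; this follows from Proposition~\ref{p:car} applied with $s = (n-r)^{2/3}$ and $t = 1$ (so that $t^{*} s^{3/2} = n-r$), after verifying that the hypotheses $s > s_0$ and $n > Cs^{3/2}$ become $n - r > s_0^{3/2}$ and $n - r < n/C$, both of which hold in our regime provided $r_0$ is taken large and $\delta_1$ small.

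On this event, both $W - Y = T_{v, \mathbf{n}} - T_{\mathbf{r}, \mathbf{n}}$ and $Z - X = T_{\mathbf{0}, v} - T_{\mathbf{0}, \mathbf{r}}$ are controlled by oscillations of one-parameter passage-time profiles over an interval of length $C(n-r)^{2/3}$. For $W - Y$, the relevant profile $s' \mapsto T_{(r+s', r-s'), \mathbf{n}}$ has, by LPP reversal symmetry $(x,y)\mapsto (n-x,n-y)$, the same law as the profile $s'\mapsto L_{n-r,s'}$ appearing in Theorem~\ref{t:aest}, so Theorem~\ref{t:aest} applied with $n$ replaced by $n - r$ and inner scale $s = y(n-r)^{2/3}$ gives $|W - Y| \leq z\, y^{1/2}(n-r)^{1/3}$ with stretched-exponential probability. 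For $Z - X$, Theorem~\ref{t:aest} is applied directly to the profile $s' \mapsto L_{r, s'}$ with inner scale $s = y(n-r)^{2/3}$; the hypothesis $r > Cs^{3/2} \sim C(n-r)$ is guaranteed by the choice of $\delta_1$. Integrating the stretched-exponential tails yields $\mathbb{E}(Z-X)^2, \mathbb{E}(W-Y)^2 = O((n-r)^{2/3})$, and the inequality $\mathrm{Var}(A + B + C) \le 3(\mathrm{Var}(A) + \mathrm{Var}(B) + \mathrm{Var}(C))$ assembles the three pieces into $\mathrm{Var}(T_n - T_r) \leq C(n-r)^{2/3}$.

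The main technical obstacle is that Theorem~\ref{t:aest} controls only the one-sided supremum $\sup_{|s'|<s}(L_{n,s'} - L_{n,0})$, whereas controlling $|Z-X|$ and $|W-Y|$ requires matching lower bounds $\sup_{|s'|<s}(L_{n,0} - L_{n,s'}) \leq z s^{1/2}$ as well (note that, for example, $(X-Z)_+$ cannot be read off from Theorem~\ref{t:aest} alone, as $\mathbf{r}$ is not comparable to $v$ in the partial order). This reverse inequality is obtained by the very same geodesic-decomposition used to prove Theorem~\ref{t:aest}, now applied to the minimizer of $L_{r,\cdot}$ over $|s'|<s$: decompose $L_{r,0}$ via the intersection of $\Gamma_{\mathbf{0},\mathbf{r}}$ with the short anti-diagonal $\mathbb{L}'$, lower-bound $L_{r, s_{**}}$ by the composite path through the same crossing point, and invoke Proposition~\ref{p:sidetoside} together with Lemma~\ref{l:mean}. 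No new ideas are required, but this companion estimate to Theorem~\ref{t:aest} should be stated explicitly before use.
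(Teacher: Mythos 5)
Your decomposition and estimates are sound, and they recover the paper's proof up to a reorganization: the paper expands $\mathrm{Var}(Z+W-X)=\mathrm{Var}(Z-X)+\mathrm{Var}(W)+2\mathrm{Cov}(Z-X,W)$, handles the cross term by Cauchy--Schwarz, and writes $W=(W-Y)+Y$; your three-term split $(Z-X)+(W-Y)+Y$ plus the elementary variance inequality is equivalent. The identifications of scale (transversal fluctuation $|s_v|\lesssim (n-r)^{2/3}$ at $\{x+y=2r\}$ via Proposition~\ref{p:car} with $t=1$, and local regularity of both profiles at scale $(n-r)^{2/3}$) are exactly what the paper uses via ``the proofs of Lemma~\ref{l:yw} and Lemma~\ref{l:p2l} with the roles of $r$ and $n-r$ reversed.''

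The one place where you take a genuinely longer route is the ``main technical obstacle'' paragraph. You are right that Theorem~\ref{t:aest} controls only $\sup(L_{n,s'}-L_{n,0})$ and hence directly bounds $(Z-X)_+$ and $(W-Y)_+$ but not the reversed differences, and your sketch of a companion minimum estimate (decompose $L_{r,0}$ through the crossing of $\Gamma_{\mathbf{0},\mathbf{r}}$ with a short anti-diagonal and invoke Proposition~\ref{p:sidetoside} and Lemma~\ref{l:mean}) would indeed prove one. But the paper bypasses this entirely with the much simpler deterministic inequality
\[
Z+W \;=\; T_n \;\geq\; T_{\mathbf{0},\mathbf{r}}+T_{\mathbf{r},\mathbf{n}} \;=\; X+Y,
\qquad\text{so}\qquad Y-W \;\leq\; Z-X,
\]
which immediately gives $(X-Z)_+\leq (W-Y)_+$ and $(Y-W)_+\leq (Z-X)_+$. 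With this, the one-sided tail bounds already yield two-sided control of both $|Z-X|$ and $|W-Y|$; this is precisely the step ``Observing $Y-W\leq Z-X\leq X^*-X$'' in the proof of Proposition~\ref{p:easyu}, which Proposition~\ref{p:easyl} inherits. No companion estimate is needed. One further small point: when you apply Theorem~\ref{t:aest} with inner scale $s=y(n-r)^{2/3}$, the hypothesis $n>Cs^{3/2}$ fails once $y$ is a large constant; this is not a flaw in the strategy (the paper relies on the same thing in Lemmas~\ref{l:yw} and~\ref{l:p2l}), but you should note, as Remark~\ref{r:randn} does, that the complementary regime $n\leq Cs^{3/2}$ is handled directly by Proposition~\ref{t:treesup}.
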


\begin{proof}
Recall $X,Y,Z,W$ as defined at the beginning of this section. We need to upper bound ${\rm {Var}} (Z+W-X)$. Expanding we get that, 
$${\rm {Var}} (Z+W-X)= {\rm {Var}}(Z-X) +{\rm {Var}}(W)+ 2{\rm {Cov}} (Z-X,W).$$
We shall show each of the terms above is $O((n-r)^{2/3})$ separately. In fact, by Cauchy-Schwarz inequality it suffices to only show that bound for the first two terms. Notice that the picture is same as before except the roles of $r$ and $(n-r)$ has been reversed. Using the proof of Lemma \ref{l:yw} we can now show that 
$$ \P(Z-X \geq y (n-r)^{1/3})\leq e^{-cy^{1/4}}$$
and using the proof of Lemma \ref{l:p2l} it follows that 
$$\P(W-Y \geq y (n-r)^{1/3})\leq e^{-cy^{1/4}}$$
for all $y$ sufficiently large. 
As in the proof of Proposition \ref{p:easyu}, this is then used to argue that 
${\rm {Var}}(Z-X)=O((n-r)^{2/3})$, and $\E[W-Y]^2= O((n-r)^{2/3})$, which together with the observation that $\Var~{Y}=O((n-r)^{2/3})$ completes the proof of the proposition. 
\end{proof}

\section{Proof of lower bounds} 
\label{s:dis}
We now move towards proving Theorem \ref{t:large}. As in the proof of Theorem \ref{t:small}, parts (i) and (ii) of Theorem \ref{t:large} have rather similar proofs as well (after exchanging the roles of $r$ and $n-r$). In this section we describe in detail the line of argument leading to the proof of Theorem \ref{t:large}, (i) in detail, and complete the proof modulo the key result Proposition \ref{p:positive}. At the end of the section we shall give a sketch of how the same strategy is used to prove Theorem \ref{t:large}, (ii).

For the readers' benefit we recall briefly the strategy outlined in Section \ref{idea}. By the FKG inequality, it should suffice to obtain a lower bound on the conditional correlation on an event with probability bounded uniformly below. By the trivial observation ${\rm {Cov}}(X,X+Y)=\Theta(r^{2/3})$, a very natural way to construct such an event is to ask that $v$ is very close to $\mathbf{r}$ which will imply $X\approx Y$ and $Z\approx W$ (using Theorem \ref{t:aest}). However one needs to be careful so that there will be enough fluctuation left in the conditional environment. To this end, it turns out, one can construct such an event  measurable with respect to the configuration outside a thin strip of width $\Theta(r^{2/3})$ around the straightline joining $\mathbf{0}$ to $\mathbf{r}$.

For $\theta>0$, let $R_{\theta}\subseteq \Z^2$ be defined as follows:
$$R_{\theta}:=\{(x,y)\in \Z^2: 0\leq x+y\leq 2r~\text{and}~|x-y|\leq \theta r^{2/3}\}.$$
Let $\omega_{\theta}=\{\omega_{v}:v\in \llbracket 0,n \rrbracket ^2 \setminus R_{\theta}\}$
denote a weight configuration outside $R_{\theta}$. Let $\cf_{\theta}$ denote the $\sigma$-algebra generated by the set of all such configurations $\Omega_{\theta}$. Observe that events measurable with respect to $\cf_{\theta}$ can be written as subsets of $\Omega_{\theta}$, and we shall often adopt this interpretation without explicitly mentioning it.
The major step in the proof is the following proposition. 

\begin{proposition}
\label{p:event}
There exist absolute positive constants $\beta, \delta_1, \theta, C>0$ sufficiently small such that for $\delta_1 n>r>r_0$ and $n$ sufficiently large there exists an event $\ce$ measurable with respect to $\cf_{\theta}$ with $\P(\ce)\geq \beta$ and the following property: for all weight configuration $\omega\in \ce$ we have 
$${\rm {Cov}} (T_{n}, T_{r}\mid \omega) > Cr^{2/3}.$$
\end{proposition}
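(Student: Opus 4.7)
The plan is to implement the barrier/localization strategy sketched in Section \ref{idea}. Fixing a small absolute constant $\theta>0$, I will construct $\ce \in \cf_\theta$ achieving two things simultaneously: (a) conditionally on any $\omega\in \ce$, the polymer $\Gamma_r$ and the initial segment of $\Gamma_n$ (up to the line $x+y=2r$) are both forced to stay inside the thin rectangle $R_\theta$, with their weights both well-approximated by the constrained passage time $\tilde X := \max\{\ell(\gamma):\gamma\subset R_\theta,\, \gamma:\mathbf{0}\to\mathbf{r}\}$; (b) the remaining passage time $W = T_{v,\mathbf{n}}$ differs from $Y:=T_{\mathbf{r},\mathbf{n}}$ by at most $O(r^{1/3})$ uniformly in the possible exit points $v$. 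Since $\tilde X$ depends only on weights inside $R_\theta$, which are independent of $\cf_\theta$, one has $\Var(\tilde X\mid\omega)=\Var(\tilde X)\geq c\theta^{-1/2}r^{2/3}$ for every $\omega \in \ce$ by Lemma \ref{l:realvar}. Combining (a) and (b) shows that the conditional covariance between $T_r$ and $T_n$ is essentially $\Var(\tilde X)$ up to $O(\theta^{-1/4}\, r^{2/3})$ correction, yielding the required lower bound once $\theta$ is small.

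To construct $\ce$, I would take the intersection of a \emph{barrier event} and a \emph{typicality event}. The barrier event requires the sums of weights over a constant number of on-scale sub-boxes lining $\partial R_\theta$ on both sides in $\{0\leq x+y\leq 2r\}$ to lie sufficiently below their means, so that any up-right path that exits $R_\theta$ and traverses one of these strips loses at least $K r^{1/3}$ in mean weight for a large absolute constant $K$. This is a finite intersection of independent events each of positive probability, hence has probability $\geq \beta_1>0$. The typicality event captures two deterministic bounds on the $\cf_\theta$-environment: the local fluctuation inequality of Theorem \ref{t:aest} applied around $\mathbf{n}$ (inverted ensemble) at scale $s\asymp \theta r^{2/3}$, giving $\sup_{|s'|\leq \theta r^{2/3}} |T_{(r+s',r-s'),\mathbf{n}} - Y| \leq r^{1/3}$; and the transversal concentration of Proposition \ref{p:car} ensuring that any geodesic to $\mathbf{n}$ which passes through the line $x+y=2r$ in the window $|s'|\leq \theta r^{2/3}$ arrives there through $R_\theta$. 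Each piece has probability bounded below by an absolute constant, so $\P(\ce)\geq \beta>0$.

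Next I verify that approximations (a) and (b) hold in the conditional $L^2$ sense uniformly in $\omega\in\ce$. Confinement of $\Gamma_r$ and the initial segment of $\Gamma_n$ to $R_\theta$ follows because the barrier makes any exit path lose at least $Kr^{1/3}$ in mean, while $\tilde X - \E\tilde X = O(r^{1/3})$ by Lemma \ref{l:p2l} and the moderate deviation estimate of Theorem \ref{t:moddev}; choosing $K$ large, both $T_r-\tilde X$ and $Z-\tilde X$ have conditional $L^2$-norm $O(r^{1/3})$. The bound $|W-Y|\leq r^{1/3}$ in $L^\infty$ is built directly into $\ce$. Decomposing
$$ {\rm Cov}(T_r, T_n\mid \omega) = {\rm Cov}(T_r, Z\mid\omega) + {\rm Cov}(T_r, W-Y\mid\omega) $$
(using that $Y$ is deterministic given $\omega$, so ${\rm Cov}(T_r, Y\mid \omega)=0$), the first term equals $\Var(\tilde X)$ plus cross-terms bounded by $\sqrt{\Var(\tilde X)}\cdot O(r^{1/3}) = O(\theta^{-1/4}r^{2/3})$ via Cauchy--Schwarz, and the second term is similarly $O(\theta^{-1/4} r^{2/3})$. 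Since $\theta^{-1/2}\gg \theta^{-1/4}$ for small $\theta$, a sufficiently small absolute choice of $\theta$ gives ${\rm Cov}(T_r, T_n\mid\omega)\geq Cr^{2/3}$ for every $\omega\in\ce$.

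The main obstacle is implementing the barrier so that confinement holds \emph{uniformly} over $\omega\in\ce$ with high conditional probability: one must defeat every possible competing path outside $R_\theta$, not only those of typical shape, which calls for a union bound over the on-scale transversal levels the polymer could reach combined with the Gaussian-type deviation estimates of Appendix \ref{s:appa}. A subtler point is that the variance lower bound for $\tilde X$ must survive conditioning; this is automatic because $\ce\in\cf_\theta$ and the weights inside $R_\theta$ remain i.i.d.\ exponentials, but it is the reason the construction must keep the conditioning set entirely outside $R_\theta$. The delicate quantitative balance is to choose $K$ and the barrier thickness large enough to force confinement with very high conditional probability, while keeping $\beta$ bounded below by an absolute constant and not forcing $\theta$ to shrink with $n$ or $r$.
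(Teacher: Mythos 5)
There is a genuine gap in the choice of the comparison variable. You set $\tilde X := X_\theta$ (the passage time constrained to stay inside $R_\theta$) precisely so that $\tilde X$ is independent of $\cf_\theta$, giving the trivial identity $\Var(\tilde X\mid\omega)=\Var(\tilde X)\gtrsim \theta^{-1/2}r^{2/3}$. But this choice breaks the error-term analysis. The barrier cannot actually confine $\Gamma_r$ to $R_\theta$: a barrier strong enough to prevent even tiny excursions past $\partial R_\theta$ would have to penalize individual vertices along a boundary of length $\Theta(r)$, forcing $\P(\ce)\to 0$. What the barrier \emph{can} do is confine the geodesic to a slightly wider cylinder $R_{2\theta}$, as the paper arranges. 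On that event $T_r$ equals $X_*$ (the $R_{2\theta}$-constrained weight), not $X_\theta$, and the two differ substantially: $\E[X_* - X_\theta]=\Theta(\theta^{-1}r^{1/3})$ and moreover $\Var(X_*-X_\theta)=\Theta(\theta^{-1/2}r^{2/3})$ (same order as $\Var X_\theta$, since they do not fluctuate in lockstep). Your Cauchy--Schwarz bound on the cross term is therefore $\sqrt{\Var(\tilde X)}\cdot\sqrt{\Var(T_r-\tilde X\mid\omega)}\gtrsim \theta^{-1/4}r^{1/3}\cdot\theta^{-1/4}r^{1/3}=\Theta(\theta^{-1/2}r^{2/3})$, which is of the \emph{same} order as the main term $\Var(\tilde X)$. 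The claimed bound ``$T_r-\tilde X$ and $Z-\tilde X$ have conditional $L^2$-norm $O(r^{1/3})$'' is false for $\tilde X=X_\theta$, and the intended $\theta^{-1/4}$ vs.\ $\theta^{-1/2}$ separation never materializes.

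The paper's resolution is structural: it uses $X_*$ (constrained to $R_{2\theta}$) as the comparison variable, so that after conditioning on the barrier the error $T_r-X_*$ really does have conditional $L^2$-norm $O(r^{1/3})$ (this is the content of Lemma \ref{geodimp} and the event $\ce_5$, which needs the full machinery of $\cB^{\phi_1}$, $\cC$, $\cD$, $\mathrm{Int}$, $\mathrm{NoTF}$). The price is that $X_*$ depends on the barrier weights, so $\Var(X_*\mid\omega)=\Var(X_*)$ is no longer automatic; one must prove the conditional variance lower bound directly, which is exactly Lemma \ref{l:realvar} / Proposition \ref{p:variance}, established via a Doob-martingale resampling argument over slices of $R_\theta$. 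You cannot sidestep this by choosing the narrower cylinder: the trade-off between ``trivial conditional variance'' and ``small approximation error'' cannot be resolved in your favor, and the nontrivial conditional variance lower bound is unavoidable.
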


The proof of Theorem \ref{t:large}, (i) using Proposition \ref{p:event} is straightforward. 

\begin{proof}[Proof of Theorem \ref{t:large}, (i)]
Observe that for each fixed weight configuration $\omega\subset \Omega_{\theta}$ on the vertices outside $R_{\theta}$, both $T_{n}$ and $T_{r}$ are increasing in the weight configuration on $R_{\theta}$. Observe also that $\E[T_{n}\mid \cf_{\theta}]$ and $\E[T_{r}\mid \cf_{\theta}]$ are both again increasing in the configuration $\omega$. Applying the FKG inequality twice together with Proposition \ref{p:event} then implies

\begin{eqnarray*}
\E T_{n} T_{r} &=& \E \left( \E [T_{n} T_{r} \mid \cf_{\theta}]\right)\\
&=& \int_{\ce}  \E [T_{n} T_{r} \mid \cf_{\theta}] d\omega + \int_{\ce ^c}  \E [T_{n} T_{r} \mid \cf_{\theta}] d\omega\\
&\geq &  \int_{\ce}  \E [T_{n}\mid \cf_{\theta}] \E[T_{r} \mid \cf_{\theta}] d\omega + \int_{\ce ^c}  \E [T_{n}\mid \cf_{\theta}] \E[T_{r} \mid \cf_{\theta}] d\omega + C\beta r^{2/3}\\
&{=} & \E\left( \E [T_{n}\mid \cf_{\theta}] \E[T_{r} \mid \cf_{\theta}]\right)+C\beta r^{2/3}\\
&\geq & \E[T_n]\E[T_{r}]+C\beta r^{2/3};
\end{eqnarray*}
which is what we set out to prove.
\end{proof}

\subsection{Constructing a suitable environment}
The key of the proof is construction of $\ce$, towards which we now move. For easy reference we recall the notations already introduced in Section \ref{s:easy}, that we will use again.
\begin{align*}
X&:=T_{r},\,\,\,
Y:=T_{\mathbf{r},\mathbf{n}},  
Z:=\ell(\Gamma_{\mathbf{0},v}) \text{ and }  W:=\ell(\Gamma_{v,\mathbf{n}}).\\
X^*&:=\max \{\ell(\Gamma_{\mathbf{0},v}): v\in \mathbb{L}_{r}\},
\end{align*}
where $\mathbb{L}_{r}$ denote the line $\{x+y=2r\}$. We shall also denote by $X_*$ (resp.\ $X_{\theta}$) the length of the best path from $\mathbf{0}$ to $\mathbf{r}$ that does not exit $R_{2\theta}$ (resp.\ $R_{\theta}$). Finally for $\phi >\theta$, $\mathbb{L}_{r,\phi}$ shall denote the line segment joining $(r-\phi r^{2/3}, r+\phi r^{2/3})$ and $(r+\phi r^{2/3}, r-\phi r^{2/3})$. We shall denote by $X_{\phi}$ the length of the best path from $\mathbf{0}$ to $\mathbb{L}_{r,\phi}$, and by $Y_{\phi}$ the length of the best path from $\mathbb{L}_{r,\phi}$ to $\mathbf{n}$.

The event $\ce$ will depend on a number of parameters $\phi_0, \phi_1, \phi, L, c_0, C_0, C^*$ (and naturally $\theta$), the choices of which shall be specified later. The event will consist of two major parts.

\begin{enumerate}
\item \textbf{Regular fluctuation of the profile $\{T_{v,\mathbf{n}}:w\in \mathbb{L}_{r,\phi}\}$:} Let $\ce_1$ denote the event that 
\begin{align*}
\left\{ \sup_{w\in \mathbb{L}_{r,\phi_0}} T_{w,\mathbf{n}}- T_{\mathbf{r}, \mathbf{n}} \leq \phi_0^{1/2}\log ^9 (\theta^{-1}) r^{1/3}\right\} \cap \left\{ \sup_{v\in \mathbb{L}_{r,\phi}\setminus  \mathbb{L}_{r,\phi_0}} T_{w,\mathbf{n}}- \sqrt{|w_1-w_2|}\log^{9}(\theta^{-1}) \leq T_{\mathbf{r}, \mathbf{n}} \right\},
\end{align*}
where $w=(w_1,w_2)$. Observe that $\ce_1$ only depends on the weight configuration above the line $\mathbb{L}_{r}$.

\item \textbf{Barrier around $R_{\theta}$:} Let $U_1$ (resp.\ $U_2$) denote a $r\times (\phi-\theta)r^{2/3}$ rectangle whose one set of parallel sides are aligned with the lines $x+y=0$ and $x+y=2r$ respectively and whose left (resp.\ left right) side coincides with the right (resp.\ left) side of $R_{\theta}$\footnote{Here is another convention we shall adopt throughout. In keeping with the often used practice or rotating the picture counter-clockwise by 45 degrees, so that the line $x=y$ becomes vertical, we shall often refer to sides of rectangles parallel to it by `left' and `right', and sides perpendicular to it by `top' and `bottom'.}. For any point $u=(u_1,u_2)\in \Z^2$, let $d(u):=u_1+u_2$. Also, for any region $U$, and points $u,v\in U$, let us denote, by $T_{u,v}^{U}$ to be the length of the longest path from $u$ to $v$ that does not exit $U$. Let $\ce_2$ denote the following event measurable with respect to the configuration in $U_1$: 
$$ T^{U_1}_{u,u'}-\E T_{u,u'} \leq -Lr^{1/3} ~\forall u,u' \in  U_1 ~\text{with}~ |d(u)-d(u')|\geq \frac{r}{L}.$$
Let $\ce_{3}$ denote the same event with $U_1$ replaced by $U_2$. We set $\ce_4:=\ce_2\cap \ce_3$.
\end{enumerate}

Before defining $\ce$ we shall need to define a few events  that are contained in $\ce_4$.
We start with some notational preparation.  As before $\L_r=\{(x,y): x+y=2r\}.$
For any $w \in \L_r$ let $$\cB_w \text{ denote the event that } \ell (\Gamma_{{\bf{0}},w})\ge X_{\theta}.$$
Moreover let $v^*$ be the point at which $\Gamma^*$ intersects $\L_{r}$. Also recall from \eqref{labels} that $v$ denotes the vertex at which $\Gamma_n$ intersects $\L_r$. As before, for any $\kappa>0$ we will denote by $\L_{r,\kappa}$ the line segment $\{(x,y): x+y=2r\}\cap \{(x,y): |x-y|\leq \kappa r^{2/3}\}.$
Moreover let $$\cB^{\kappa}=\bigcup_{w \in \L_r \setminus \L_{r,\kappa}}\cB_w,$$ i.e., there exists some polymer starting at $0$ and ending at $w \in \L_r \setminus \L_{r,\kappa}$ with length as large as $X_{\theta}$. Similarly let $\cB^{\kappa}_*$ denote the event that $v^* \in \L_r \setminus \L_{r,\kappa}.$
Note that  
$\cB^{\kappa}_* \subseteq \cB^{\kappa}.$ Along the same lines let $\cA^{\kappa}_{\rm loc}$ denote the event that 
$$\sup_{w\in \L_{r}\setminus \L_{r,\kappa}} T_{{\bf 0}, w}+T_{w, {\bf{n}}}\ge Y+X_{\theta}.$$

\subsubsection{Choice of parameters}
\label{s:parameters}
We need to fix our choice of parameters appearing in the definitions of the above events before proceeding to proving probability bounds for the same. 
{Throughout the sequel $c_0$ is a small enough universal constant, $C_1$ and $C^*$ will be sufficiently large universal constants. We shall choose $\theta$ to be an arbitrarily small constant; and $L\gg \phi \gg \phi_1 \gg \phi_0$. We need to choose $\phi_0$ poly-logarithmic in $\theta^{-1}$, $\phi_1$ a large inverse power of $\theta$, $\phi$ a large inverse power of $\theta$ depending on $\phi_1$, and $L$ a much larger inverse power of $\theta$ depending on $\phi$. For concreteness we shall fix $\phi_0=\log^{10}(\frac{1}{\theta}),$ $\phi_1=(\frac{1}{\theta})^{10},$ $\phi=(\frac{1}{\theta})^{100}$ and $L=\frac{1}{\theta^{500}}$. Given all of these we shall take $r$ sufficiently large, and $r/n$ sufficiently small}. Throughout the remainder of this paper we shall work with this fixed choice of parameters. 

\subsection{Some consequences of conditioning on  $\ce_4$}
We shall now prove probability bounds of certain events (which will later be used to analyze $\ce$) conditional on $\ce_{4}$ for the above choices of parameters.

\begin{lemma}\label{macrolocal} For all $\theta$ small enough and $\phi_1$ defined as above:
\begin{align}
\label{cond1}
\P(\cB^{\phi_1}\mid \ce_4)&\le e^{-\frac{1}{\theta^2}},\\
\label{cond2}
\P(\cA^{\phi_1}_{\rm loc}\mid \ce_4)&\le e^{-\frac{1}{\theta^2}}.
\end{align}
\end{lemma}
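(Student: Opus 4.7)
The plan is to show that on $\ce_4$, for every $w\in\L_r\setminus \L_{r,\phi_1}$, the weight $T_{\mathbf{0},w}$ is dominated by $X_\theta$ by a margin of at least $(L/2)r^{1/3}$ outside an event of stretched-exponentially small probability; part (i) then follows from a union bound over the $\leq 2r$ choices of $w$ and passage to the conditional probability by dividing by $\P(\ce_4)$. Fix $w=(r+s,r-s)$ with $s\geq \phi_1 r^{2/3}/2$, and WLOG assume $w$ lies on the $U_1$ side of $R_\theta$. The deterministic ingredients are: (a) by the curvature estimate in Theorem \ref{t:moddev}, $\E T_{\mathbf{0},w}\leq \E T_r-c\phi_1^2 r^{1/3}=\E T_r-c\theta^{-20}r^{1/3}$; (b) $X_\theta\geq \E T_r-M_\theta r^{1/3}$ with $M_\theta=\theta^{-C_*}$ for some fixed $C_*\ll 500$, off an event of probability $\leq e^{-c\theta^{-a_0}}$, which follows from the constrained-polymer mean/variance estimates (Proposition \ref{l:variancelb} and Section \ref{s:var}); since $X_\theta$ is $\{\omega_v:v\in R_\theta\}$-measurable, this event is independent of $\ce_4$; and (c) the superadditivity $\E T_{\mathbf{0},u}+\E T_{u,w}\leq \E T_{\mathbf{0},w}$ valid for all $u\preceq w$.

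The geometric heart of the argument is to locate a vertex $u$ on $\Gamma_{\mathbf{0},w}$ with $d(w)-d(u)\geq r/L$ such that the $u$-to-$w$ portion of $\Gamma_{\mathbf{0},w}$ lies entirely inside $U_1$. Take $u:=\Gamma_{\mathbf{0},w}\cap\{x+y=r\}$, the crossing at the midpoint antidiagonal, so that $d(w)-d(u)=r\gg r/L$. The straight line from $\mathbf{0}$ to $w$ has transversal coordinate $sd/(2r)\in[\phi_1 r^{2/3}/4,\phi_1 r^{2/3}]$ at every height $d\in[r,2r]$, a window that sits safely inside $U_1$ with margin at least $\phi_1 r^{2/3}/8$ to both boundaries (using $\theta\ll\phi_1\ll\phi$). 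By the standard transversal-fluctuation estimate for LPP geodesics (recorded in Appendix \ref{s:appa}, obtained from Proposition \ref{p:car} together with a chaining over $O(1)$ antidiagonals at spacings of order $r$), $\Gamma_{\mathbf{0},w}|_{d\in[r,2r]}$ stays uniformly within $\phi_1 r^{2/3}/8$ of this straight line with probability $\geq 1-e^{-c\phi_1^3}=1-e^{-c\theta^{-30}}$, so this portion lies in $U_1$. Consequently $\ell(\Gamma_{\mathbf{0},w}|_{u\to w})\leq T^{U_1}_{u,w}\leq \E T_{u,w}-Lr^{1/3}$ on $\ce_4$. Combining this with the upper-tail bound $T_{\mathbf{0},u}\leq \E T_{\mathbf{0},u}+(L/3)r^{1/3}$ from Theorem \ref{t:moddev} (failing with probability $\leq e^{-c(L/3)^{3/2}}=e^{-c\theta^{-750}}$) and (c), one obtains $T_{\mathbf{0},w}\leq \E T_{\mathbf{0},w}-(2L/3)r^{1/3}$, and then (a) gives $T_{\mathbf{0},w}\leq \E T_r-(2L/3)r^{1/3}$.

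Combining with (b), $T_{\mathbf{0},w}-X_\theta\leq -(2L/3-M_\theta)r^{1/3}\leq -(L/2)r^{1/3}<0$ outside a union of events of probability $\leq e^{-c\theta^{-30}}+e^{-c\theta^{-750}}+e^{-c\theta^{-a_0}}\leq e^{-c'\theta^{-30}}$. Since $r$ is polynomial in $\theta^{-1}$ whereas this probability is super-polynomially small in $\theta^{-1}$, a union bound over the $\leq 2r$ vertices $w\in\L_r\setminus\L_{r,\phi_1}$ leaves the bound essentially unchanged; finally, dividing by $\P(\ce_4)\geq e^{-c\theta^{-b}}$ for some $b\ll 30$ (the lower bound coming from the explicit barrier construction in Section \ref{s:positive}) yields $\P(\cB^{\phi_1}\mid\ce_4)\leq e^{-1/\theta^2}$. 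For part (ii), the same scheme applies with $T_{w,\mathbf{n}}+Y$ added on both sides: both $T_{w,\mathbf{n}}$ and $Y$ depend only on $\{\omega_v:d(v)>2r\}$, so they are independent of $\ce_4$ and of $X_\theta$; Theorem \ref{t:aest} applied to the reversed ensemble (with time horizon $n-r$ and transversal scale $\phi r^{2/3}$, legitimate since $r/n$ is taken small enough) gives $\sup_{w\in\L_{r,\phi}}(T_{w,\mathbf{n}}-Y)\leq \theta^{-c''}r^{1/3}$ off an event of small probability, which is easily absorbed into the gap of size $Lr^{1/3}$.

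The main technical obstacle I foresee is the uniform-in-height transversal-fluctuation confinement of $\Gamma_{\mathbf{0},w}|_{d\in[r,2r]}$ to within $\phi_1 r^{2/3}/8$ of its straight-line trajectory: Proposition \ref{p:car} controls only a single antidiagonal crossing, so uniformity must be obtained by a chaining argument. Fortunately, the required tolerance $\phi_1 r^{2/3}/8$ is much larger than the natural scale $r^{2/3}$ of transversal fluctuation over distance $r$, so only $O(1)$ antidiagonals are needed in the chaining and no refined level-set estimate is required. A secondary mild issue is the bookkeeping of constants in the passage from $\P(\cB^{\phi_1}\cap\ce_4)$ to $\P(\cB^{\phi_1}\mid\ce_4)$, but the exponent $500$ in $L=\theta^{-500}$ is chosen precisely so that the probabilistic gap ($\theta^{-750}$ in the exponent) dominates both the union-bound factor and the conditional-probability inflation.
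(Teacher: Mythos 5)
Your proof takes a genuinely different route than the paper's, and it contains a gap that would require substantial additional work to close.

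The paper's proof rests on a one-line monotonicity observation. Fixing the configuration inside $R_\theta$, the quantity $X_\theta$ is determined, and both $\cB^{\phi_1}$ and $\cA^{\phi_1}_{\rm loc}$ become increasing events in the configuration outside $R_\theta$, while $\ce_4$ is a decreasing event in that same configuration. FKG applied to the outside configuration (with inside fixed), followed by averaging over the inside, gives $\P(\cB^{\phi_1}\mid\ce_4)\leq\P(\cB^{\phi_1})$ and $\P(\cA^{\phi_1}_{\rm loc}\mid\ce_4)\leq\P(\cA^{\phi_1}_{\rm loc})$. One is thereby reduced to unconditional bounds, which follow from the curvature estimate $\E T_{\mathbf{0},w}\leq 4r-c\phi_1^2 r^{1/3}$ for $w\in\L_r\setminus\L_{r,\phi_1}$, a union bound over antidiagonal blocks as in Lemma \ref{l:p2l}, the lower bound on $X_\theta$ from Lemma \ref{l:4th2}, and (for the second bound) Theorem \ref{t:aest} to control $T_{w,\mathbf{n}}-Y$. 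Note that this argument uses nothing about the barrier structure of $\ce_4$ other than its monotonicity, and never touches $\P(\ce_4)$.

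Your proof instead works directly inside $\ce_4$, exploiting the barrier to show that a geodesic to a far endpoint $w$ must traverse a long stretch of $U_1$ and hence loses $\Theta(L r^{1/3})$ in weight, and then converts $\P(\cB^{\phi_1}\cap\ce_4)$ into a conditional probability by dividing by $\P(\ce_4)$. The gap is exactly here: you assume $\P(\ce_4)\geq e^{-c\theta^{-b}}$ for some $b\ll 30$, but Lemma \ref{l:e3} only asserts $\P(\ce_4)>\epsilon(\phi,L)>0$ with no stated dependence on the parameters, and the parameter choice $L=\theta^{-500}$ makes your assumption very dubious. The event $\ce_4$ demands that all constrained passage times over heights $\geq r/L$ in $U_1\cup U_2$ lie $L r^{1/3}$ below their means, a deviation of order $L^{4/3}$ standard deviations for the shortest such blocks; the natural scale of $-\log\P(\ce_4)$ is therefore a power of $L=\theta^{-500}$, far larger than $\theta^{-30}$, and the division by $\P(\ce_4)$ destroys the bound. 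This would have to be repaired by a quantitative lower bound on $\P(\ce_4)$ that the paper does not supply and the FKG route entirely avoids. A secondary issue: your confinement argument only handles $w\in\L_{r,\phi}\setminus\L_{r,\phi_1}$; for $w$ outside $\L_{r,\phi}$ the endpoint is no longer in $U_1$ and the straight line from $\mathbf{0}$ to $w$ exits $U_1$, so the $T^{U_1}_{u,w}$ control from $\ce_4$ does not apply. (Such $w$ are easy to dismiss by curvature alone, but the case must be included.)
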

\begin{proof} Both the events are negative in the environment outside $R_\theta$. Thus it suffices to upper bound $\P(\cB^{\phi_1})$  and $\P(\cA^{\phi_1}_{\rm loc})$ by the FKG inequality. Observe that to prove \eqref{cond1}, we need to show that it is unlikely that $X_{\theta}$ is too small and it also that $\sup_{w\in \L_{r}\setminus \L_{r,\phi_1}}$ is too large. 
The first part follows from the proof of Lemma \ref{l:4th2} (which shows that typically $X_{\theta}-4r=-\Theta(\theta^{-1}r^{1/3})$) and the second part follows as in the proof of Lemma \ref{l:p2l} by noticing that $\E T_{0,w}\leq 4r-c\phi_1^2 r^{1/3}$ for some $c>0$ and for each $w\in \L_{r}\setminus \L_{\phi_1,r}$, and our choice of $\phi_1$.

For the proof of \eqref{cond2}, observe that for any $w\in \L_{r}\setminus L_{r,\phi_1}$ using Theorem \ref{t:aest}, $\E T_{w,\mathbf{n}}-Y=O(\sqrt{|w_1-w_2|})$, whereas $\E T_{0,w}-X_{\theta} = O({|w_1-w_2|^{2}}/r)+O(\theta^{-1}r^{1/3})$ by Lemma \ref{l:p2l} and Lemma \ref{l:4th2}. By our choice of parameters, it follows that in expectation $T_{{\bf 0}, w}+T_{w, {\bf{n}}}-Y-X_{\theta}$ is a large negative quantity.

For the formal proof, set $S_j:=\L_{r,j+1}\setminus \L_{r,j}$. Let $\cC_{j}$ denote the event that $\sup_{w\in S_j} T_{{\bf 0}, w} -X_{\theta} \geq \inf_{w\in S_j} Y-T_{w, {\bf{n}}}$. Clearly, for $j>\phi_1$ we have can upper bound $\P(\cC_{j})$ by 
$$ \P(\sup_{w\in S_j} T_{{\bf 0}, w}-4r \geq -0.001 j^{2}r^{1/3}) +\P(X_{\theta}\leq 4r-0.001j^{2}r^{1/3})+\P(\inf_{w\in S_j} Y-T_{w, {\bf{n}}}\geq -0.002j^{2}r^{1/3}).$$

Using Theorem \ref{t:treesup} for $j<0.9 r^{1/3}$ and the crude estimate as in the proof of Lemma \ref{l:p2l} for $j\geq 0.9r^{1/3}$, we can show that the first probability is upper bounded by $e^{-cj^{2}}$. Arguing as in the proof of Lemma \ref{l:4th2} we get the second probability is upper bounded by $e^{-cj^2 \theta}$, whereas the third probability, by Theorem \ref{t:aest} is upper bounded by $e^{-cj^{2/3}}$. Summing over all $j>\phi_1$ and using that $\phi_1$ is a large power of $\theta^{-1}$ gives the result.  
\end{proof}

Now we say that conditional on $\ce_4$, with high probability the polymers from $0$ to every point in $L_{r,\phi_1}$ is contained in $L_{r,\phi}$. Towards this we first formally define transversal fluctuation of a path $\gamma$. For any path $\gamma$ from $0$ to $w$ for $w\in \L_{r,\phi_1}$ we define ${\rm TF}(\gamma):=\sup\{|x-y|: (x,y)\in \gamma\}$. Let $\cC:=\cC_{\phi_1}$ denote the event $$\sup\left \{\ell(\gamma)-X_{\theta}: {\rm{TF}}(\gamma)\ge  \frac{\phi}{2}r^{2/3}
 \right\}\ge -\theta^{-100}r^{1/3}$$  where the supremum is taken over all $\gamma$ joining $\bf 0$ to some point $w\in \L_{r,\phi_1}$. We have the following lemma. 
 
\begin{lemma}
\label{TFlocal} 
$\P(\cC\mid \ce_4)\le e^{-\frac{1}{\theta^2}}.$
\end{lemma}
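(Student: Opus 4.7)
The argument splits into an FKG-based reduction to an unconditional probability bound, and a parabolic-plus-moderate-deviation estimate for the latter.

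For the FKG step, observe that $\cC$ is an increasing function of the weights outside $R_\theta$: the threshold $X_\theta$ depends only on weights inside $R_\theta$, while $\ell(\gamma)$ for any directed path $\gamma$ is nondecreasing in the vertex weights. On the other hand, $\ce_4 = \ce_2 \cap \ce_3$ is measurable with respect to the weights in $U_1 \cup U_2$ (disjoint from $R_\theta$) and is \emph{decreasing} in them. Conditioning on the $R_\theta$-weights (which are independent of $\ce_4$) and applying FKG to the outside environment, then averaging, gives $\P(\cC \mid \ce_4) \leq \P(\cC)$, and it suffices to prove the unconditional bound $\P(\cC) \leq e^{-1/\theta^2}$.

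Next, by Lemma \ref{l:4th2} and its concentration counterpart, with probability at least $1 - e^{-c\theta^{-300}}$ we have $X_\theta \geq 4r - 2\phi r^{1/3}$; on this event $\cC$ forces the existence of a path $\gamma$ from $\mathbf{0}$ to some $w \in \L_{r,\phi_1}$ with ${\rm TF}(\gamma) \geq (\phi/2)r^{2/3}$ and $\ell(\gamma) \geq 4r - 3\phi r^{1/3}$. For any such $\gamma$ let $w^*$ be the first vertex with $|w^*_1 - w^*_2| = (\phi/2)r^{2/3}$; by symmetry write $w^* = (m - \Delta/2, m + \Delta/2)$ with $\Delta = (\phi/2)r^{2/3}$, so that $\ell(\gamma) \leq T_{\mathbf{0}, w^*} + T_{w^*, w}$. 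The parabolic expansion $\E T_{\mathbf{0},(a,b)} = (\sqrt{a}+\sqrt{b})^2 \approx 4m - \Delta^2/(4m)$ together with its analogue for $T_{w^*, w}$, minimized over $m$, yields
\[
\E[T_{\mathbf{0}, w^*} + T_{w^*, w}] \leq 4r - \tfrac{1}{4}\phi^2 r^{1/3}.
\]
The target $4r - 3\phi r^{1/3}$ lies above the mean by order $\phi^2 r^{1/3}$, so the upper-tail moderate deviation estimates (Theorem \ref{t:moddev} and its consequences in Appendix \ref{s:appa}), applied with $t \sim \phi^2$, give, for each fixed pair $(w^*, w)$,
\[
\P\bigl(T_{\mathbf{0}, w^*} + T_{w^*, w} \geq 4r - 3\phi r^{1/3}\bigr) \leq e^{-c\phi^3}.
\]
A union bound over the $O(r)$ admissible positions of $w^*$ on the line $\{|x - y| = (\phi/2)r^{2/3}\}$ and the $O(\phi_1 r^{2/3})$ endpoints $w \in \L_{r,\phi_1}$ then gives $\P(\cC) \leq e^{-c\phi^3/2} \ll e^{-1/\theta^2}$, since $\phi^3 = \theta^{-300}$ dominates both the polynomial factors and $\theta^{-2}$.

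The main technical hurdle is applying moderate deviations uniformly across all admissible $w^*$: when $m$ is close to $0$ or $r$, the point $w^*$ is far off-scale and the standard moderate-deviation regime must be replaced by cruder bounds. In those boundary cases, however, $\E T_{\mathbf{0}, w^*}$ or $\E T_{w^*, w}$ is itself very small, so the target $4r - 3\phi r^{1/3}$ is beaten by an even wider margin; these cases are routinely handled using the estimates collected in Appendices \ref{s:appa} and \ref{s:appb}.
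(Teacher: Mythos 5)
Your FKG reduction is correct and in fact more carefully stated than the paper's one-line version: you correctly identify $\cC$ as increasing in the weights outside $R_\theta$ (for each fixed inside-$R_\theta$ configuration) and $\ce_4$ as a decreasing event in $U_1\cup U_2$, condition on the inside weights, and average. And the parabolic-penalty heuristic for the unconditional bound is the right one—it is exactly what drives the paper's Proposition \ref{p:tfnew}, which the authors simply invoke together with Proposition \ref{t:treebox} to lower bound $X_\theta$.

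However, the execution of the unconditional bound has a genuine gap: the union bound over the $O(r)$ lattice positions of $w^*$ on $\{|x-y|=(\phi/2)r^{2/3}\}$ (and over $O(\phi_1 r^{2/3})$ choices of $w$) produces a prefactor that is polynomial in $r$. Here $\theta$ (hence $\phi$) is fixed while $r\to\infty$ with no upper bound, so a prefactor like $r^{5/3}$ is \emph{not} dominated by $e^{-c\phi^2}$ once $r$ exceeds $e^{c'\phi^2}$; your claimed conclusion $\P(\cC)\le e^{-c\phi^3/2}$ cannot hold uniformly in $r$ via this route. The correct way to control the supremum of $T_{\mathbf{0},w^*}+T_{w^*,w}$ is at the block level: partition into $O(r^{2/3})$-sized boxes, use Proposition \ref{t:treesup} to control the supremum over all pairs within a box, and union bound over boxes, exploiting that the parabolic penalty grows with the transverse distance so that the per-box probabilities are summable. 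This is exactly what Lemma \ref{l:0} and Lemma \ref{l:j} do inside Proposition \ref{p:tfnew}, and is why the paper cites that proposition rather than redoing a point-by-point union bound. A secondary, smaller issue: the per-pair bound $e^{-c\phi^3}$ is not justified by Theorem \ref{t:moddev}, which gives only an $e^{-ct}$ tail; with $t\sim\phi^2$ the available bound is $e^{-c\phi^2}$. This alone would still suffice (since $\phi^2=\theta^{-200}\gg\theta^{-2}$), but the superlinear exponent you wrote would require the $t^{3/2}$ upper-tail estimate, which the paper does not assume.
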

\begin{proof} The event is again negative in the environment outside $R_{\theta}$. Thus by the FKG inequality it suffices to upper bound $\P(\cC)$. This follows by the upper bounding supremum of $\ell(\gamma)$ (where $\gamma$ varies over all paths having huge transversal fluctuation) using Proposition \ref{p:tfnew} and the lower bound on $X_{\theta}$ using Proposition \ref{t:treebox}, and by checking that the proof of Proposition \ref{p:tfnew} works with our choice of parameters, as $\phi$ is a sufficiently large power of $\phi_1$.
\end{proof}

The next lemma shows that on $\cE_4$, the point to line polymer weight $X^*$ is in fact 
well approximated by $X_* $ and moreover the end point $v^*$ which by \eqref{cond1} lies inside $\L_{r,\phi_1}$ with high probability in fact lies in $\L_{r,\phi_0}\subset \L_{r,\phi_1}$ with high probability. 
To proceed we define an event similar to $\cC$ but considering paths that have transversal fluctuation less than $\frac{\phi}{2} r^{2/3}.$

More precisely let $\cD:=\cD_{\phi_1}$ be the event:
$$\sup_{w,\gamma} \left \{\ell(\gamma)-X_{*}+\frac{|w_1-w_2|^{2}}{100 r \theta^{3/2} }:  {\rm{TF}}(\gamma)\le  \frac{\phi}{2}r^{2/3}
 \right\}\ge 0$$   where the supremum is taken over $w=(w_1,w_2)\in \L_{r,\phi_1}\setminus \L_{r,\phi_0}$ and all $\gamma$ joining $\mathbf{0}$ and $w$. We have the following lemma.

\begin{lemma}
\label{geodimp} 
For all $\theta$ sufficiently small and choice of parameters as in Section \ref{s:parameters} we have:
\begin{enumerate}
\item $\P(\cD\mid \cE_4) \le e^{-\log^2(\frac{1}{\theta})},$
\item $\E((X^*-X_*)^2\mid \cE_4) {\le r^{2/3}}.$
\end{enumerate}
\end{lemma}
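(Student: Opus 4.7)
I would prove (1) first and then use it to deduce (2) together with Lemmas~\ref{macrolocal} and~\ref{TFlocal}.

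For part (1), the strategy is to union-bound over dyadic shells $\{w : s \le |w_1-w_2| < 2s\}$ for $s \in [\phi_0 r^{2/3}, \phi_1 r^{2/3}]$; there are only $O(\log(1/\theta))$ such shells. Fix a shell and a candidate path $\gamma$ from $\mathbf{0}$ to $w$ with $\mathrm{TF}(\gamma) \le \tfrac{\phi}{2}r^{2/3}$; by symmetry assume $w_1 > w_2$. Let $p$ be the last vertex of $\gamma$ with $|x-y| \le \theta r^{2/3}$. Monotonicity forces $\gamma|_{p,w} \subset U_1$ and
$$d(w)-d(p) \ge (w_1-w_2) - \theta r^{2/3} \ge s/2 \ge \phi_0 r^{2/3}/2.$$
When $r$ is moderate so that $\phi_0 r^{2/3}/2 \ge r/L$, the event $\cE_2$ directly yields $\ell(\gamma|_{p,w}) \le T^{U_1}_{p,w} \le \E T_{p,w} - L r^{1/3}$; for larger $r$, one instead applies $\cE_2$ to a sub-path of $\gamma|_{p,w}$ of $d$-length exactly $r/L$ and controls the remainder via moderate deviation (Proposition~\ref{t:treesup}). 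Combining with concentration $T_{\mathbf{0},p} \le \E T_{\mathbf{0},p} + O(r^{1/3}\log(1/\theta))$ (from Proposition~\ref{t:treesup}), superadditivity $\E T_{\mathbf{0},p}+\E T_{p,w} \le \E T_{\mathbf{0},w} \le 4r - c(w_1-w_2)^2/r$, and the lower bound $X_* \ge 4r - O(\theta^{-1} r^{1/3})$ valid on $\cE_4$ (from Lemma~\ref{l:4th2} and the variance estimates for constrained LPP in $R_{2\theta}$), one arrives at
$$X_* - \ell(\gamma) \ge L r^{1/3} - O(\theta^{-1} r^{1/3}) - (\text{fluctuation}).$$
Since $L = \theta^{-500}$ dwarfs both $\theta^{-1}$ and the threshold $s^2/(100 r \theta^{3/2}) \le \phi_1^2 r^{1/3}/(100\theta^{3/2})$, the gap $X_* - \ell(\gamma) > s^2/(100r\theta^{3/2})$ holds with stretched-exponential probability, and union-bounding over the $O(\log(1/\theta))$ shells produces the claimed $e^{-\log^2(1/\theta)}$ decay.

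For part (2), write $X^* - X_* = (X^* - T_{\mathbf{0},\mathbf{r}}) + (T_{\mathbf{0},\mathbf{r}} - X_*)$, both summands nonnegative. Define $\mathcal{G} := \cE_4 \cap (\cB^{\phi_1})^c \cap \cC^c \cap \cD^c$; Lemmas~\ref{macrolocal} and~\ref{TFlocal} together with part (1) already proved yield $\P(\mathcal{G}^c \mid \cE_4) \le 4 e^{-\log^2(1/\theta)}$. On $\mathcal{G}$ the endpoint $v^*$ of $\Gamma^*$ must lie in $\L_{r,\phi_0}$: indeed if $v^* \in \L_{r,\phi_1}\setminus \L_{r,\phi_0}$ then $\Gamma^*$ (which satisfies $\ell(\Gamma^*) = X^* \ge X_*$) would realize $\cD$, contradicting $\cD^c$. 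Theorem~\ref{t:aest} then gives $X^* - T_{\mathbf{0},\mathbf{r}} \le \sup_{w \in \L_{r,\phi_0}}(T_{\mathbf{0},w} - T_{\mathbf{0},\mathbf{r}}) = O(\phi_0^{1/2} r^{1/3})$ with very high probability. For the second summand, on $\mathcal{G}$ the $\cE_4$-barrier penalises any polymer exiting $R_{2\theta}$ by $L r^{1/3}$, so $T_{\mathbf{0},\mathbf{r}} = X_*$ except on a stretched-exponentially small sub-event. Thus $\E[(X^*-X_*)^2 \mathbf{1}_\mathcal{G} \mid \cE_4] \le C\phi_0 r^{2/3}$, while on $\mathcal{G}^c$ the crude $(X^*-X_*)^2 \le (X^*)^2 = O(r^2)$ together with $\P(\mathcal{G}^c \mid \cE_4) \le 4e^{-\log^2(1/\theta)}$ contributes negligibly, giving $\E[(X^*-X_*)^2 \mid \cE_4] \le r^{2/3}$ for $\theta$ small and $r$ sufficiently large.

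The main obstacle is uniformly handling the application of $\cE_2$ to $\gamma|_{p,w}$ across all $r$: for moderate $r$ the monotonicity bound on $d(w) - d(p)$ exceeds $r/L$ automatically, but for very large $r$ one must carve out a sub-segment of exact $d$-length $r/L$ and piece together the $\cE_2$ suppression with a moderate-deviation bound on the short remaining piece. The rest of the argument is careful bookkeeping of expectation and fluctuation estimates from the appendix.
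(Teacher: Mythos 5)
Your high-level plan (union-bound over shells, peel off the last excursion into $U_1$, invoke the barrier event $\cE_2$, compare with $X_*$ via $\cA_1$-type concentration) is in the right spirit, and your observation in part (2) that on $(\cB^{\phi_1})^c\cap\cC^c\cap\cD^c$ the endpoint $v^*$ of $\Gamma^*$ must lie in $\L_{r,\phi_0}$ is a correct and useful reduction. However, the core mechanism for part (1) has a genuine gap that your ``carve out a sub-segment'' fix does not repair, and the bound you obtain in part (2) does not match what the lemma actually asserts.

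\textbf{Gap in part (1).} Let $p$ be the last vertex of $\gamma$ in $R_\theta$. Your argument needs $|d(p)-d(w)|\ge r/L$ so that $\cE_2$ can penalise $T^{U_1}_{p,w}$ by $Lr^{1/3}$. But the shortest excursion to $w\in\L_{r,\phi_1}\setminus\L_{r,\phi_0}$ has $d$-length roughly $|w_1-w_2|-\theta r^{2/3}\le \phi_1 r^{2/3}$, which is $\ll r/L=\theta^{500}r$ once $r^{1/3}\gg \phi_1 L$. Since the lemma must hold for \emph{all} $r$ sufficiently large with $\theta$ held fixed, this regime is unavoidable. Your fallback --- applying $\cE_2$ to a sub-path of $\gamma|_{p,w}$ of $d$-length exactly $r/L$ --- is impossible precisely in this regime, because $\gamma|_{p,w}$ is shorter than $r/L$. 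If you then drop the barrier penalty entirely and keep only the curvature term, superadditivity plus Theorem~\ref{t:moddev} gives $\ell(\gamma)-X_*\lesssim -c(w_1-w_2)^2/r+O(\theta^{-1}r^{1/3})$, which is \emph{weaker} than the required threshold $-(w_1-w_2)^2/(100r\theta^{3/2})$ (the latter carries an extra $\theta^{-3/2}$). So the barrier-only approach cannot close the argument. What the paper does instead, and what your proof is missing, is to first force the candidate path $\gamma$ to pass through $S_{i_0}$ (the box of $d$-width $r/L$ just below the line $x+y=2r-2\theta^{3/2}r$) and to remain in $R_{2\theta}$ up to that point, using the $\rm Int$/$\rm NoTF$ machinery of Lemma~\ref{l:123}; then the comparison $\ell(\gamma)-X_*\le T_{v_0,w}-T^{R_\theta}_{v_0,\mathbf{r}}$ is made over the \emph{short} scale $2r-d(v_0)\approx \theta^{3/2}r$, where the off-axis displacement $|w_1-w_2|$ produces a curvature loss of order $(w_1-w_2)^2/(\theta^{3/2}r)$ --- exactly the $\theta^{-3/2}$-boosted penalty needed to beat the threshold in $\cD$. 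This mechanism is not present in your argument.

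\textbf{Issues in part (2).} First, the quantitative bound is off. Your decomposition gives $X^*-T_{\mathbf{0},\mathbf{r}}\le \sup_{w\in\L_{r,\phi_0}}(T_{\mathbf{0},w}-T_{\mathbf{0},\mathbf{r}})=O(\phi_0^{1/2}r^{1/3})$, hence a second moment of order $\phi_0 r^{2/3}=\log^{10}(1/\theta)\, r^{2/3}$, which is \emph{larger} than the claimed $r^{2/3}$ once $\theta$ is small. The paper avoids this by making the $X^*-X_*$ comparison at the scale $\theta^{3/2}r$ (through $v_0\in S_{i_0}$ again, together with Lemma~\ref{l:sideregularity}), obtaining $O(\theta\, r^{2/3})$, which is genuinely $\le r^{2/3}$. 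Second, your claim that ``on $\mathcal{G}$ the $\cE_4$-barrier penalises any polymer exiting $R_{2\theta}$ by $Lr^{1/3}$, so $T_{\mathbf{0},\mathbf{r}}=X_*$'' is not justified: none of the events you place in $\mathcal{G}=\cE_4\cap(\cB^{\phi_1})^c\cap\cC^c\cap\cD^c$ rules out $\Gamma_{\mathbf{0},\mathbf{r}}$ exiting $R_{2\theta}$ while staying inside $R_\phi$; to get this you would need the $\rm Int$ and $\rm NoTF$ events (or their analogues for the point-to-point geodesic), which you have not invoked. The paper sidesteps this by never asserting $X=X_*$; it bounds $X^*-X_*$ directly by the difference of two point-to-anti-diagonal quantities starting at $v_0$.

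In short: the union-bound-plus-barrier skeleton is reasonable, but the barrier penalty alone is the wrong tool for excursions shorter than $r/L$, and the missing ingredient --- localising the path to $S_{i_0}$ and exploiting the curvature loss over the remaining $\theta^{3/2}r$ steps --- is what makes the paper's proof work uniformly in $r$ and what produces the correct $O(\theta r^{2/3})$ bound in part (2).
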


The proof of the above lemma is technical and will be postponed to Section \ref{s:l2}. 

\subsection{Construction of $\ce$}
We are now ready to define the event $\ce$. First we define certain nice events conditioned on $\cE_4$ towards the proof of Proposition \ref{p:event}.
  
\begin{enumerate}
\item Let $\ce_5$ denote the set of all $\omega=\omega_{\theta}\in \ce_4$ such that 
$$\E[(X^*-X_*)^2\mid \omega] \leq 10r^{2/3},$$ 
\item 
 Let $\ce_6$ denote the set of all $\omega=\omega_{\theta}\in \ce_4$ such that 
$$\E[(Z+W-Y-X_*)^2 \mid \omega] \leq 4 \phi_0^2 C_0 r^{2/3},$$ for some universal constant $C_0.$
\item Let $\ce_7$ denote the set of all $\omega\in \ce_4$ such that 
$$ {\rm {Var}}~(X_*\mid \omega) \geq c_0\theta^{-1/2} r^{2/3}.$$
\end{enumerate}

We shall set 
\begin{equation}
\label{e:edef}
\ce:=\ce_1\cap \ce_5 \cap \ce_6\cap \ce_{7}.
\end{equation}

\subsection{Proof of Proposition \ref{p:event}}
It remains to prove Proposition \ref{p:event} using the $\ce$ defined above. First we need to establish some properties of $\ce$. 

\begin{proposition}
\label{p:positive}
There exists  $\beta>0$ depending on all parameters such that $\P(\ce)>\beta$.
\end{proposition}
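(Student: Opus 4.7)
The plan is to verify that the four defining events of $\ce$ each have positive conditional probability when intersected with the common base event $\ce_4$. First I would note a key independence: $\ce_1$ is measurable with respect to weights strictly above $\mathbb{L}_r$, while $\ce_4 = \ce_2 \cap \ce_3$ depends only on weights in $U_1 \cup U_2 \subset \{x+y \le 2r\}$, so $\ce_1$ and $\ce_4$ are independent. Theorem \ref{t:aest}, applied at scale $\phi_0 r^{2/3}$ for the inner bound and across a dyadic partition of $\mathbb{L}_{r,\phi}\setminus \mathbb{L}_{r,\phi_0}$ for the outer bound, gives $\P(\ce_1) \ge 1 - e^{-c\log^{2}(\theta^{-1})}$. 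For $\ce_4$, weights in the two barrier rectangles are independent, so it suffices to lower bound $\P(\ce_2)$. Since each event $\{T^{U_1}_{u,u'}-\E T_{u,u'}\le -Lr^{1/3}\}$ is decreasing in the weights of $U_1$, the FKG inequality reduces the joint event to individual events, each a lower-tail event for the constrained passage time across an on-scale rectangle; Gaussian-type lower-tail estimates from Appendix \ref{s:appa} (essentially Proposition \ref{t:treeinf}) give a positive lower bound $p=p(\theta,L,\phi)$ for each, and a union bound over the $O(L^2)$ relevant pairs of anti-diagonal levels preserves positivity, yielding $\P(\ce_4) \ge q(\theta) > 0$.

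For $\ce_5$ and $\ce_6$, I would apply Markov's inequality to the conditional second-moment functionals defining them. Lemma \ref{geodimp}(2) directly gives $\E[(X^*-X_*)^2 \mid \ce_4] \le r^{2/3}$, whence $\P(\ce_5 \mid \ce_4) \ge 9/10$. For $\ce_6$, the pathwise identity $T_n = Z+W \ge X_* + Y$ (concatenate the $R_{2\theta}$-constrained path to $\mathbf r$ with $\Gamma_{\mathbf r,\mathbf n}$) together with the reverse inequality $T_n \le X^* + Y + \phi_0^{1/2}\log^9(\theta^{-1}) r^{1/3}$ valid on the typical event $\{v \in \mathbb{L}_{r,\phi_0}\} \cap \ce_1$ (since then $Z \le X^*$ and $W \le Y + \phi_0^{1/2}\log^9(\theta^{-1}) r^{1/3}$) yields the deterministic sandwich
$$0 \le Z+W-X_*-Y \le (X^*-X_*) + \phi_0^{1/2}\log^9(\theta^{-1}) r^{1/3}$$
on that event. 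Squaring and applying Lemma \ref{geodimp}(2) controls the main contribution; the exceptional set $\{v \notin \mathbb{L}_{r,\phi_0}\} \cup \ce_1^c$ has small conditional probability on $\ce_4$ by Lemmas \ref{macrolocal}, \ref{TFlocal}, and \ref{geodimp}(1), and its $L^2$ contribution is absorbed using polynomial tail bounds on $T_n, X_*, Y$ coming from Theorem \ref{t:moddev} and the appendix. This delivers $\E[(Z+W-Y-X_*)^2 \mid \ce_4] \le \phi_0^2 C_0 r^{2/3}$, hence $\P(\ce_6 \mid \ce_4) \ge 3/4$.

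For $\ce_7$ I would invoke Lemma \ref{l:realvar}, which, as previewed in Section \ref{idea} and the abstract, is expected to provide the matching conditional lower bound $\E[\mathrm{Var}(X_* \mid \cf_\theta) \mathbf{1}_{\ce_4}] \ge c\theta^{-1/2} r^{2/3} \P(\ce_4)$. Combined with a second-moment upper bound on $\mathrm{Var}(X_* \mid \cf_\theta)$ of the same order (which follows from the unconditional bound $\mathrm{Var}(X_*) \le C\theta^{-1/2} r^{2/3}$), Paley--Zygmund yields $\P(\ce_7 \mid \ce_4) \ge c_2 > 0$ for an appropriate small choice of the constant $c_0$ in the definition of $\ce_7$. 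Choosing $c_0$ small enough so that $c_2 > 7/20$, a union bound over conditional complements gives
$$\P(\ce_5 \cap \ce_6 \cap \ce_7 \mid \ce_4) \ge c_2 - \tfrac{1}{10} - \tfrac{1}{4} > 0,$$
and since $\P(\ce_1) \ge 1-\varepsilon$ with $\varepsilon$ much smaller than $(c_2-\tfrac{7}{20})\, q(\theta)$, the overall bound $\P(\ce) \ge \beta > 0$ follows.

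The main obstacle is Step 3, the variance lower bound for $\ce_7$: Lemma \ref{l:realvar} must supply a sharp \emph{conditional} variance bound on a positive-probability slice of $\ce_4$ with the explicit $\theta^{-1/2}$ scaling, as this smallness relative to $\phi_0^2 = \log^{20}(\theta^{-1})$ is precisely what drives the covariance lower bound used in Theorem \ref{t:large}. A secondary delicacy is in the $L^2$ control for $\ce_6$, where one must transfer the unconditional polynomial tail estimates for $T_n, X_*, Y$ to the $\ce_4$-conditioned setting; since $\ce_4$ is only moderately rare and depends on weights disjoint from $R_\theta$, this transfer should incur only a constant-factor loss, but verifying this uniformly in $\theta$ requires care with the appendix estimates.
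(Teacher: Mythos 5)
Your overall decomposition mirrors the paper's: independence of $\ce_1$ from $\ce_4$, a positive lower bound on $\P(\ce_4)$, and conditional high-probability bounds for $\ce_5, \ce_6, \ce_7$ via Markov, assembled by a union bound. The treatments of $\ce_5$ (Markov plus Lemma~\ref{geodimp}(2)) and $\ce_6$ (pathwise sandwich on the typical event, $L^2$ control on the exceptional set) agree in spirit with the paper's Lemma~\ref{l:e6}.

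However, there is a genuine gap in your handling of $\ce_7$. You guess that Lemma~\ref{l:realvar} supplies an expectation-level bound of the form $\E\bigl[\mathrm{Var}(X_*\mid\cf_\theta)\,\mathbf{1}_{\ce_4}\bigr] \ge c\theta^{-1/2}r^{2/3}\P(\ce_4)$, and then appeal to Paley--Zygmund. In fact the paper's Lemma~\ref{l:realvar} asserts a \emph{deterministic} (pointwise) containment $\ce_8\subseteq\ce_7$, where $\ce_8$ is the event that at least half the indices $i$ are ``good'' in a sense controlled by a transverse-concentration estimate; and then Lemma~\ref{l:e8} shows $\P(\ce_8\mid\ce_4)\ge 0.9$ by an elementary FKG plus Markov argument on the count of bad indices. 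Your proposed Paley--Zygmund route, besides not being what the lemma actually provides, leaves its own hole: to run Paley--Zygmund on $\mathrm{Var}(X_*\mid\cf_\theta)$ you would need control on its \emph{second} moment $\E\bigl[\mathrm{Var}(X_*\mid\cf_\theta)^2\bigr]$, whereas the unconditional bound $\mathrm{Var}(X_*)\le C\theta^{-1/2}r^{2/3}$ only controls the first moment $\E\bigl[\mathrm{Var}(X_*\mid\cf_\theta)\bigr]$. You flag this step as the ``main obstacle'' but do not close it; the paper's route through $\ce_8$ sidesteps this entirely and is cleaner.

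A secondary issue is the argument for $\P(\ce_4)>0$. You write that a ``union bound over the $O(L^2)$ relevant pairs of anti-diagonal levels preserves positivity,'' but a union bound does not yield a lower bound on a conjunction; what you need is the FKG product bound over finitely many decreasing events. You also cite Proposition~\ref{t:treeinf} as a ``Gaussian-type lower-tail estimate,'' but that proposition is an \emph{upper} bound on the probability of the lower-tail event, not a lower bound. A genuine lower bound on the probability that constrained passage times are uniformly much smaller than typical is exactly what is nontrivial here; the paper punts this to (a minor variant of) Lemma~8.3 of \cite{BSS14}, which does the required construction. Your sketch of this step would not compile into a proof as written.

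Finally, a small numeric discrepancy: the paper's Lemma~\ref{l:e1} gives $\P(\ce_1)\ge 1-e^{-c\log^4(\theta^{-1})}$, not $\log^2$; this does not affect the conclusion.
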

The proof of this Proposition is deferred to Section \ref{s:positive}. 
We are now ready to prove Proposition \ref{p:event}.
\begin{proof}[Proof of Proposition \ref{p:event}]
Let $\ce$ be as defined above. By Proposition \ref{p:positive} we know that $\P(\ce)$ is bounded below as required. Fix $\omega=\omega_{\theta}\in \ce$. Observe that $Y$ is a deterministic function of $\omega$. Using linearity of covariance and Cauchy-Schwarz inequality, we have for each $\omega \in \ce$
\begin{eqnarray*}
{\rm {Cov}} (X, Z+W \mid \omega) &=& {\rm {Cov}} (X, Z+W-Y \mid \omega)\\
&=& {\rm {Cov}}(X_*, Z+W-Y\mid \omega)+ {\rm {Cov}}(X-X_*, Z+W-Y \mid \omega)\\
&=& {\rm {Var}} (X_*\mid \omega)+ {\rm {Cov}}(X_*, Z+W-Y-X_* \mid \omega)\\
&+& {\rm {Cov}}(X-X_*,X_*\mid \omega)+ {\rm {Cov}} (X-X_*,Z+W-Y-X_*\mid \omega)\\
&\geq & {\rm {Var}} (X_*\mid \omega)-\sqrt{{\rm {Var}}(X^*)}\left(\sqrt{{\rm {Var}}(X-X_*\mid \omega)}+\sqrt{{\rm {Var}}(Z+W-Y-X_*\mid \omega)}\right)\\
&-& \sqrt{{\rm {Var}}(X-X_*\mid \omega)}\sqrt{{\rm {Var}}(Z+W-Y-X_*\mid \omega)}.
\end{eqnarray*}

By definition of $\ce_5$, and the observation that $X^*\geq X \geq X_*$ we get that for each $\omega\in \ce$, ${\rm {Var}}(X-X_*\mid \omega)\leq 10r^{2/3}$. By definition of $\ce_6$, ${\rm {Var}}(Z+W-Y-X_*\mid \omega)=O(\phi_0^{2} r^{2/3}).$ The proof  is completed by the definition of $\ce_7$, observing that by our choices of parameters $\theta^{-1/2}\gg \phi_0^2$. 
\end{proof}  

We can now illustrate how the proof of Theorem \ref{t:large}, (ii) can be completed along the same lines. We shall only provide a sketch. 

\begin{proof}[Proof of Theorem \ref{t:large}, (ii)]
First observe that in the notation of the above proof, using Cauchy-Schwarz inequality we have for all $\omega\in \ce$, 
$${\rm Var}(Z+W\mid \omega) \geq {\rm Var}(X_*\mid \omega)-2\sqrt{{\rm Var}(Z+W-Y-X_*\mid \omega)}\sqrt{{\rm Var}(X_*\mid \omega)}.$$
By definition of $\ce_6$ and $\ce_7$, we get that for $\theta$ sufficiently small and for all $\omega\in \ce$, we have ${\rm Var}(Z+W\mid \omega)\geq c(\theta) r^{2/3}$ for some $c(\theta)>0$. Now we make the same definitions as before, but interchange the roles of $r$ and $(n-r)$. Let the event corresponding to $\ce$ be now denoted $\ce'$. The analogue of Proposition \ref{p:positive} and the above observation now implies that for $1\ll n-r \ll n$ there exists a positive probability set $\ce'$ such that for each $\omega\in \ce'$, ${\rm Var}(T_{n}\mid \omega)\geq c(n-r)^{2/3}$ (and $T_{r}$ is a deterministic function of $\omega\in \ce'$). This implies for some constant $c'>0$ we have 
$$c'(n-r)^{2/3} \le \inf_{\lambda}{\rm Var}(T_n-\lambda T_r)= 
 (1-\mbox{Corr}^2(T_r,T_n)){\rm {Var}}(T_n);$$
 which completes the proof.
\end{proof}

The remainder of the paper is devoted to the proof of Proposition \ref{p:positive}.

\section{Probability of favourable geometric events}
\label{s:positive}

In this section we prove Proposition \ref{p:positive}. Recall the parameters from Section \ref{s:parameters}. We need a number of lemmas. We start by showing that $\ce_1$ is extremely likely. 
\begin{lemma}
\label{l:e1}
There exists positive constants  $r_0$, $\delta_1$ such that for all $\delta_1 n > r> r_0$, we have
$$\P(\ce_1)\geq 1-e^{-c\log ^{4} (\theta^{-1})}.$$
\end{lemma}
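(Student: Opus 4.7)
The plan is to bound the two conjuncts in the definition of $\cE_1$ separately, using Theorem \ref{t:aest} applied in the reversed ensemble (interchanging $\mathbf{0}$ and $\mathbf{n}$, so that the one-sided profile $w\mapsto T_{w,\mathbf{n}}$ for $w$ on the line $x+y=2r$ becomes a point-to-varying-endpoint profile over a line at ``time'' $n-r$). The reversal step is legitimate by distributional symmetry of the i.i.d.\ weight field, and this is the same move already used in the proof of Lemma \ref{l:yw}.

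For the first conjunct, I would apply Theorem \ref{t:aest} directly in the reversed ensemble with width parameter $s=\phi_0 r^{2/3}$ (which makes $|s'|<s$ coincide with the line segment $\mathbb{L}_{r,\phi_0}$) and with $z=\log^{9}(\theta^{-1})$, so that $zs^{1/2}=\phi_0^{1/2}\log^{9}(\theta^{-1})r^{1/3}$. This immediately yields
$$\P\left(\sup_{w\in\mathbb{L}_{r,\phi_0}} T_{w,\mathbf{n}}-T_{\mathbf{r},\mathbf{n}}\geq \phi_0^{1/2}\log^{9}(\theta^{-1})r^{1/3}\right)\leq e^{-c(\log^{9}(\theta^{-1}))^{4/9}}=e^{-c\log^{4}(\theta^{-1})}.$$
The hypothesis $n>Cs^{3/2}$ of Theorem \ref{t:aest} translates in the reversed ensemble to $n-r>C\phi_0^{3/2}r$, which is satisfied once $\delta_1$ is chosen small enough depending on $\phi_0$.

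For the second conjunct, I would dyadically decompose $\mathbb{L}_{r,\phi}\setminus \mathbb{L}_{r,\phi_0}$ into shells $S_j:=\mathbb{L}_{r,2^{j+1}\phi_0}\setminus \mathbb{L}_{r,2^{j}\phi_0}$ for $j=0,1,\dots,J$ where $J=\lceil \log_2(\phi/\phi_0)\rceil=O(\log\theta^{-1})$. On $S_j$ one has $\sqrt{|w_1-w_2|}\log^{9}(\theta^{-1})\geq 2^{j/2}\phi_0^{1/2}\log^{9}(\theta^{-1})r^{1/3}$, so it suffices to bound $\sup_{w\in\mathbb{L}_{r,2^{j+1}\phi_0}}T_{w,\mathbf{n}}-T_{\mathbf{r},\mathbf{n}}$ by this quantity. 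Apply Theorem \ref{t:aest} with $s=2^{j+1}\phi_0 r^{2/3}$ and $z=\tfrac{1}{\sqrt{2}}\log^{9}(\theta^{-1})$, and note that with these choices $zs^{1/2}$ equals exactly the target $2^{j/2}\phi_0^{1/2}\log^{9}(\theta^{-1})r^{1/3}$. Since $z$ is independent of $j$, each shell contributes a failure probability of at most $e^{-c\log^{4}(\theta^{-1})}$, and a union bound over the $J=O(\log\theta^{-1})$ shells yields a total bound of $J\cdot e^{-c\log^{4}(\theta^{-1})}\leq e^{-c'\log^{4}(\theta^{-1})}$ for $\theta$ small.

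The hypothesis $n-r>Cs^{3/2}$ must hold uniformly for all $s$ up to $2\phi r^{2/3}$, so $\delta_1$ must be chosen small enough depending on $\phi$; since $\phi=\theta^{-100}$ is a fixed constant for the present argument, this is fine. No genuine obstacle arises: the proof is essentially Theorem \ref{t:aest} combined with a dyadic cover, and the main thing to verify is that the parameter $z$ in the dyadic bound does not shrink with $j$ (which, as above, it doesn't), so the exponential tail persists across scales.
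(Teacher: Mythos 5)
Your proof is correct and takes essentially the same route as the paper's: the first event is a direct application of Theorem~\ref{t:aest} (in the reversed ensemble), and the second follows from Theorem~\ref{t:aest} applied over a partition of $\L_{r,\phi}$ together with a union bound. The only cosmetic difference is that you use a dyadic partition into $O(\log\theta^{-1})$ shells while the paper partitions into $\theta^{-100}$ unit-scale intervals; either yields the claimed bound since the per-piece failure probability $e^{-c\log^4(\theta^{-1})}$ dominates the polynomial (or logarithmic) number of pieces.
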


\begin{proof}
Recall the two events whose intersection $\ce_1$ consists of. That the first of those has probability at least $1-e^{-c\log ^{4} (\theta^{-1})}$ is an immediate consequence of Theorem \ref{t:aest}. The probability lower bound for the second event also follows from Theorem \ref{t:aest} after discretizing the line segment $\L_{r,\phi}$  into $\theta^{-100}$ intervals of $O(1)$ length and taking a union bound. 
\end{proof}

The next lemma shows that $\ce_4$ occurs with positive probability. 

\begin{lemma}
\label{l:e3}
There exists $\epsilon=\epsilon(\phi,L)>0$ such that $\P(\ce_4)>\epsilon$.
\end{lemma}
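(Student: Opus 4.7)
My plan is as follows. Since $\ce_2$ and $\ce_3$ are measurable with respect to configurations in the disjoint rectangles $U_1$ and $U_2$, they are independent, so $\P(\ce_4) = \P(\ce_2)\P(\ce_3)$. By symmetry of the definitions, it suffices to show $\P(\ce_2) \ge \epsilon'(\phi,L) > 0$ uniformly in $r$.

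The main step is a slab-decomposition argument. Partition $U_1$ into $K = \Theta(L^3)$ equal antidiagonal slabs $S_1, \dots, S_K$ of antidiagonal width $2r/K$, and for each $k$ let $M_k$ denote the line-to-line maximum restricted passage time $T^{S_k}_{v,v'}$ over pairs of endpoints $v, v'$ on the two antidiagonal boundaries of $S_k$ inside $U_1$. Define the event $A_k := \{M_k \le \E M_k - \alpha (r/K)^{1/3}\}$ for a constant $\alpha = \alpha(L,\phi)$ chosen as below. The lower-tail moderate deviation estimate (Theorem \ref{t:moddev}), together with its line-to-line consequences collected in Appendix \ref{s:appa}, yields $\P(A_k) \ge p_0 > 0$ uniformly in $k$ and $r$, with $p_0$ depending on $\alpha$. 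Since the slabs are spatially disjoint the events $\{A_k\}$ are mutually independent, so $\P(\cap_k A_k) \ge p_0^K > 0$, giving the desired positive lower bound depending on $\phi,L$ but not on $r$.

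It remains to verify $\cap_k A_k \subseteq \ce_2$. For any pair $u, u' \in U_1$ with $m := |d(u)-d(u')| \ge r/L$, polymer ordering gives $T^{U_1}_{u,u'} \le \sum_{k = k_1}^{k_2} M_k$, where $k_1, k_2$ index the slabs containing $u, u'$ and the interior endpoint contributions are bounded by $M_{k_1}$ and $M_{k_2}$ via extending endpoints down (respectively up) to the bottom (top) antidiagonal of their respective slabs. Using $\E M_k \approx 4r/K + O((r/K)^{1/3})$, subadditivity of $\E T$, and the slope-excess correction of order $\phi^2 r^{1/3}$ inside $U_1$, this gives
\[
T^{U_1}_{u,u'} - \E T_{u,u'} \;\le\; O(\phi^2 r^{1/3}) + O(r/K) - (k_2 - k_1 + 1)\,\alpha\,(r/K)^{1/3}.
\]
Since $m \ge r/L$ and $K = \Theta(L^3)$ imply $k_2 - k_1 + 1 \ge L^2/2$, choosing $\alpha$ a sufficiently large polynomial in $L$ and $\phi$ (and $K$ correspondingly) makes the cumulative gain dominate all error terms, yielding $T^{U_1}_{u,u'} \le \E T_{u,u'} - L r^{1/3}$ as required.

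The main technical obstacle is the control of $\E M_k$ and the lower-tail probability $\P(A_k)$ when each slab is much wider transversally ($\phi r^{2/3}$) than its natural KPZ scale $(r/K)^{2/3}$; this requires careful use of the on-scale rectangle estimates in Appendix \ref{s:appa} along the lines of \cite{BSS14, BGH17}. A secondary issue is absorbing the $O(r/K)$ boundary-slab overage into the cumulative gain, which dictates the joint choice of $K$ and $\alpha$ (and ultimately the constant $\epsilon(\phi,L)$).
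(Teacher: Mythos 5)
Your overall strategy---partition $U_1$ into antidiagonal slabs, force a per-slab negative deviation with probability bounded below uniformly in $r$, and exploit independence across disjoint slabs---is exactly the kind of barrier construction the paper invokes (it cites Lemma 8.3 of \cite{BSS14} without reproducing details). The independence of $\ce_2$ and $\ce_3$ across $U_1$ and $U_2$ is also correct. However, the issue you flag at the end as ``secondary'' is in fact fatal to the argument as written.

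The problem is the bound $T^{U_1}_{u,u'} \le \sum_{k=k_1}^{k_2} M_k$ when $u$ and $u'$ lie strictly in the interior of the endpoint slabs $S_{k_1}$ and $S_{k_2}$. Extending $u$ to the bottom of $S_{k_1}$ (and $u'$ to the top of $S_{k_2}$) overcounts the antidiagonal extent by up to $4r/K$, and since $\E M_k = 4r/K + O((r/K)^{1/3})$ to leading order, the comparison with $\E T_{u,u'} \approx 2|d(u)-d(u')|$ leaves behind an additive error of order $r/K$, precisely as you record. But the cumulative gain from the intersection of the $A_k$ is at most
$(k_2-k_1+1)\,\alpha\,(r/K)^{1/3} \le K\alpha(r/K)^{1/3} = \alpha K^{2/3} r^{1/3}$,
which scales like $r^{1/3}$, whereas $r/K$ grows linearly in $r$. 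With $\alpha = \alpha(\phi,L)$ and $K = K(\phi,L)$ fixed, the error term $O(r/K)$ eventually dominates the gain for every $r$ large enough; making $\alpha$ grow like $r^{2/3}$ would drive $p_0(\alpha)\to 0$ and destroy the uniform lower bound on $\P(\ce_4)$. So the desired inequality $T^{U_1}_{u,u'}-\E T_{u,u'}\le -Lr^{1/3}$ cannot be concluded from the displayed estimate.

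The repair requires not bounding the endpoint-slab pieces by the full line-to-line maxima $M_{k_1}, M_{k_2}$. Instead the slab events must also control interior-to-boundary restricted passage times relative to their own expectations, for example by including an estimate of the form $\sup_{u,w}\bigl(T^{S_{k}}_{u,w}-\E T_{u,w}\bigr)\le C(r/K)^{1/3}$ (over interior $u$, boundary $w$, with slope constraints), so that the linear-in-$r$ leading terms cancel slab-by-slab leaving only $O((r/K)^{1/3})$-size discrepancies. Only after that cancellation can the cumulative gain $\Theta(\alpha K^{2/3}r^{1/3})$ (with $K=\Theta(L^3)$ so that paths with $|d(u)-d(u')|\ge r/L$ cross $\Theta(L^2)$ slabs) be compared against $Lr^{1/3}$ and the slope-excess correction, which incidentally is of order $\phi^2 L r^{1/3}$ rather than $\phi^2 r^{1/3}$, since the longitudinal separation can be as small as $r/L$. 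With that modification, your independence argument and the rough scaling you propose do carry through, in the same spirit as the cited construction.
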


\begin{proof}
This proof is a minor variant of the proof of Lemma 8.3 of \cite{BSS14}. We omit the details. 
\end{proof}

The next lemmas will show that conditional on $\ce_4$, the events  $\ce_5$, $\ce_6$ and $\ce_7$ are quite likely. The proof for $\ce_5$ will already follow from Lemma \ref{geodimp} (2). So we focus on $\ce_6.$

\begin{lemma}
\label{l:e6}
There exists positive constants  $r_0$, $\delta_1$ such that for all $\delta_1 n > r> r_0$, we have
$$\E[(Z+W-Y-X_*)^2  \mid \ce_4] \leq 4\phi_0^2 r^{2/3}.$$
\end{lemma}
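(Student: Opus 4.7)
The plan is to leverage the trivial lower bound $T_n \geq X_* + Y$ (obtained by concatenating an $R_{2\theta}$-constrained path achieving $X_*$ with $\Gamma_{\mathbf{r},\mathbf{n}}$), which gives $Z + W - Y - X_* \geq 0$, and then to upper-bound this non-negative quantity on a favourable event while controlling the complement crudely. Define the favourable event
$$\mathcal{G} := \{v \in \L_{r, \phi_0}\},$$
where $v$ denotes the point at which $\Gamma_n$ intersects $\L_r$, and split the conditional second moment into contributions from $\mathcal{G}$ and $\mathcal{G}^c$.

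On $\mathcal{G}$, the first event defining $\ce_1$ yields $W - Y = T_{v,\mathbf{n}} - T_{\mathbf{r},\mathbf{n}} \leq \phi_0^{1/2} \log^9(\theta^{-1}) r^{1/3}$, while $Z = T_{\mathbf{0}, v} \leq X^*$ trivially gives $Z - X_* \leq X^* - X_*$. Hence on $\mathcal{G}$,
$$0 \leq Z + W - Y - X_* \leq (X^* - X_*) + \phi_0^{1/2} \log^9(\theta^{-1}) r^{1/3}.$$
Squaring and applying Lemma \ref{geodimp}(2), which supplies $\E[(X^* - X_*)^2 \mid \ce_4] \leq r^{2/3}$, bounds the $\mathcal{G}$-contribution by a fixed polylog$(\theta^{-1})$ multiple of $r^{2/3}$, which fits inside the target bound after minor adjustment of the exponents in the definition of $\phi_0$.

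For the $\mathcal{G}^c$ contribution, the first task is to show $\P(\mathcal{G}^c \mid \ce_4)$ is stretched-exponentially small in $\log \theta^{-1}$. If $v \notin \L_{r, \phi_1}$, then since $T_{\mathbf{0},v} + T_{v,\mathbf{n}} = T_n \geq X_\theta + Y$ (using $X_* \geq X_\theta$), the event $\cA^{\phi_1}_{\rm loc}$ of Lemma \ref{macrolocal} must occur. If instead $v \in \L_{r, \phi_1} \setminus \L_{r, \phi_0}$, then either $\Gamma_{\mathbf{0}, v}$ has transversal fluctuation at least $\tfrac{\phi}{2} r^{2/3}$, placing us on $\cC$ of Lemma \ref{TFlocal}, or its transversal fluctuation is smaller and Lemma \ref{geodimp}(1) on $\cD^c$ gives $T_{\mathbf{0}, v} \leq X_* - c|v_1-v_2|^2/r$; combined with the second event defining $\ce_1$ (which bounds $T_{v,\mathbf{n}} - Y$ by $\sqrt{|v_1-v_2|} \log^9(\theta^{-1})$) this forces $T_{\mathbf{0}, v} + T_{v, \mathbf{n}} < X_* + Y \leq T_n$, contradicting the definition of $v$. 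Hence $\mathcal{G}^c \cap \ce_4 \subseteq \cA^{\phi_1}_{\rm loc} \cup \cC \cup \cD \cup \ce_1^c$, each of which has probability at most $e^{-c \log^2(\theta^{-1})}$ conditional on $\ce_4$ by Lemmas \ref{macrolocal}, \ref{TFlocal}, \ref{geodimp}(1), and \ref{l:e1}.

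Finally, Cauchy-Schwarz yields
$$\E[(Z+W-Y-X_*)^2 \mathbf{1}_{\mathcal{G}^c} \mid \ce_4] \leq \sqrt{\P(\mathcal{G}^c \mid \ce_4)} \cdot \sqrt{\E[(Z+W-Y-X_*)^4 \mid \ce_4]}.$$
The fourth moment is controlled crudely through Theorem \ref{t:moddev} applied to $T_n$, $Y$, and $X_*$, each having sub-exponential tails around its mean, together with the lower bound $\P(\ce_4) \geq \epsilon(\phi, L) > 0$ from Lemma \ref{l:e3}; this gives a polynomial-in-$n$ upper bound, which is dwarfed by the stretched-exponential decay of the first factor. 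The main technical obstacle is ensuring that the quadratic loss in $T_{\mathbf{0}, v}$ provided by Lemma \ref{geodimp}(1) beats the square-root gain in $T_{v, \mathbf{n}} - Y$ on the whole range $|v_1 - v_2| \geq \phi_0 r^{2/3}$, which is precisely what dictates the parameter choice $\phi_0 = \log^{10}(\theta^{-1})$ made in Section \ref{s:parameters}.
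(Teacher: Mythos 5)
Your overall decomposition---split on whether the crossing point $v$ lands in $\L_{r,\phi_0}$, bound the main term using $X^*-X_*$ and the profile regularity, and control the bad set by Cauchy--Schwarz and a fourth-moment bound---matches the paper's argument in structure, and your identification of $\mathcal{G}^c\cap\ce_4\cap\ce_1\subseteq\cA^{\phi_1}_{\rm loc}\cup\cC\cup\cD$ is essentially the same case analysis the paper uses to show $v\in\L_{r,\phi_0}$ on the good set. However, the final step contains a genuine gap.

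The crude bound $\E[(Z+W-Y-X_*)^4\mid\ce_4]\le \E[(Z+W-Y-X_*)^4]/\P(\ce_4)$ is useless here. The probability $\P(\ce_4)\geq\epsilon(\phi,L)$ furnished by Lemma~\ref{l:e3} is bounded below only by a constant that depends on the barrier parameters; since $\ce_4$ demands that constrained passage times across the blocks of $U_1,U_2$ are simultaneously depressed by $Lr^{1/3}$, with $L=\theta^{-500}$, the quantity $\epsilon^{-1}$ is at least stretched-exponentially large in $\theta^{-1}$. Meanwhile the gain from $\sqrt{\P(\mathcal{G}^c\mid\ce_4)}$ is only of order $e^{-c\log^2(\theta^{-1})}$, which decays much more slowly. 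So the product does not become small, and the estimate cannot be closed this way no matter how the exponent in $\phi_0$ is tuned. The paper's route around this is not a crude bound but a monotonicity argument: one first replaces $X_*$ by $X_\theta$ (valid since $X_*\geq X_\theta$ and $Z+W-Y-X_\theta\geq 0$), and then observes that $Y$ and $X_\theta$ are measurable with respect to coordinates disjoint from $U_1\cup U_2$ while $Z+W=T_n$ is increasing in the $U_1\cup U_2$-coordinates; since $\ce_4$ is a decreasing event in exactly those coordinates, the FKG inequality gives $\E[(Z+W-Y-X_\theta)^4\mid\ce_4]\leq\E[(Z+W-Y-X_\theta)^4]$ \emph{without} any $\epsilon^{-1}$ loss. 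The unconditional fourth moment is then $O(\theta^{-4}r^{4/3})$ via Lemmas~\ref{l:4th1}, \ref{l:yw}, \ref{l:p2l}, and this polynomial-in-$\theta^{-1}$ quantity is indeed beaten by $e^{-c\log^2(\theta^{-1})}$.

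Two smaller points: your main-term bound is conditional on $\ce_1$, so you should really be working on $\mathcal{G}\cap\ce_1$ (easy to fix, since you already put $\ce_1^c$ in the bad set). And bounding $W-Y$ by the pointwise inequality defining $\ce_1$ and then squaring costs you an extra $\log^{18}(\theta^{-1})$ beyond what Theorem~\ref{t:aest} gives for $\E[\sup_{w\in\L_{r,\phi_0}}|T_{w,\mathbf{n}}-Y|^2]$ directly; the paper's choice to compute this expectation directly (using that it is independent of $\ce_4$) is cleaner and avoids having to revisit the definition of $\phi_0$.
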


\begin{proof}
Let $A$ be the intersection of all the nice events $\cE_1, \cD^c, (\cB^{\phi_1})^c, (\cA_{\rm loc}^{\phi_1})^{c}, \cC^{c}$ along with the event ${\sT}:= X_{\theta}\ge 4r-\frac{1}{\theta^{2}}r^{1/3}$. Thus 
\begin{align*}\E[(Z+W-Y-X_*)^2  \mid \ce_4]&=\E[(Z+W-Y-X_*)^2 1_A \mid \ce_4]+\E[(Z+W-Y-X_*)^2  1_{A^c}\mid \ce_4]\\
&\le \E[(Z+W-Y-X_*)^2 1_A \mid \ce_4]+\E[(Z+W-Y-X_{\theta})^2  1_{A^c}\mid \ce_4]
\end{align*}

We first bound the first term. Note that on $(\cA_{\rm loc}^{\phi_1})^c$ the vertex $v\in \L_{r,\phi_1}$. Moreover we claim that  on $\cC^c \cap \cE_1\cap \cD^{c}\cap {\sT}$ in fact  $v\in \L_{r,\phi_0}.$
To see this consider any path $\gamma$ from $\bf{0}$ to $w=(w_1,w_2),$ followed by $\Gamma_{w,n}$ with weights $Z'$ and $W'$ respectively. Assume $w$ lies in $\L_{r,\phi_1}\setminus \L_{r,\phi_0}.$
We analyze two cases: 
\begin{enumerate}
\item $\gamma$ exits $R_{\phi}:$ Since we are on the event $\cC^c\cap \ce_1,$ this implies $Z'+W'-Y\le 4r-\frac{1}{\theta^4}r^{1/3}$ which is less than $X_*$ since we are also on the event  ${\sT}.$
\item $\gamma$ does not exit $R_{\phi}:$ Here the definition of the event $\cD$ implies that 
$$\ell(\gamma)-X_*\le -\frac{|w_1-w_2|^2}{100 r \theta^{3/2}}.$$ Whereas $\ce_1$ implies that $|W'-Y|\le \sqrt{|w_1-w_2|}\log^{9}(\theta^{-1}).$ Since $|w_1-w_2|^2\ge r\sqrt{|w_1-w_2|},$ putting these together we get $X_*+Y\ge Z'+W'.$
\end{enumerate}

Thus 
\begin{align*}
\E[(Z+W-Y-X_*)^2 1_A \mid \ce_4] &\le \E\left[\sup_{w \in \L_{r,\phi_0}}|T_{w,{\bf{n}}}-Y|^2\mid \cE_4\right]+\E[(X^*-X_*)^2\mid \cE_4]\\
&=O(\phi_0 r^{2/3})
\end{align*}
where in the last inequality we use that the first term is independent of the conditioning, Theorem \ref{t:aest} and Lemma \ref{geodimp}.
Now to bound $\E[(Z+W-Y-X_{\theta})^2  1_{A^c}\mid \ce_4]$ we use Cauchy-Schwarz inequality along with the fact that $\P(A\mid \ce_4)\ge 1-e^{-\log^2(\theta)}$ and that by FKG inequality $$\E[(Z+W-Y-X_{\theta})^4\mid \ce_4]\le \E[(Z+W-Y-X_{\theta})^4]$$ along with the following bound:
 \begin{align*}E[(Z+W-Y-X_\theta)^4]
&\le \E[(X^*+W-Y-X_\theta)^4]\\
&\le O(\E[(X^*-X_{\theta})^4]+\E[(W-Y)^4])\\
&=O(\theta^{-4}r^{4/3}).
\end{align*}
For the last equality we use Lemma \ref{l:4th1} and observe that $\E[(W-Y)^4]=O(r^{4/3})$ can be deduced from Lemma \ref{l:yw} and Lemma \ref{l:p2l} as in the proof of Proposition \ref{p:easyu}.
\end{proof}

It remains to control the conditional probability of $\ce_7$. Before stating a lemma towards that we need to make the following notations, which will be useful throughout the rest of the paper. We divide the rectangles $R_{\theta}$ (and $R_{2\theta}$) by parallel lines $\mathcal{L}_{i}:=\{x+y=2i\theta^{3/2}r\}$. The part of $R_{\theta}$ (resp.\ $R_{2\theta}$) enclosed between the lines $\mathcal{L}_{i}$ and $\sL_{i+1}$ shall be denoted by $R_{\theta}^{i}$ (resp.\ $R_{2\theta}^{i}$). For $\omega\in \Omega_{\theta}$, an index $i\in [\frac{1}{5}\theta^{-3/2}]$ is called \textbf{good} if 
$$\P\left(\sup_{u\in R_{2\theta}^{5i}, v\in R_{2\theta}^{5i+4}} T_{u,v} - 2|d(u)-d(v)|\leq  C^*\theta^{1/2}r^{1/3}\mid \omega \right) \geq 0.99$$
for some fixed large constant $C^*$. Let $\ce_{8}$ denote the subset of all $\omega\in \ce_4$ such that the fraction of good indices is at least a half. We now relate $\ce_{7}$ to $\ce_8$. 

\begin{lemma}
\label{l:realvar}
For each $C^*$, there exists $c_0>0$ sufficiently small (in the definition of $\ce_7$) such that $\ce_{8}\subseteq \ce_{7}$.
\end{lemma}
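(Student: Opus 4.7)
The plan is to lower-bound the conditional variance $\mathrm{Var}(X_* \mid \omega)$ by aggregating independent contributions from each good super-slab. Let $k = \lfloor \theta^{-3/2}/5 \rfloor$, and for $i \in \{0, 1, \ldots, k-1\}$ let $Z_i$ denote the restriction of the random environment to the super-slab $R_\theta^{[5i, 5i+4]}$. Conditionally on $\omega$, the $Z_i$ are mutually independent because they inhabit disjoint subsets of an i.i.d.\ field, so the Hoeffding (equivalently, the order-one part of the Efron--Stein) decomposition yields
\begin{equation*}
\mathrm{Var}(X_* \mid \omega) \;\geq\; \sum_{i=0}^{k-1} \mathrm{Var}\bigl(\E[X_* \mid \omega, Z_i]\bigr).
\end{equation*}
On $\ce_8$ at least $k/2$ indices are good, so it will suffice to show that for each good $i$ the single-index marginal variance is at least $c\,\theta\, r^{2/3}$; summing then produces the desired $\Omega(\theta^{-1/2} r^{2/3})$ provided $c_0$ is chosen smaller than, say, $c/20$.

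For a fixed good index $i$, I will select canonical interior crossing points $u_i^\star \in \sL_{5i}$ and $u_{i+1}^\star \in \sL_{5i+5}$ (for instance, those nearest the centre-line of $R_{2\theta}$) and introduce the restricted passage time $Y_i(Z_i) := T^{R_{2\theta}^{[5i,5i+4]}}_{u_i^\star, u_{i+1}^\star}$. Because this is a constrained polymer weight inside an on-scale rectangle of dimensions $\sim \theta^{3/2} r$ by $\sim \theta\, r^{2/3}$, the thin-rectangle constrained variance estimate (Proposition \ref{l:variancelb}, a rescaled form of which applies here) gives $\mathrm{Var}(Y_i) \geq c\,\theta\, r^{2/3}$. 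I would then establish a \textbf{rigidity claim}: for each good $i$, with $Z_{-i}$-probability at least $0.9$ and on the $0.99$-probability good event in $Z_i$, the geodesic $\Gamma^*$ realising $X_*$ crosses $\sL_{5i}$ and $\sL_{5i+5}$ within transversal distance $o(\theta\, r^{2/3})$ of $u_i^\star$ and $u_{i+1}^\star$. On this event the map $Z_i \mapsto X_*$ differs from $Z_i \mapsto Y_i(Z_i)$ only by a quantity whose dependence on $Z_i$ contributes sub-leading variance, whence $\mathrm{Var}(\E[X_* \mid \omega, Z_i]) \geq c'\, \theta\, r^{2/3}$.

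The main obstacle is the rigidity step. The good condition provides only a one-sided upper bound $T_{u,v} - 2|d(u)-d(v)| \leq C^* \theta^{1/2} r^{1/3}$ uniformly over boundary pairs of the super-slab; I would convert this into two-sided control on the entry/exit location of $\Gamma^*$ by combining it with on-scale transversal-fluctuation estimates for the adjacent super-slabs (in the spirit of those used in proving Lemma \ref{TFlocal}) together with a lower-bound comparison: the canonical crossing $Y_i$ has fluctuation of the matching scale $\theta^{1/2} r^{1/3}$, so any competing route cannot beat it by more than a constant multiple of $\theta^{1/2} r^{1/3}$, which via the usual curvature-of-the-mean heuristic forbids transversal excursions inside the super-slab of size much larger than $\theta\, r^{2/3}$. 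Once rigidity is in hand, the proof concludes via the Hoeffding variance decomposition displayed above, with $c_0$ chosen sufficiently small in terms of $c'$, which itself depends on $C^*$ through the rigidity argument.
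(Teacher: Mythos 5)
Your variance decomposition is a legitimate alternative to what the paper does: the paper sets up a Doob martingale $M_j = \E[X_* \mid \cg_j]$ where $\cg_0$ reveals everything except the middle blocks $R_\theta^{5i_j+2}$ of good super-slabs and each subsequent $\cg_j$ reveals one more middle block, using $\Var(X_*\mid\omega) \geq \sum_j \E[(M_j-M_{j-1})^2]$; your first-order Hoeffding projection onto the independent super-slabs gives a comparable inequality. Both reduce the problem to showing that each good super-slab contributes $\gtrsim \theta r^{2/3}$. The difference, and the gap, lies entirely in how that per-slab contribution is extracted.

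Your rigidity claim is false, and it is load-bearing. The constrained geodesic realizing $X_*$ enters and exits a super-slab at points that fluctuate at the full width scale $\theta r^{2/3}$ of $R_\theta$ (the super-slab has longitudinal extent $\sim \theta^{3/2} r$, so its natural transversal scale $(\theta^{3/2} r)^{2/3} \sim \theta r^{2/3}$ is comparable to, not smaller than, the width of the strip); there is no mechanism that pins the crossing points to within $o(\theta r^{2/3})$ of a fixed centre point $u_i^\star$. Your own heuristic only excludes excursions \emph{larger} than $\theta r^{2/3}$, which is automatic inside $R_\theta$, so it does not pin anything. Without rigidity, the identification of $\E[X_*\mid\omega,Z_i]$ with the canonical constrained passage time $Y_i(Z_i)$ up to a term of sub-leading variance is unjustified: the crossing points, averaged over all other super-slabs, remain random at the relevant scale, and the discrepancy $X_* - Y_i$ itself depends on $Z_i$ through exactly this randomness.

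What the paper does instead is deliberately \emph{not} try to pin the geodesic. It resamples the middle block $R_\theta^{5i_j+2}$ conditionally on the boost event $F = \{T^{R_\theta}_{e_j,f_j} \geq 4\theta^{3/2}r + 2C^*\theta^{1/2}r^{1/3}\}$ (with $e_j, f_j$ the midpoints of the left and right sides of that block), coupled monotonically to the unconditioned block via FKG. The crux is that regardless of where the original geodesic crosses $R_\theta^{5i_j}$ and $R_\theta^{5i_j+4}$ — call those random points $u,v$ — the good-index condition gives a \emph{uniform} upper bound $T^{R_{2\theta}}_{u,v} \leq 2|d(u)-d(v)| + C^*\theta^{1/2}r^{1/3}$ over all such $u,v$, while the boosted environment admits a reroute through $e_j, f_j$ of weight at least $2|d(u)-d(v)| + \tfrac{11C^*}{10}\theta^{1/2}r^{1/3}$ on a high-probability set of surrounding configurations; this yields a deterministic gain $X_*(\underline{\tilde\omega}) - X_*(\underline\omega) \geq \tfrac{C^*}{10}\theta^{1/2}r^{1/3}$. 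Combined with $\P(F) \geq c$ from Lemma \ref{l:fbound}, the per-slab martingale increment second moment is $\gtrsim \theta r^{2/3}$, without ever localizing the geodesic below scale $\theta r^{2/3}$. To repair your argument you would need to replace the rigidity step with a coupling/boost argument of this kind — at which point your proof essentially becomes the paper's.
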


Proof of Lemma \ref{l:realvar} is rather involved and is provided in Section \ref{s:var}. The next lemma deals with the conditional probability of $\ce_8$.

\begin{lemma}
\label{l:e8}
If $C^*$ is sufficiently large, and $\theta$ is sufficiently small, then $\P(\ce_8\mid \ce_4)\geq 0.9$.
\end{lemma}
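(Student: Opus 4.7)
The plan is to combine a tail bound on passage-time suprema (per index) with Markov's inequality to control the probability that a given index is bad, then use FKG to remove the conditioning on $\ce_4$, and finally apply Markov once more to the count of bad indices.

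First I would, for each index $i\in [\frac{1}{5}\theta^{-3/2}]$, define
$$M_i:=\sup\{T_{u,v}-2|d(u)-d(v)|:\,u\in R_{2\theta}^{5i},\,v\in R_{2\theta}^{5i+4}\}.$$
The slabs $R_{2\theta}^{5i}$ and $R_{2\theta}^{5i+4}$ are on-scale rectangles of dimensions $\Theta(\theta^{3/2}r)\times \Theta(\theta r^{2/3})$, separated by diagonal distance $\Theta(\theta^{3/2}r)$, so every pair of endpoints has slope bounded away from $0$ and $\infty$. Combining Theorem \ref{t:moddev} (which gives $\E T_{u,v}=2|d(u)-d(v)|+O(\theta^{1/2}r^{1/3})$ uniformly in such pairs) with Proposition \ref{t:treesup} (supremum of centered passage times over on-scale rectangles with bounded slopes) yields a stretched-exponential tail of the form
$$\P(M_i>C^*\theta^{1/2}r^{1/3})\le e^{-c(C^*)^{\alpha}}$$
for some $\alpha>0$ and all $C^*$ sufficiently large. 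In particular this probability can be made at most $10^{-4}$ by choosing $C^*$ large enough. Since index $i$ fails to be good iff $\P(M_i>C^*\theta^{1/2}r^{1/3}\mid \omega)>0.01$, Markov's inequality gives $\P(i\text{ not good})\le \P(M_i>C^*\theta^{1/2}r^{1/3})/0.01\le 10^{-2}$.

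Next I would remove the conditioning on $\ce_4$ by FKG on the i.i.d.\ product space $\Omega_\theta$. The event $\{i\text{ not good}\}$ is measurable with respect to $\omega$ and is monotone \emph{increasing} in $\omega$: raising any weight outside $R_\theta$ pointwise increases every $T_{u,v}$ (for every realization of the weights inside $R_\theta$), which stochastically enlarges the conditional law of $M_i$ given $\omega$ and hence the conditional tail probability appearing in the definition of ``good''. The event $\ce_4=\ce_2\cap\ce_3$ is monotone \emph{decreasing} in $\omega$, since increasing the weights inside $U_1\cup U_2$ only increases $T^{U_1}_{u,u'}$ and $T^{U_2}_{u,u'}$, violating the required upper bounds. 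FKG then yields $\P(\{i\text{ not good}\}\cap \ce_4)\le \P(i\text{ not good})\,\P(\ce_4)$, and so
$$\P(i\text{ not good}\mid\ce_4)\le \P(i\text{ not good})\le 10^{-2}.$$

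Finally, write $N:=\#\{i\in [\tfrac{1}{5}\theta^{-3/2}]:i\text{ not good}\}$. By linearity $\E[N\mid\ce_4]\le 10^{-2}\cdot[\tfrac{1}{5}\theta^{-3/2}]$, and Markov gives $\P(N>\tfrac{1}{2}[\tfrac{1}{5}\theta^{-3/2}]\mid\ce_4)\le 0.02\le 0.1$, which is precisely $\P(\ce_8\mid\ce_4)\ge 0.9$. The only real obstacle is locating an appendix estimate that is sharp enough at the on-scale $\theta^{1/2}r^{1/3}$ for the joint supremum over \emph{both} endpoints of $T_{u,v}$; if the cited Proposition \ref{t:treesup} is only stated for a one-endpoint supremum, a short discretization/union-bound argument over a polynomial net for the second endpoint (with Lipschitz correction using the local weight fluctuations) repairs this at the cost of a worse constant in $\alpha$, which does not affect the conclusion.
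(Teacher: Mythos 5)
Your argument is correct and essentially reproduces the paper's proof: both note that the "bad index" indicator (hence the count/fraction of bad indices) is increasing in the configuration outside $R_\theta$ while $\ce_4$ is decreasing, so FKG removes the conditioning; both then bound the unconditional bad-index probability using Theorem \ref{t:moddev} and Proposition \ref{t:treesup} (after an implicit Markov step to pass from the unconditional tail to the conditional-probability criterion in the definition of "good"), and finish with Markov applied to the count. Your closing worry is unfounded: Proposition \ref{t:treesup} already bounds the supremum over pairs $(u,v)\in\mathcal{S}(U)$, so no extra discretization of the second endpoint is needed.
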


\begin{proof}
Observe that the number of good indices is negative in the configuration of weights outside $R_{\theta}$. Let $N$ denote the fraction of bad indices, and it follows by the FKG inequality that $\E[N\mid \ce_4]\leq \E N \leq 0.001$ for $C^*$ sufficiently large where the final inequality follows from Proposition \ref{t:treesup}. The proof of the lemma is completed using Markov inequality.
\end{proof}

We are now ready to prove Proposition \ref{p:positive}.
\begin{proof}[Proof of Proposition \ref{p:positive}]
Observe that by Markov inequality we have $\P(\ce_5\mid \ce_4)\geq 0.9$ and $\P(\ce_6\mid \ce_4)\geq 0.9$ using Lemma \ref{geodimp} (2) and Lemma \ref{l:e6}. Using Lemma \ref{l:realvar} and Lemma \ref{l:e8} we get $\P(\ce_8\mid \ce_4)\geq 0.9$. Invoking Lemma \ref{l:e1} and the fact that $\ce_1$ is independent of $\ce_4$ implies. $\P(\ce\mid \ce_4)\geq 1/2$. The proof of the proposition is completed by using Lemma \ref{l:e3}.
\end{proof}

It remains to prove Lemma \ref{geodimp} and Lemma \ref{l:realvar}. These are proved in the next two sections respectively.

\section{Barriers force localization}
\label{s:l2}
In this section we prove Lemma \ref{geodimp}. We need some new notation. By definition $R_{\phi}$ denotes the union of $R_{\theta}$, $U_1$ and $U_2$. Let $u_0, u_1, \ldots, u_{L}$ and $v_0, v_1,\ldots, v_{L}$ denote points on the two longer boundaries of $R_{\theta}$ equally spaced at distance $\frac{r}{L}$. Let the sub-rectangles defined by them be labeled $S_1,S_2,\ldots S_{L},$ (see Figure \ref{barrier1}).  
\begin{figure}[h]
\includegraphics[width=.90\textwidth]{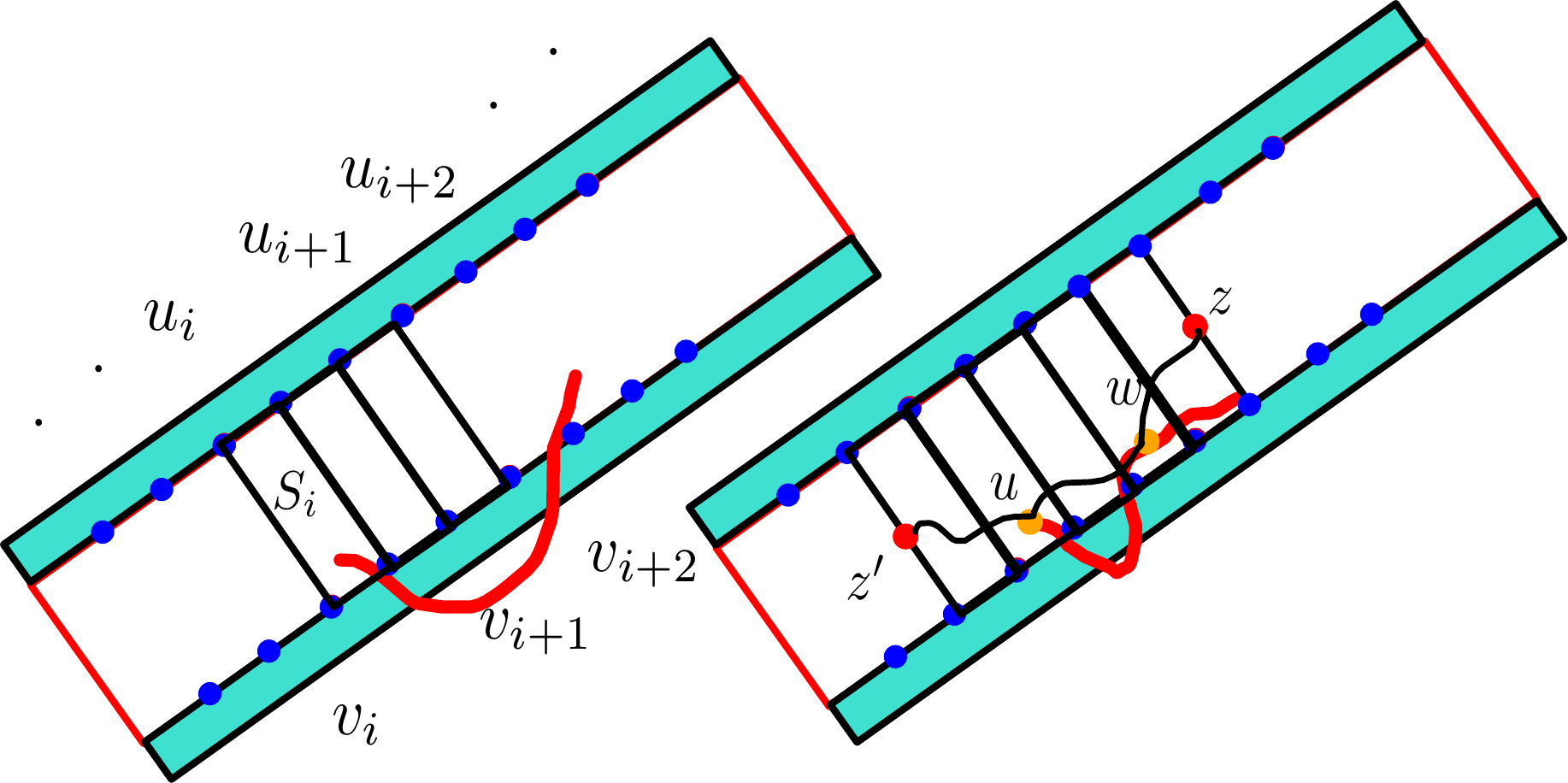} 
\caption{{Figure illustrating the proofs of Lemma \ref{l:123} that the events ${\rm{Int}}$, ${\rm{NoTF}}$ hold with high probability  conditioned on the barrier event $\ce_4.$ The blue regions denote the rectangles $U_1$ and $U_2$ in the definition of $\ce_4.$ The two figures consider the possibilities of the path $\Gamma^*$ wandering outside $R_{2\theta}:$ (a) having a long excursion in the barrier region, (b) causing high transversal fluctuation for polymers between consecutive visits to the $S_i's$. The second case is ruled out by comparing the length of the path with high transversal fluctuation with the path that stays inside the strip and showing that the latter is longer contradicting that the fluctuating path is in fact a geodesic.}}
\label{barrier1}
\end{figure}
 Also for any rectangle $S$, let us denote by $\mathcal{S}(S)$ all pairs of points $(u,v)\in S^2$ such that $u\preceq v$ and the slope of the line joining $u$ and $v$ is between $1/2$ and $2$. 
 
Let us now define the event which essentially ensures that all the lengths of the polymers constrained to lie in $R_{\theta}$ are reasonably typical i.e., none of them is too small. Note that this is independent of $\cE_4.$
\begin{align*}
\cA_1&:=\{T_{u,v}^{R_{\theta}}-\E T_{u,v} \geq -\theta^{-1}\log^{10}\theta^{-1} r^{1/3}\,\, \forall (u,v)\in \mathcal{S}(R_{\theta})\},
\end{align*}
We now define the following desirable events for the path $\Gamma^*$. 
The first one ensures that the polymer intersects all the $S_i$ until very close to the line $\L_r.$
\begin{equation}\label{def213}
{\rm{Int}}:=\left\{\Gamma^* \text{ intersects } S_i \text{ for all } i\ge 1 \text { such that } S_i \subset \{x+y \le 2r-2\theta^{3/2}r\}  \right\}.
\end{equation}
For convenience let the maximum $i$ satisfying the above constraint be denoted $i_0.$
The next one ensures that $\Gamma^*$ does not venture out of $R_{2\theta}$ until after going beyond the line $\{x+y \le 2r-2\theta^{3/2}r\}$
\begin{equation}\label{def214}
{\rm{NoTF}}:=\left\{\Gamma^* \text{ does not intersect } \{x+y \le 2r-2\theta^{3/2}r\} \cap R_{2\theta}^c \right\}.
\end{equation}

We now have the following lemma. 

\begin{lemma} $\P(\cA_1\mid \ce_4)=\P(\cA_1)\ge 1-e^{\log^{2}(\frac{1}{\theta})}.$
\end{lemma}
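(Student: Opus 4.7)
The lemma makes two assertions, and I would treat them separately. The equality $\P(\cA_1\mid \ce_4)=\P(\cA_1)$ is an immediate structural observation: $\cA_1$ is determined by the weights $\{\omega_v:v\in R_\theta\}$, whereas $\ce_4=\ce_2\cap\ce_3$ is, by construction, measurable with respect to the weights in $U_1\cup U_2$. Since the rectangle $R_\theta$ is disjoint from $U_1\cup U_2$, independence (and hence the asserted equality of probabilities) follows.

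The substantive content is therefore to show $\P(\cA_1)\ge 1-e^{-\log^{2}(1/\theta)}$. The plan is a standard union bound after a careful discretization. I would first observe that it suffices to take the union over pairs of lattice points $(u,v)\in \mathcal{S}(R_\theta)$ with $d(v)-d(u)$ ranging over dyadic scales $2^k$ for $k$ up to $\log_2 r$; the polynomially-many pairs (say $O(r^4)$) lose only a factor absorbed by the stretched exponential we will produce. For a single pair $(u,v)$ with $\ell:=d(v)-d(u)$ and slope in $[1/2,2]$, the heart of the matter is the lower-tail estimate
\[
\P\bigl(T^{R_\theta}_{u,v}-\E T_{u,v}\le -z\,\ell^{1/3}\bigr)\le e^{-c z^{a}}
\]
valid for $z$ in a suitable range, coming from the constrained-passage-time bound Proposition~\ref{t:treebox} (applied inside the thin rectangle $R_\theta$, accounting for the typical $\Theta(\theta^{-1})$ shift of the constrained mean from the unconstrained one along with the usual lower-tail decay). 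Applying this with $z=\theta^{-1}\log^{10}(\theta^{-1}) (r/\ell)^{1/3}$, so that $z\,\ell^{1/3}=\theta^{-1}\log^{10}(\theta^{-1})r^{1/3}$, produces a bound of the form $\exp(-c\log^{10a}(\theta^{-1}))$ for each pair, which is super-polynomially small in $\theta^{-1}$ and easily dominates the polynomial $r^{O(1)}$ number of pairs after the union bound.

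The one step I expect to require a little care is making the single-pair lower-tail estimate uniform over pairs at \emph{all} scales $\ell\in [1,r]$ inside $R_\theta$. For pairs at macroscopic scale $\ell=\Theta(r)$, the constrained mean is genuinely below the unconstrained one by order $\theta^{-1}r^{1/3}$, so we really need the tail beyond that shift; this is exactly what Proposition~\ref{t:treebox} provides (the same ingredient used in Lemma~\ref{l:4th2} to control $X_\theta$). For pairs at small scale, the constrained mean essentially agrees with the unconstrained mean, so the desired tail is a direct moderate-deviation bound coming from Theorem~\ref{t:moddev}. In both regimes, the threshold $\theta^{-1}\log^{10}(\theta^{-1})r^{1/3}$ is far into the tail relative to the natural fluctuation scale $\ell^{1/3}\le r^{1/3}$, and the exponent $\log^{10a}(\theta^{-1})$ of the resulting stretched exponential is more than enough to accommodate the $O(\log r)$ loss from the union bound (valid because $r\le n$ and we are in a regime where $r$ is not astronomically larger than $\theta^{-1}$). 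The final bound $\P(\cA_1^c)\le e^{-\log^{2}(1/\theta)}$ then follows.
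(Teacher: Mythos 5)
The independence part of your argument is correct and matches the paper's reasoning: $\cA_1$ is measurable with respect to the weights on $R_\theta$, while $\ce_4=\ce_2\cap\ce_3$ is measurable with respect to the weights on $U_1\cup U_2$, and these regions are disjoint.

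The probability bound, however, has a genuine gap. Your plan is a union bound over $O(r^4)$ pairs of lattice points, with a per-pair lower-tail bound coming from Proposition~\ref{t:treebox}. The per-pair failure probability you produce is of the form $e^{-c\log^{10a}(\theta^{-1})}$ (possibly improved for pairs at small scale $\ell$, but at macroscopic scale $\ell=\Theta(r)$ it is genuinely just a function of $\theta$). That quantity does not depend on $r$, so it cannot absorb a polynomial-in-$r$ number of pairs. You notice this yourself and hedge with ``we are in a regime where $r$ is not astronomically larger than $\theta^{-1}$'', but that is not true here: $\theta$ is a fixed small constant chosen at the start, and $r$ is allowed to grow without bound (up to $\delta_1 n$), so $r$ can be arbitrarily large relative to $\theta^{-1}$. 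As written, the union bound fails.

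The fix, which is what the paper is invoking, is to avoid a per-pair union bound entirely by exploiting the fact that Proposition~\ref{t:treebox} is already a \emph{uniform-over-pairs} statement: a single application to an on-scale rectangle controls the infimum of $T^U_{u,v}-2|d(u)-d(v)|$ over all of $\sS(U)$ simultaneously, with failure probability $e^{-c_1 t^{1/3}}$ independent of $r$. Since $R_\theta$ is an $r\times \theta r^{2/3}=r\times(\theta^{3/2}r)^{2/3}$ rectangle, it is below scale for length $r$, so you cannot apply the proposition to $R_\theta$ directly. Instead, decompose $R_\theta$ into the $\theta^{-3/2}$ on-scale blocks $R_\theta^i$ (each of size $\theta^{3/2}r\times (\theta^{3/2}r)^{2/3}$), apply Proposition~\ref{t:treebox} in each block with $t=\log^{10}(\theta^{-1})$ to get failure probability $e^{-c\log^{10/3}(\theta^{-1})}$ \emph{uniformly over pairs in that block}, and union bound over the $\theta^{-3/2}$ blocks (a number depending only on $\theta$, not on $r$). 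For a pair $(u,v)\in\sS(R_\theta)$ spanning several blocks, stitch the per-block bounds together along intermediate boundary points; the accumulated deficit over at most $\theta^{-3/2}$ blocks is at most $\theta^{-3/2}\cdot\log^{10}(\theta^{-1})\theta^{1/2}r^{1/3}=\theta^{-1}\log^{10}(\theta^{-1})r^{1/3}$, which is exactly the threshold in $\cA_1$ (after passing between the centerings $2|d(u)-d(v)|$ and $\E T_{u,v}$, which differ by $O(r^{1/3})$). This gives the $r$-independent failure bound $\theta^{-3/2}e^{-c\log^{10/3}(\theta^{-1})}\le e^{-\log^2(\theta^{-1})}$ that the lemma asserts.
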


\begin{proof}This follows from Proposition \ref{t:treebox} and independence of $\cA_1$ and $\cE_4.$
\end{proof}

\begin{lemma}
\label{l:123} We have the two following probability bounds:
\begin{enumerate}
\item $\P({\rm Int}^c\cap \cA_1\mid \ce_4)\le e^{-\frac{1}{\theta^2}}.$
\item $\P({\rm NoTF}^c\cap {\rm Int}\cap \cA_1\mid \ce_4)\le e^{-\frac{1}{\theta^2}}.$
\end{enumerate}
\end{lemma}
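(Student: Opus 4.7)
My plan is to derive both bounds by contradicting the maximality of $\Gamma^*$: whenever the bad event in question occurs together with $\cA_1$, the alternative path that reroutes $\Gamma^*$ entirely through $R_\theta$ strictly dominates it. The two opposing ingredients are $\ce_4$, which on any $U_j$-confined sub-path with endpoints $p,q$ satisfying $|d(p)-d(q)|\ge r/L$ forces the upper bound $T^{U_j}_{p,q}\le \E T_{p,q}-Lr^{1/3}$, and $\cA_1$, which for $(p,q)\in \mathcal{S}(R_\theta)$ gives the matching lower bound $T^{R_\theta}_{p,q}\ge \E T_{p,q}-\theta^{-1}\log^{10}(\theta^{-1})r^{1/3}$. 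Since the parameter choices of Section \ref{s:parameters} enforce $L=\theta^{-500}\gg \theta^{-1}\log^{10}(\theta^{-1})$, the replacement gains at least $\tfrac12 Lr^{1/3}$; this single mechanism drives both parts.

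For part (1), I would assume ${\rm Int}^c$ holds, so $\Gamma^*$ misses some $S_i$ with $i\le i_0$. Because $S_i$ sits strictly below $\L_r$ in the anti-diagonal direction, the polymer must exit $R_\theta$ into some $U_j$ before the lower bounding anti-diagonal of $S_i$ and either re-enter $R_\theta$ beyond the upper bounding anti-diagonal of $S_i$, or exit through the top edge of $U_j$ directly onto $\L_r$. In either sub-case I extract a sub-segment of $\Gamma^*$ lying entirely in $U_j$ whose endpoints $p,q$ satisfy $|d(p)-d(q)|\ge r/L$; applying $\ce_4$ bounds its weight above and substituting the best path from $p$ to $q$ inside $R_\theta$, which is controlled below by $\cA_1$, produces a valid alternative path from $\mathbf{0}$ to $\L_r$ of strictly greater weight, contradicting the maximality of $\Gamma^*$. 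The residual $e^{-1/\theta^2}$ absorbs the rare configurations in which $(p,q)$ narrowly fails the slope condition defining $\mathcal{S}(R_\theta)$; these are controlled by discretising the boundary of $R_\theta$ into $O(\mathrm{poly}(\theta^{-1}))$ candidate entry/exit cells and union-bounding against an auxiliary local failure event.

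For part (2), I would condition on ${\rm Int}\cap\cA_1\cap\ce_4$ and assume $\Gamma^*$ leaves $R_{2\theta}$ before the anti-diagonal $\{x+y=2r-2\theta^{3/2}r\}$. Since every $S_i$ with $i\le i_0$ is visited, the excursion outside $R_{2\theta}$ is sandwiched between visits to some $S_i$ and $S_{i+k}$ with $k\ge 1$, all routed through a single $U_j$ by monotonicity of directed paths. Let $p,q$ denote respectively the last exit of $\Gamma^*$ from $R_\theta$ before and the first re-entry after this excursion; the sub-segment from $p$ to $q$ lies in $U_j$ and, since it reaches outside $R_{2\theta}$, has transversal span at least $\theta r^{2/3}$. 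When $|d(p)-d(q)|\ge r/L$ the part~(1) argument applies verbatim. When $|d(p)-d(q)|<r/L$ the sub-segment has anti-diagonal span below $r/L$ yet transversal span $\ge \theta r^{2/3}\gg (r/L)^{2/3}$ (the natural transversal scale at that anti-diagonal length, given $L=\theta^{-500}$); Proposition \ref{p:tfnew} applied at the sub-polymer scale $r/L$ then shows that the probability of any competitive path with such anomalous geometry existing is at most $e^{-1/\theta^2}$, off which $\cA_1$ again yields a dominating $R_\theta$-alternative.

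The main obstacle is the short anti-diagonal span sub-case of part (2): it lies outside the reach of $\ce_4$ and instead requires the below-scale transversal fluctuation input of Proposition \ref{p:tfnew} applied at length $r/L$ rather than the full polymer length $r$, with careful verification that the moderate deviation bounds there remain quantitatively effective at this shorter scale under the choice $L=\theta^{-500}$. A secondary technicality is guaranteeing that every comparison pair $(p,q)$ fed into $\cA_1$ belongs to $\mathcal{S}(R_\theta)$; this is handled by a boundary discretisation of $R_\theta$ on the $O(1)$ lattice scale and a union bound whose cost is comfortably dominated by $e^{-1/\theta^2}$ for the chosen parameters.
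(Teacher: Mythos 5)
Your core mechanism — playing the barrier bound from $\ce_4$ against the lower bound from $\cA_1$, with $L=\theta^{-500}\gg\theta^{-1}\log^{10}(\theta^{-1})$ driving the contradiction — is exactly the paper's. But there is a genuine gap in part (1). You assert that on ${\rm Int}^c$ the excursion of $\Gamma^*$ outside $R_\theta$ lies in a single $U_j$, so that $\ce_4$ can be invoked. Nothing you have said controls the possibility that $\Gamma^*$ leaves $R_\phi=R_\theta\cup U_1\cup U_2$ entirely through the outer long side of $U_j$, in which case $\ce_4$ says nothing about that segment. The paper handles this by first restricting to $(\cB^{\phi_1})^c\cap\cC^c_{\phi_1}$ via Lemmas \ref{macrolocal} and \ref{TFlocal}, which forces $\Gamma^*\subset R_\phi$ up to the claimed $e^{-1/\theta^2}$ failure probability; only then does it extract the sub-segment inside $U_j$ and apply $\ce_4$. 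This localization is where the $e^{-1/\theta^2}$ actually comes from — not from the boundary discretisation/slope issue you propose to absorb into it, which is minor by comparison. Without this step your part (1) does not close.

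For part (2) your instinct is right — Proposition \ref{p:tfnew} at the sub-polymer scale $r/L$ is the essential input — but your case split on $|d(p)-d(q)|$ versus $r/L$ does not match the geometry well: since ${\rm Int}$ forces $\Gamma^*$ to visit every $S_i$, the out-of-$R_\theta$ excursion from last exit $p$ to first re-entry $q$ always has anti-diagonal span $O(r/L)$, so your "long-span" branch is essentially vacuous and the whole burden falls on the transversal-fluctuation input anyway. You also leave the key comparison implicit: having a geodesic sub-segment with anomalous transversal fluctuation is not by itself a contradiction; one must exhibit a competing path. The paper does this cleanly by picking $u\in S_i$, $w\in S_{i+2}$, extending to fixed anchors $z'\in S_{i-1}$ and $z\in S_{i+3}$ on the centerline of $R_\theta$ (which automatically gives well-sloped pairs for $\cA_1$, defusing your secondary technicality), and comparing $\gamma$ (all inside $R_\theta$, bounded below via $\cA_1$ and Theorem \ref{t:moddev}) with $\gamma'$ (containing $\Gamma^*_{u,w}$, bounded above via Proposition \ref{p:tfnew} on the rectangle $S_*$ of length $5r/L$). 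The inequality $\ell(\gamma')\ge\ell(\gamma)$, which holds since $\Gamma^*_{u,w}$ is the unconstrained geodesic between $u$ and $w$, then yields the contradiction. I'd encourage you to make that comparison construction explicit; as written, "$\cA_1$ again yields a dominating $R_\theta$-alternative" is where the actual argument needs to be.
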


\begin{proof}
We first prove (1).
By Lemmas \ref{macrolocal} and \ref{TFlocal} we can assume that the complements of the events $\cB^{\phi_1}$ and $\cC_{\phi_1}$ hold, which in particular imply that $\Gamma^*$ stays constrained within $R_{\phi}.$ 
Now on the event ${\rm Int}^c$ there exists vertices $u, w$ on the polymer $\Gamma^*$ such that $u \in R_\theta$ and $w$ is the next point on $\Gamma^*\cap R_{\theta}$ if such a point exists or $w=v^*$ which is the end point of $\Gamma^*.$ Moreover $|u-w|\ge \frac{r}{L}$ and  $\Gamma_{u,w}$ lies entirely outside $R_{\theta}.$
Now by definition of $\cA_1$ and $\cE_4$ it follows that in the former case (when $w\neq v^*$) $$T^{R_{\theta}}_{u,w}\ge \ell (\Gamma^*_{u,w})$$ where $\Gamma^*_{u,w}$ denotes the segment of $\Gamma^*$ between $u$ and $w$ (which is the same as $\Gamma_{u,w}$) which is a contradiction. When $w=v^*$ a similar consideration shows $T^{R_{\theta}}_{u,{\bf{r}}}\ge \ell (\Gamma^*_{u,w})$ again leading to a contradiction.

We now prove (2). Now by definition on $\rm {Int},$ the path  $\Gamma^*$ intersects $S_i$ for all $i\le i_0$ (where $i_0$ appears in \eqref{def213}.)
Thus on the complement of $\rm{NoTF},$ there  must exist $i\le i_0$ and vertices $u$ and $w$ on $\Gamma^*$ where $u\in S_i, w\in S_{i+2}$ and the geodesic $\Gamma_{u,w}$ which is a segment of $\Gamma^*$ exits $R_{2\theta}$ and hence has unusual transversal fluctuation. We will now invoke Proposition \ref{p:tfnew} to rule this out. However note that Proposition \ref{p:tfnew} is for a fixed rectangle and considers polymers going from one side to the other side whereas in our case the points $u,w$ lie somewhere in the rectangles $S_{i}$ and $S_{i+2}.$ 
Thus to apply Proposition \ref{p:tfnew} consider the new rectangle $S_*=S_{i-1}\cup S_{i}\cup S_{i+1} \cup S_{i+2} \cup S_{i+3}.$
Let $z'$ be the midpoint of the bottom  side of $S_{i-1}$ and 
 $z$ be the midpoint of the top side of $S_{i+3},$ (see Figure \ref{barrier1}).
Now consider the path $\gamma$ obtained by concatenation of  $\Gamma^{R_{\theta}}_{z',u},\Gamma^{R_{\theta}}_{u,w}, \Gamma^{R_{\theta}}_{w,z}.$
Similarly consider the path $\gamma'$ obtained by concatenation of  $\Gamma^{R_{\theta}}_{z',u},\Gamma^{*}_{u,w}, \Gamma^{R_{\theta}}_{w,z}$, where $\Gamma^*_{u,w}$ is the restriction of $\Gamma^*$ between $u$ and $w$. Since $\Gamma^{*}_{u,w}$ is in fact $\Gamma_{u,w},$ it follows that $\ell(\gamma')\ge \ell(\gamma)$. Now by Proposition \ref{p:tfnew} and our choice of parameters except for an event of probability at most $e^{-\frac{1}{\theta^2}}$, uniformly over all $i$ as above, $\ell(\gamma')\le 4 (\frac{5r}{L})-\theta^{-20}r^{1/3}.$
Moreover, note that by choice of the points $z',u,w,z,$ the pairs $(z',u)$, $(u,w)$, and $(w,z)$ are all in $\sS(S_*)$. Observe also that using Theorem \ref{t:moddev} we have $\E T_{w,u}+ \E T_{u,v}+ \E T_{v,z}\geq 4(\frac{5r}{L})-cr^{1/3}$ for some constant $c>0$. Thus the event $\cA_1$ implies that  
$\ell(\gamma)\ge 4 (\frac{5r}{L})-\theta^{-2}r^{1/3}$ contradicting  $\ell(\gamma')\ge \ell(\gamma)$ with failure probability at most $e^{-\frac{1}{\theta^2}}$. This completes the proof. 
\end{proof}

We are now ready to prove Lemma \ref{geodimp}.
\begin{proof}[Proof of Lemma \ref{geodimp}] We first prove (1). Let $\gamma$ from $\mathbf{0}$ to $w\in \L_{r,\phi_1}\setminus \L_{r,\phi_0}$ denote the path attaining the supremum in the definition of the event $\cD$. The same argument as above shows that $\gamma$ intersects every $S_i$ for all $i\le i_0$ and does not exit $R_{{2\theta}}$ until after intersecting $S_{i_0}$ (say at $v_0$) with very small failure probability. Note that on this event $$\ell(\gamma)-X_*\le T_{v_0,w}-T_{v_0,\mathbf{r}}^{R_{\theta}}\le \sup_{v_0\in S_{i_0}, w \in \L_{r,\phi_1}\setminus \L_{r,\phi_0}}[T_{v_0,w}-T_{v_0,\mathbf{r}}^{R_{\theta}}].$$ The last term is now bounded by the FKG inequality, Propositions \ref{t:treesup} and \ref{t:treebox} we are done, by noticing that $\E T_{v_0,w}\leq  2(2r-d(v_0))-\frac{(w_1-w_2)^2}{50\theta^{3/2}r}$.

(2) follows by  a similar argument. Consider the nice event 
$$A:=\cE_1 \cap \cD^c \cap  (\cB^{\phi_1})^c \cap (\cA_{\rm loc}^{\phi_1})^{c} \cap  \cC^{c}\cap  \cA_1 \cap {\rm{Int}} \cap {\rm{NoTF}}$$
with super polynomially small (in $\theta$) failure probability conditional on $\ce_4$. On $A$, let $v_0$ be the point in $S_{i_0}$ as in the previous part. Let $T^*_{v_0}$ denote the length of the best path from $v_0$ to the line $x+y=2r$. It follows that, on $A$, $X^*-X_*\leq T^*_{v_{0}}-T_{v_{0},\mathbf{r}}^{R_{\theta}}$. 
Thus $$\E[(X^*-X_*)^2\mid \ce_4]=\E[(X^*-X_*)^2 1_{A}\mid \ce_4]+\E[(X^*-X_*)^2 1_{A^c}\mid \ce_4].$$ The first one is dominated by $\E[\sup_{v\in S_{i_0}}(T^*_{v_0}-T_{v_{0},\mathbf{r}}^{R_{\theta}})^2\mid \ce_4]$ which we show to be $O(\theta r^{2/3})$ by Lemma \ref{l:sideregularity}, and the FKG inequality. The second one is bounded by a simple application of Cauchy-Schwarz inequality, the super polynomial bound ($e^{-\log^2{\theta}}$) of $\P(A^c\mid \ce_{4})$, and a bound on the fourth moment of $|X^*-X_*|$ which is proved in the next lemma.  
\end{proof}

\begin{lemma}
\label{l:4th1}
We have for $\theta$ sufficiently small 
$$ \E[ (X^*-X_{*})^4 \mid \ce_4] = O(\theta^{-4} r^{4/3}).$$
\end{lemma}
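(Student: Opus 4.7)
The plan is to reduce the conditional fourth moment to an unconditional one via FKG, and then obtain the unconditional fourth moment by decomposing through the unconstrained point-to-point weight. First I would replace $X_*$ by the more tractable $X_\theta$: since $R_\theta \subset R_{2\theta}$, every path feasible for $X_\theta$ is feasible for $X_*$, so $X_\theta \leq X_*$, and one has the chain $X_\theta \leq X_* \leq X \leq X^*$ where $X := T_{\mathbf{0},\mathbf{r}}$. Hence $0 \leq X^* - X_* \leq X^* - X_\theta$, and taking fourth powers,
\[
(X^* - X_*)^4 \leq (X^* - X_\theta)^4.
\]
So it suffices to show $\E[(X^* - X_\theta)^4 \mid \ce_4] = O(\theta^{-4} r^{4/3})$.

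The next step is to remove the conditioning by an FKG argument. The event $\ce_4$ is measurable with respect to the weights in $U_1 \cup U_2$ and is a decreasing event in them, since it requires certain constrained passage times inside $U_1$ and $U_2$ to be small. On the other hand, $X_\theta$ depends only on weights inside $R_\theta$, and $R_\theta$ is (up to a negligible boundary set of weights, handled by the convention already adopted in the paper) disjoint from $U_1 \cup U_2$. Therefore, conditional on the weights outside $U_1 \cup U_2$, $X_\theta$ is a constant, while $X^*$ is coordinatewise increasing in the remaining weights; hence $(X^* - X_\theta)^4$ is a nonnegative increasing function of the weights in $U_1 \cup U_2$. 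Applying FKG conditional on the weights outside $U_1 \cup U_2$, and using that $\ce_4$ is independent of those outside weights, yields
\[
\E[(X^* - X_\theta)^4 \mid \ce_4] \leq \E[(X^* - X_\theta)^4].
\]

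Finally, I would bound the unconditional fourth moment by splitting $X^* - X_\theta = (X^* - X) + (X - X_\theta)$. Both summands are nonnegative, so $(X^* - X_\theta)^4 \leq 8(X^* - X)^4 + 8(X - X_\theta)^4$. The first summand has a stretched exponential tail at scale $r^{1/3}$ by Lemma \ref{l:p2l}, which upon integration gives $\E[(X^* - X)^4] = O(r^{4/3})$. The second summand is controlled by Lemma \ref{l:4th2}: restricting the polymer to a strip of width $\theta r^{2/3}$ costs an amount of order $\theta^{-1} r^{1/3}$ with sub-exponential tails, so $\E[(X - X_\theta)^4] = O(\theta^{-4} r^{4/3})$. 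Combining these two bounds yields $\E[(X^* - X_\theta)^4] = O(\theta^{-4} r^{4/3})$, and hence the claim.

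The main point requiring care is the FKG step; one must confirm both the monotonicity direction of $(X^* - X_\theta)^4$ in the barrier weights and the (effective) independence of $X_\theta$ from those weights. The remaining ingredients—tails for $X^* - X$ and for $X - X_\theta$—are inputs already developed elsewhere in the paper, and once the FKG reduction is in place the fourth moment estimate is an immediate consequence.
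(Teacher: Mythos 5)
Your proof is correct and follows the paper's argument in its essentials: reduce to the unconditional moment by FKG (using that $\ce_4$ is decreasing in the $U_1 \cup U_2$ weights while $(X^* - X_\theta)^4$ is increasing and $X_\theta \leq X_*$), then split $X^* - X_\theta = (X^* - X) + (X - X_\theta)$ and apply Lemma~\ref{l:p2l} and Lemma~\ref{l:4th2}. The only thing you leave slightly implicit is that bounding $\E[(X - X_\theta)^4]$ requires re-centering at $4r$, i.e.\ $\E[(X - X_\theta)^4] = O(\E[(X - 4r)^4] + \E[(X_\theta - 4r)^4])$, with the first term controlled by Theorem~\ref{t:moddev}; the paper records this intermediate step explicitly, but the substance is the same.
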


\begin{proof}
Observe that by the FKG inequality it suffices to prove 
$$\E[X^*-X_{\theta}]^4 = O(\theta^{-4} r^{4/3}).$$
This is done by controlling $\E[X^*-X]^4$ by Lemma \ref{l:p2l},  and observing that $\E[X-X_{\theta}]^4= O(\E[X-4r]^4+\E[X_{\theta}-4r]^4)$ and using Theorem \ref{t:moddev} and Lemma \ref{l:4th2}.
\end{proof}

\section{Variance lower bounds for constrained polymer}
\label{s:var}

It remains to prove Lemma \ref{l:realvar}. Clearly it immediately follows from the following proposition.

\begin{proposition}
\label{p:variance}
There exists $c_0>0$ depending on $C^*$ such that for each $\omega\in \ce_8$, we have ${\rm Var}~(X_*\mid \omega)\geq c_0\theta^{-1/2} r^{2/3}$. 
\end{proposition}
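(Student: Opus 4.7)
The plan is to decompose $X_*$ block-by-block using conditionally independent reference passage times and apply Cauchy--Schwarz to harvest contributions from the good blocks. For each $i \in [N]$ with $N := \lfloor \theta^{-3/2}/5 \rfloor$, I would set $p_i := (5i\theta^{3/2}r, 5i\theta^{3/2}r)$ on the main diagonal on the bottom boundary of the $i$-th block $\cup_{j=0}^{4} R^{5i+j}_{2\theta}$, and define $T^{(i)} := T^{R_{2\theta}}_{p_i, p_{i+1}}$. Consecutive blocks depend on pairwise disjoint random strips of $R_{\theta}$, so the family $\{T^{(i)}\}$ is conditionally independent given $\omega$, and the standard on-scale variance bound (via Proposition \ref{l:variancelb}, at lengthscale $5\theta^{3/2}r$) gives $c_{\text{LPP}}\theta r^{2/3} \leq \Var(T^{(i)}\mid\omega) \leq C\theta r^{2/3}$ uniformly. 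Writing $X_* = A^{p_i} + T^{(i)} + B^{p_{i+1}} + W_i$ with $W_i \geq 0$, where $A^{p_i}$ and $B^{p_{i+1}}$ are the best $R_{2\theta}$-restricted weights from $\mathbf{0}$ to $p_i$ and from $p_{i+1}$ to $\mathbf{r}$ respectively, these latter terms are measurable with respect to blocks strictly below $p_i$ and above $p_{i+1}$, hence independent of $T^{(i)}$ given $\omega$. Consequently ${\rm Cov}(X_*, T^{(i)}\mid\omega) = \Var(T^{(i)}\mid\omega) + {\rm Cov}(W_i, T^{(i)}\mid\omega)$, and the Cauchy--Schwarz inequality applied to $(X_*, \sum_i T^{(i)})$ yields
\[
\Var(X_*\mid\omega) \geq \frac{\bigl(\sum_i{\rm Cov}(X_*,T^{(i)}\mid\omega)\bigr)^2}{\sum_i\Var(T^{(i)}\mid\omega)}.
\]
This reduces the proposition to the uniform per-index estimate $\Var(W_i\mid\omega) \leq \alpha \Var(T^{(i)}\mid\omega)$ for some $\alpha<1$ and good $i$.

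The key variance estimate on $W_i$ exploits the good condition. Starting from the explicit expression
\[
W_i = \max_{\substack{u \in R^{5i}_{2\theta}\\ v \in R^{5i+4}_{2\theta}}} \left[(A^u - A^{p_i}) + (T^{(i)}_{u,v} - T^{(i)}_{p_i, p_{i+1}}) + (B^v - B^{p_{i+1}})\right]^+,
\]
I would control the three pieces separately. On the good event (conditional probability $\geq 0.99$), the upper bound $T^{(i)}_{u,v} \leq 2|d(u) - d(v)| + C^*\theta^{1/2}r^{1/3}$ combined with the moderate-deviation lower bound $T^{(i)}_{p_i, p_{i+1}} \geq 20\theta^{3/2}r - O(\theta^{1/2}r^{1/3})$ from Theorem \ref{t:moddev}, and the elementary bound $d(v) - d(u) \leq 10\theta^{3/2}r$, together give $T^{(i)}_{u,v} - T^{(i)}_{p_i, p_{i+1}} = O(\theta^{1/2}r^{1/3})$. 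The profile terms $\sup_u|A^u - A^{p_i}|$ and $\sup_v|B^v - B^{p_{i+1}}|$ are each $O(\theta^{1/2}r^{1/3})$ by Theorem \ref{t:aest} applied at the transverse scale $s \asymp \theta r^{2/3}$, with stretched-exponential failure probability. Combining these inputs gives $W_i \leq C\theta^{1/2}r^{1/3}$ on a conditional-probability $1 - o(1)$ event. The tail contribution to $E[W_i^2\mid\omega]$ is handled by Cauchy--Schwarz against $E[W_i^4\mid\omega] \leq C\theta^2r^{4/3}$, itself obtained by integrating the stretched-exponential tails of the three pieces. The net result is $\Var(W_i\mid\omega) \leq C'\theta r^{2/3}$ uniformly.

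The main obstacle is the quantitative comparison of constants: one must establish $C' < c_{\text{LPP}}$ so that ${\rm Cov}(X_*, T^{(i)}\mid\omega) \geq \bigl(1 - \sqrt{C'/c_{\text{LPP}}}\bigr)\Var(T^{(i)}\mid\omega)$ is bounded strictly away from zero. Since $C'$ depends on $C^*$ through the deterministic bound on $T^{(i)}_{u,v}$ in the good event, careful tracking of constants is needed. Fortunately, Lemma \ref{l:realvar} only asks for $c_0$ small as a function of $C^*$, so absorbing a loss into the final constant $c_0(C^*)$ is acceptable; if sharper control is needed one could replace the fixed $p_i$ by an entry/exit pair optimizing the leading-order profile $A^u + 2|d(u)-d(v)| + B^v$ (which is $\omega$-measurable), thereby eliminating the $A, B$ contributions to $W_i$ up to $O(\theta^{1/2}r^{1/3})$ from Theorem \ref{t:aest}. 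Once the per-index bound ${\rm Cov}(X_*, T^{(i)}\mid\omega) \geq c(C^*)\theta r^{2/3}$ for good $i$ is established, summing over the $\geq N/2$ good indices gives $\sum_i {\rm Cov}(X_*, T^{(i)}\mid\omega) \geq c(C^*)\, N\theta r^{2/3}$, which, combined with $\sum_i \Var(T^{(i)}\mid\omega) \leq C N\theta r^{2/3}$, yields $\Var(X_*\mid\omega) \geq c_0(C^*)\theta^{-1/2}r^{2/3}$ upon insertion into the Cauchy--Schwarz bound, as desired.
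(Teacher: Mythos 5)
Your proposal takes a genuinely different route from the paper's: the paper runs a Doob-martingale variance decomposition (revealing the weights on a single interior slab $R_{\theta}^{5i_j+2}$ at a time) and lower-bounds each increment via an FKG monotone coupling that resamples the middle slab to a high-local-passage-time configuration (Lemma~\ref{l:eachgood}); you propose a block decomposition by reference passage times $T^{(i)}$ followed by Cauchy--Schwarz. Both strategies aim to extract $\Theta(\theta^{-3/2})$ block contributions of size $\Theta(\theta r^{2/3})$, but your route to a per-block bound has two gaps I do not see how to close.

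First, the lower bound $\Var(T^{(i)}\mid\omega)\geq c_{\mathrm{LPP}}\theta r^{2/3}$ is not supplied by Proposition~\ref{l:variancelb}, which is an unconditional statement. With $T^{(i)}=T^{R_{2\theta}}_{p_i,p_{i+1}}$ one is conditioning on the ring $R_{2\theta}\setminus R_{\theta}$, on which $T^{(i)}$ genuinely depends, and a specific $\omega\in\ce_8$ could in principle destroy most of the fluctuation. Proving such a conditional lower bound for each fixed $\omega$ is, at the smaller scale, exactly the assertion of the proposition, so as written the reduction is circular. This is patchable by using $T^{R_{\theta}}_{p_i,p_{i+1}}$ instead (independent of $\omega$, so conditional and unconditional variances coincide and Proposition~\ref{l:variancelb} applies), and the inequality $X_*\geq A^{p_i}+T^{R_{\theta}}_{p_i,p_{i+1}}+B^{p_{i+1}}$ still holds. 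Second -- and this is the obstacle you already flag -- the per-block covariance bound $\Cov(X_*,T^{(i)}\mid\omega)\geq\Var(T^{(i)}\mid\omega)-\sqrt{\Var(W_i\mid\omega)\Var(T^{(i)}\mid\omega)}$ is only useful if $\Var(W_i\mid\omega)<\Var(T^{(i)}\mid\omega)$. Your estimate gives $\Var(W_i\mid\omega)\leq C'(C^*)\theta r^{2/3}$ with $C'$ growing at least quadratically in $C^*$ (the dominant term in $W_i$ on the good event is the $C^*\theta^{1/2}r^{1/3}$ slack built into the definition of a good index), while $c_{\mathrm{LPP}}$ is a fixed absolute constant. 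The value of $C^*$ is pinned down by Lemma~\ref{l:e8}: it must be large enough that the expected fraction of bad indices is at most $0.001$, so it cannot be taken small, and there is no reason to expect $C'(C^*)<c_{\mathrm{LPP}}$. ``Absorbing a loss into $c_0(C^*)$'' does not rescue this: if the cross term dominates, $\Cov(X_*,T^{(i)}\mid\omega)$ is not merely a small positive number but possibly zero or negative, and then the Cauchy--Schwarz assembly gives nothing. (Optimizing the entry/exit pair to remove the $A,B$ contributions does not help either, both because the optimizer is $\omega$-measurable, re-coupling $T^{(i)}$ to $\omega$, and because the $C^*$-proportional term does not come from the profile pieces.) The paper's proof sidesteps both difficulties precisely by not routing through an intermediate variance lower bound with a competing error: the FKG coupling guarantees $X_*(\underline{\tilde\omega})\geq X_*(\underline\omega)$ pointwise, and the good-index condition together with Proposition~\ref{t:treebox} forces a definite gain of order $C^*\theta^{1/2}r^{1/3}$ on a positive-probability set, so the Doob increment bound is a single guaranteed positive quantity rather than a difference of two terms with uncertain relative size.
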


We shall prove Proposition \ref{p:variance} by a standard Doob Martingale variance decomposition. Let us first fix $\omega\in \ce_8$. Let $J=\{i_1<i_2<\cdots < i_{|J|}\}$ be an enumeration of good indices (observe that $J$ is deterministic given $\omega$) i.e., for each $j\in J$
$$\P\left(\sup_{u\in R_{2\theta}^{5j}, v\in R_{2\theta}^{5j+4}} T_{u,v}- 2|d(u)-d(v)| \leq  C^*\theta^{1/2}r^{1/3}\mid \omega\right) \geq 0.99$$ 
By definition of $\ce_{8}$, we know that $|J|\geq \frac{1}{10}\theta^{-3/2}$. Let us now define a sequence of $\sigma$-fields, $\cg_0 \subseteq \cg_1 \subseteq \cdots \subseteq \cg_{|J|}$. Where $\cg_0$ is generated by the configuration $\omega$ on $\Z^2\setminus R_{\theta}$ together with the configuration on $R_{\theta}^{i}$ for all $i$ not of the form $5j+2$ for some $j\in J$, and for $j\geq 0$, $\cg_{j+1}$ is the sigma algebra generated by $\cg_{j}$ and the configuration on $R_{\theta}^{5i_{j+1}+2}$. 

We shall consider the Doob Martingale $M_j:=\E[X_*\mid \cg_{j}]$. Observe that by the standard variance decomposition of a Doob Martingale it follows that 

\begin{equation}
\label{l:vardecom}
{\rm Var}~(X_*\mid \omega)\geq \sum_{j=1}^{J} \E (M_j-M_{j-1})^2.
\end{equation}

Clearly it suffices to prove the next lemma.

\begin{lemma}
\label{l:eachgood}
There exists $c>0$ such that for each $i_j$ in $J$, there is a subset $A_j$ with probability at least $1/2$ measurable with respect to $\cg_{j-1}$ such that such that on $A_j$ we have
$$\E[(M_j-M_{j-1})^2\mid \cg_{j-1}] \geq c\theta r^{2/3}.$$ 
\end{lemma}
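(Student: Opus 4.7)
I plan to lower bound the conditional variance of the Doob increment by analyzing $h(\omega^{(j)}) := \E[X_* \mid \cg_{j-1}, \omega^{(j)}]$, where $\omega^{(j)}$ denotes the weights on the central sub-stripe $S_* := R_{\theta}^{5i_j+2}$; since $\omega^{(j)}$ is independent of $\cg_{j-1}$, we have $\E[(M_j-M_{j-1})^2 \mid \cg_{j-1}] = \Var_{\omega^{(j)}}(h)$. The structural input is the decomposition of every $R_{2\theta}$-path $\gamma$ from $\mathbf{0}$ to $\mathbf{r}$ as $\ell(\gamma) = P(u) + M_{u,v} + Q(v)$, where $u\in \mathcal{L}_{5i_j}$ and $v\in \mathcal{L}_{5i_j+5}$ are the entry and exit points of the 5-block $B_* := \bigcup_{i=5i_j}^{5i_j+4} R_{2\theta}^i$; here $P(u)$ is $\cg_{j-1}$-measurable, $M_{u,v}$ depends on $\omega^{(j)}$ and $\cg_{j-1}$, and $Q(v)$ is independent of $\omega^{(j)}$ (although not $\cg_{j-1}$-measurable because of the still-unrevealed stripes $R_{\theta}^{5i_k+2}$, $k>j$).

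I would define $A_j$ as the intersection of: (a) the conditional goodness of $i_j$, a $\cg_{j-1}$-measurable event (given $\cg_{j-1}$ the only weights relevant to the 5-block are those on $S_*$) ensuring that $\sup_{u,v}[M_{u,v}-2|d(u)-d(v)|]\le C^*\theta^{1/2}r^{1/3}$ holds with $\cg_{j-1}$-conditional probability $\ge 1/2$, which follows from $\omega\in \ce_8$ and Markov's inequality applied to the $0.99$ in the definition of a good index; (b) a regularity statement for the prefix profile $P(\cdot)|_{\mathcal{L}_{5i_j}}$ modeled on Theorem \ref{t:aest}, forcing the optimal entry $u^*$ into an $O(\theta r^{2/3})$ transversal window around the diagonal; and (c) the analogous regularity for the conditional mean suffix profile $\bar Q(v) := \E[Q(v)\mid \cg_{j-1}]$, obtained by applying Theorem \ref{t:aest} to the time-reversed polymer from $\mathbf{r}$ and then the FKG inequality to transfer statements from $\cg_0$ to $\cg_{j-1}$. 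Each of (a)--(c) should hold with $\cg_0$-conditional probability $\ge 0.99$, so $\P(A_j \mid \cg_0) \ge 1/2$.

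On $A_j$ the lower bound proceeds in two steps. First, I would invoke the variance lower bound $\Var_{\omega^{(j)}}(T^{S_*}_{u,v}) \ge c \theta r^{2/3}$ for the constrained passage time in the thin on-scale rectangle $S_*$ of aspect $\theta^{3/2} r \times \theta r^{2/3}$, valid for any fixed on-scale pair $(u,v)$---this is the variance lower bound Proposition \ref{l:variancelb} referenced in the introduction. Second, I would transfer this variance to $h$ by exhibiting two disjoint subsets $B_1, B_2$ of the law of $\omega^{(j)}$ of positive probability, corresponding to the lower and upper quartiles of the distribution of $T^{S_*}_{u^*, v^*}$ for a typical on-scale pair $(u^*, v^*)$, and showing that the resulting values of $h$ differ by at least $c'\theta^{1/2} r^{1/3}$. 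Under the regularity of $P + \bar Q$ on $A_j$ and the uniform upper bound on $M_{u,v}$ from goodness, the supremum in $X_* = \sup_{u,v}(P(u)+M_{u,v}+Q(v))$ is attained in a constant-size neighbourhood of $(u^*, v^*)$ with positive probability over the higher-index stripes, so $h$ inherits the $\theta^{1/2} r^{1/3}$ gap between $B_1$ and $B_2$, yielding $\Var_{\omega^{(j)}}(h) \ge c'' \theta r^{2/3}$.

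The main obstacle will be the rigorous control of the argmax shift in $X_*$ under the resampling of $\omega^{(j)}$: a priori the argmax pair $(u,v)$ could migrate to distant entry/exit pairs with atypically large $M_{u,v}$, nullifying the passage-time variance. The goodness condition is precisely the tool preventing this, because the uniform upper bound $M_{u,v} \le 2|d(u)-d(v)| + C^*\theta^{1/2} r^{1/3}$ makes the maximization coercive at the $\theta^{1/2} r^{1/3}$ scale, so only $(u,v)$ pairs within the on-scale corridor can compete with $(u^*, v^*)$. Combined with the Brownian-like fluctuations of $P+\bar Q$ of order $\theta^{1/2} r^{1/3}$ across the $O(\theta r^{2/3})$ window from Theorem \ref{t:aest}, this localizes the optimizer with constant probability over the remaining randomness and completes the variance transfer.
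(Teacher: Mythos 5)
Your overall Doob-martingale framing matches the paper, but the mechanism you propose for the variance transfer is genuinely different from the paper's, and as written it has a real gap.

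The paper does not try to push a two-sided variance of the inner constrained passage time through the supremum in $X_*$. Instead it constructs a \emph{one-sided} monotone FKG coupling: let $F$ be the constant-probability event $T^{R_\theta}_{e_j,f_j}\ge 4\theta^{3/2}r + 2C^*\theta^{1/2}r^{1/3}$ (via Lemma \ref{l:uppertail}), couple the conditional law given $F$ above the unconditional law on $\omega^{(j)}$, and bound
$\E[(M_j-M_{j-1})^2\mid\cg_{j-1}]\ge \P(F)\,(\E[M_j\mid F,\cg_{j-1}]-M_{j-1})^2$
by Jensen. Because $X_*$ is monotone in the weights, the increment $X_*(\underline{\tilde\omega})-X_*(\underline\omega)$ is pointwise nonnegative, and the goodness upper bound on passage times across $R_{2\theta}^{5i_j},\dots,R_{2\theta}^{5i_j+4}$ is exactly what forces this nonnegative increment to be $\ge \frac{C^*}{10}\theta^{1/2}r^{1/3}$ on a set of constant probability. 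No anti-concentration in both directions is needed.

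The gap in your transfer step is the downward direction. You want $h$ to drop when $T^{S_*}_{u^*,v^*}$ is in its lower quartile $B_1$. But $X_*$ is a supremum over paths constrained only to $R_{2\theta}$, while $\omega^{(j)}$ lives only on the inner $\theta$-width stripe $R_\theta^{5i_j+2}$; the remaining weights in $R_{2\theta}^{5i_j+2}\setminus R_\theta^{5i_j+2}$ are already revealed in $\cg_0$. So on $B_1$ the maximizer can reroute around $S_*$ through these fixed weights, or shift to other entry/exit pairs, and the supremum saturates rather than dropping by $\theta^{1/2}r^{1/3}$. "Localizing the argmax near $(u^*,v^*)$" does not rule this out; it is precisely what the Doob increment does \emph{not} control without the monotonicity trick. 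Your goodness upper bound gives a ceiling on $M_{u,v}$, which is the right ingredient for the upward (boost) direction, but it does not give a floor preventing saturation on the downward side. A secondary concern: invoking Proposition \ref{l:variancelb} here is circular in spirit, since that proposition is in the paper derived from the very machinery Lemma \ref{l:eachgood} feeds into (you only need the $m\times m^{2/3}$ unit-aspect case, which can be made independent via Lemma \ref{l:uppertail} and Tracy--Widom, but that independent derivation should be supplied, not cited). Finally, applying Theorem \ref{t:aest} directly to the $\cg_{j-1}$-conditional prefix/suffix profiles requires a monotonicity transfer via FKG that works in only one direction; the paper's route bypasses this by relying only on box estimates (Proposition \ref{t:treebox}) that couple cleanly.
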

\begin{figure}[htbp!]
\includegraphics[width=.80\textwidth]{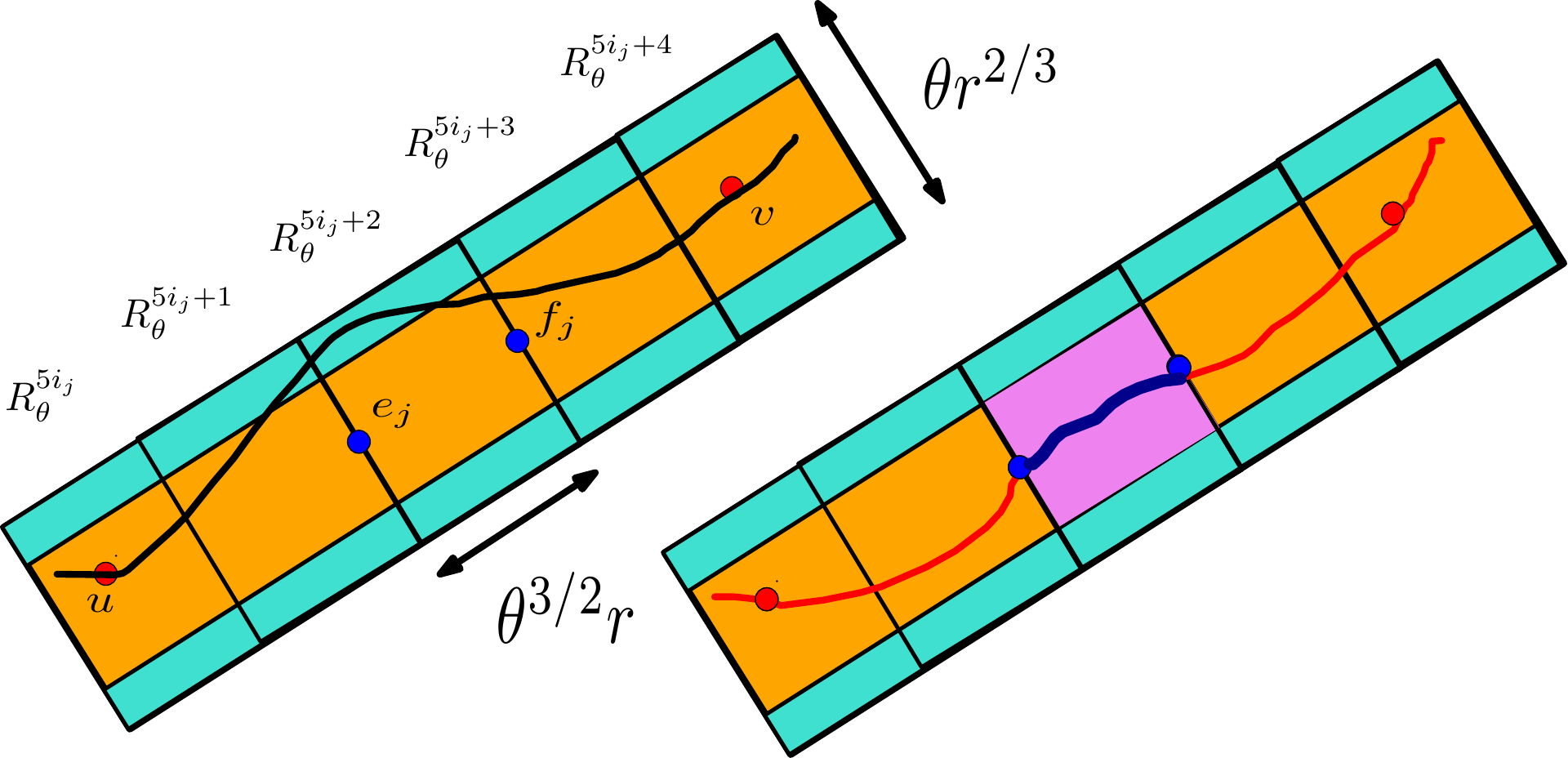} 
\caption{Figure illustrating the proof of Lemma \ref{l:eachgood}. The picture on the left depicts a geodesic locally and the picture on the right shows the same environment with the middle block $R^{i}_{\theta}$ resampled to have a high point to point polymer weight. This new fertile environment can now be used to reroute the geodesic to get an on-scale variance of $\Theta(\theta r^{2/3}).$ This argument is repeated at $O(\theta^{-3/2})$ blocks along with a Doob Martingale argument obtaining a lower bound of $\theta^{-1/2}r^{2/3}.$}
\label{fig.var1}
\end{figure}

Before starting to prove this lemma we need to introduce some more notation. Fix $i_j\in J$. Let $e_j$ and $f_j$ denote the midpoints of the left and right side of $R_{\theta}^{5i_j+2}$. Let $\omega_1$ denote a configuration on $R_{\theta}^{5i_j+2}$ drawn from the i.i.d.\ Exponential distribution. Let $\tilde{\omega}_1$ denote a a configuration on $R_{\theta}^{5i_j+2}$ drawn from the i.i.d.\ Exponential distribution conditional on $T^{R_{\theta}}_{e_j,f_j}\geq 4\theta^{3/2}r+ 2C^*\theta^{1/2}r^{1/3}$ (denote this event by $F$). Let $\mu$ denote a coupling of $(\omega_1,\tilde{\omega}_1)$ such that $\tilde{\omega}_1 \geq \omega_1$ point wise (such a coupling exists by the FKG inequality). Let $\omega_0$ denote a configuration on all the vertices that are revealed in $\cg_{j-1}$. Let $\omega_2$ denote a configuration on $\cup_{\ell >j} R_{\theta}^{5i_{\ell}+2}$. Let $\underline{\omega}= (\omega_0, \omega_1, \omega_2)$ and $\underline{\tilde{\omega}}= (\omega_0, \tilde{\omega}_1, \omega_2)$ denote the two environments and let $X^*(\underline{\omega})$ (resp.\ $X^*(\underline{\tilde{\omega}})$) denote the value of the statistic $X_*$ computed in the environment $\underline{\omega}$ (resp.\ $\underline{\tilde{\omega}}$). Observe now that for $(\omega_1, \omega_2)=\mu(\omega_1, \omega_2)$ for the coupling $\mu$ described above we have 

$$(\E[M_j\mid F, \cg_{j-1}]-M_{j-1})(\omega_0)=\int (X^*(\underline{\tilde{\omega}})-X^*(\underline{\omega}))~ d\mu~d\omega_2.$$

Observing that $X^*(\underline{\tilde{\omega}})$ is point wise larger than or equal to $X^*(\underline{\omega})$ and that $F$ is independent of $\cg_{j-1}$ and that 
$$\E[(M_j-M_{j-1})^2\mid \cg_{j-1}] \geq  \P(F)(\E[M_j\mid F, \cg_{j-1}]-M_{j-1})^{2},$$
Lemma \ref{l:eachgood} is implied by the following two lemmas.

\begin{lemma}
\label{l:fbound}
For each $C_*>0$, there exists $c>0$ such that
$$\P(T^{R_{\theta}}_{e_j,f_j}> 4\theta^{3/2}r+ 2C^* \theta^{3/2}r) \geq c.$$
\end{lemma}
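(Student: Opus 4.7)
The plan is to exploit the on-scale geometry of the setup. The endpoints $e_j$ and $f_j$ are separated by a diagonal displacement $d(f_j)-d(e_j) = 2\theta^{3/2}r$, so the unrestricted LPP $T_{e_j,f_j}$ has mean $\approx 4\theta^{3/2}r$ with KPZ fluctuation scale $(\theta^{3/2}r)^{1/3} = \theta^{1/2}r^{1/3}$; moreover, the containing rectangle $R_{\theta}$ provides transverse half-width $\theta r^{2/3}$, which matches the KPZ transverse scale $(\theta^{3/2}r)^{2/3}$ for paths of that length exactly. Thus the restricted passage time $T^{R_{\theta}}_{e_j,f_j}$ is on-scale for the pair $(e_j,f_j)$, and the task reduces to lower-bounding a right-tail event of $T^{R_{\theta}}_{e_j,f_j}$ at the scale $\theta^{1/2}r^{1/3}$ by a positive constant (consistent with how the event $F$ is defined just above the lemma).

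I would proceed in three steps. First, by Theorem \ref{t:moddev} one has $\E T_{e_j,f_j} \geq 4\theta^{3/2}r - C_0 \theta^{1/2}r^{1/3}$ for an absolute constant $C_0$, and the corresponding bound $\E T^{R_{\theta}}_{e_j,f_j} \geq 4\theta^{3/2}r - C_1 \theta^{1/2}r^{1/3}$ for a possibly larger absolute $C_1$ follows from the on-scale constrained LPP estimates in the appendix (Proposition \ref{t:treebox}). Second, weak convergence of $2^{-4/3}(\theta^{3/2}r)^{-1/3}(T_{e_j,f_j} - 4\theta^{3/2}r)$ to the GUE Tracy--Widom law \cite{Jo99} gives, for every fixed $K > 0$, the right-tail lower bound $\P(T_{e_j,f_j} > 4\theta^{3/2}r + K\theta^{1/2}r^{1/3}) \geq c_1(K) > 0$ uniformly for all $r$ large. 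Third, pass to the constrained version using that $T^{R_{\theta}}_{e_j,f_j} = T_{e_j,f_j}$ on the event $\{\Gamma_{e_j,f_j} \subset R_{\theta}\}$, and use the transverse fluctuation estimates (Proposition \ref{p:tfnew} and the appendix bounds) to show $\P(\Gamma_{e_j,f_j} \subset R_{\theta}) \geq p_0 > 0$ for some $p_0 = p_0(\theta)$, since $R_{\theta}$ is on-scale in width for the given displacement.

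The main technical obstacle is coordinating the right-tail deviation of $T_{e_j,f_j}$ with the geodesic containment $\Gamma_{e_j,f_j} \subset R_{\theta}$, since neither event is monotone in the environment and a direct FKG argument is not available. The cleanest workaround is to replace $R_{\theta}$ by a strict subrectangle $R' \subset R_{\theta}$ of transverse half-width $\tfrac{1}{2}\theta r^{2/3}$ centered on the segment from $e_j$ to $f_j$: the constrained passage time $T^{R'}_{e_j,f_j}$ is intrinsically defined without any geodesic-containment issue, it has its own Tracy--Widom-like right-tail lower bound at the scale $\theta^{1/2}r^{1/3}$ by the on-scale analogue of Step~2 together with the mean estimate of Step~1, and the monotonicity $T^{R_{\theta}}_{e_j,f_j} \geq T^{R'}_{e_j,f_j}$ transfers the bound. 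Choosing $K > 2C^* + C_1$ in Step~2 then delivers the required constant $c = c(C^*) > 0$.
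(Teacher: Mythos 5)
Your first two steps are sound, and you correctly identify the central difficulty: the right-tail deviation and the geodesic-containment event are not jointly monotone, so you cannot naively multiply probabilities or invoke FKG. However, your proposed workaround is circular. Passing to the subrectangle $R'\subset R_\theta$ and claiming that $T^{R'}_{e_j,f_j}$ has ``its own Tracy--Widom-like right-tail lower bound at the scale $\theta^{1/2}r^{1/3}$ by the on-scale analogue of Step~2'' presupposes exactly the statement being proved: there is no weak-convergence result (in the paper or that you can cite) for passage times constrained to an on-scale (or narrower) cylinder, and in fact the scaling limit of $T^{R'}_{e_j,f_j}$ is \emph{not} Tracy--Widom. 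A right-tail lower bound at scale $m^{1/3}$ for a constrained passage time is precisely the nontrivial content of Lemma~\ref{l:uppertail}. Shrinking the rectangle also works against you geometrically: for confinement to be automatic you would want a rectangle \emph{wider} than on-scale, but for the monotonicity $T^{R_\theta}\geq T^{R'}$ you need $R'$ narrower. These two requirements cannot both hold at the full length $\theta^{3/2}r$.

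The paper resolves this tension by a tensorization/superadditivity argument (Lemma~\ref{l:uppertail}, applied with $m=\theta^{3/2}r$ and $x=2C^*$, noting the appendix remark that the lemma holds for $m\times qm^{2/3}$ rectangles with $q$ of order one). The idea is to cut the rectangle into $1/\rho$ many short segments of diagonal length $\rho m$; for each short segment, the containing rectangle has half-width $\rho^{-2/3}(\rho m)^{2/3} = m^{2/3}$, which is $\rho^{-2/3}\gg 1$ times the \emph{natural} transverse scale for that segment, so with probability tending to $1$ as $\rho\to 0$ the unconstrained geodesic is already confined. Tracy--Widom weak convergence then gives a uniform constant lower bound $\beta>0$ on each short segment's constrained right-tail event, and since the constrained passage time across the full rectangle stochastically dominates the sum of the independent segment contributions, one gets $\P\geq \beta^{1/\rho}$ at deviation level $\rho^{-2/3}m^{1/3}$; choosing $\rho=x^{-3/2}$ gives the $Ce^{-cx^{3/2}}$ bound. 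This tensorization is the missing ingredient; without it your Step~3 does not close.
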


\begin{proof}
Follows from Lemma \ref{l:uppertail}.
\end{proof}

\begin{lemma}
For $C_*$ sufficiently large, there is a set $A_j$ of configurations $\omega_0$ with probability at least $0.9$ and a set $B_j$ of configurations $\omega_2$ with probability at least $0.9$ such that for each $\omega_0\in A_j$ and $\omega_2\in B_j$ there exists a set $C_j$ of configurations $\omega_1$ with $\mu$-probability at least $0.9$ such that for $\underline{\omega}$ coming from these sets (and the coupled $\underline{\tilde{\omega}}$) we have 
$X^*(\underline{\tilde{\omega}})-X^*(\underline{\omega})\geq \frac{C^*}{10}\theta^{1/2}r^{1/3}$.
\end{lemma}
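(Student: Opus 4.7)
The plan is to construct, in the boosted environment $\underline{\tilde\omega}$, a competitor directed path $\pi$ from $\mathbf{0}$ to $\mathbf{r}$ staying inside $R_{2\theta}$ whose weight exceeds $X_*(\underline\omega)$ by at least $\frac{C^*}{10}\theta^{1/2}r^{1/3}$. The path $\pi$ will be obtained by splicing $\Gamma^*(\underline\omega)$, the polymer achieving $X_*(\underline\omega)$, with a detour through $e_j$ and $f_j$ that harvests the conditioned boost $T^{R_\theta}_{e_j, f_j}(\underline{\tilde\omega}) \ge 4\theta^{3/2}r + 2C^*\theta^{1/2}r^{1/3}$ guaranteed by $\tilde\omega_1 \in F$. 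Here I interpret $e_j,f_j$ as the midpoints of the two sides of $R_\theta^{5i_j+2}$ perpendicular to the diagonal direction, so that $d(f_j)-d(e_j)=2\theta^{3/2}r$ and any directed path from $e_j$ to $f_j$ stays inside $R_\theta^{5i_j+2}$.

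First I identify the favourable events in $\underline\omega$. Let $u^*$ and $v^*$ denote the points at which $\Gamma^*(\underline\omega)$ crosses the antidiagonals $\mathcal{L}_{5i_j}$ and $\mathcal{L}_{5i_j+5}$; since $\Gamma^*(\underline\omega)\subset R_{2\theta}$, we have $u^* \in R_{2\theta}^{5i_j}$ and $v^* \in R_{2\theta}^{5i_j+4}$. The first favourable event is the good-index bound $\sup_{u\in R_{2\theta}^{5i_j},\,v\in R_{2\theta}^{5i_j+4}}\bigl(T_{u,v}(\underline\omega)-2|d(u)-d(v)|\bigr)\le C^*\theta^{1/2}r^{1/3}$, which holds with conditional probability at least $0.99$ given $\omega_0$ (by the hypothesis $\omega_0\in \ce_8$). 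The second favourable event is a uniform lower bound on constrained passage times, $\inf_{u\in R_{2\theta}^{5i_j}} T^{R_{2\theta}}_{u,e_j}(\underline\omega)\ge 8\theta^{3/2}r-C_1\theta^{1/2}r^{1/3}$ and its analogue for $T^{R_{2\theta}}_{f_j,v}$ with $v\in R_{2\theta}^{5i_j+4}$. The typical expected value $(\sqrt{m_1}+\sqrt{m_2})^2$ with $m_1+m_2=4\theta^{3/2}r$ and $|m_1-m_2|\le 2\theta r^{2/3}$ is at least $8\theta^{3/2}r-O(\theta^{1/2}r^{1/3})$ (the $(m_1-m_2)^2/(m_1+m_2)$ correction being absorbed), and a union bound over endpoints combined with Proposition \ref{t:treeinf} delivers the uniform bound with overwhelming probability. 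On these two events define $\pi$ by concatenating the $\Gamma^*(\underline\omega)$-segment from $\mathbf{0}$ to $u^*$, the $R_{2\theta}$-constrained geodesic from $u^*$ to $e_j$ in $\underline{\tilde\omega}$, the $R_\theta$-constrained geodesic from $e_j$ to $f_j$ achieving the $F$-bound, the $R_{2\theta}$-constrained geodesic from $f_j$ to $v^*$ in $\underline{\tilde\omega}$, and the $\Gamma^*(\underline\omega)$-segment from $v^*$ to $\mathbf{r}$. The outer segments have identical weights in $\underline\omega$ and $\underline{\tilde\omega}$ since these environments agree off $R_\theta^{5i_j+2}$, and the pointwise domination $\underline{\tilde\omega}\ge\underline\omega$ from $\mu$ only helps the middle detour. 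Since $\pi\subset R_{2\theta}$, we obtain $X_*(\underline{\tilde\omega})\ge \ell(\pi)$.

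Subtracting $X_*(\underline\omega)=\ell_{\mathbf{0}\to u^*}(\Gamma^*,\underline\omega)+\ell_{u^*\to v^*}(\Gamma^*,\underline\omega)+\ell_{v^*\to\mathbf{r}}(\Gamma^*,\underline\omega)$ and assembling the estimates,
\[
X_*(\underline{\tilde\omega})-X_*(\underline\omega)\ge \bigl(8\theta^{3/2}r-C_1\theta^{1/2}r^{1/3}\bigr)+\bigl(4\theta^{3/2}r+2C^*\theta^{1/2}r^{1/3}\bigr)+\bigl(8\theta^{3/2}r-C_1\theta^{1/2}r^{1/3}\bigr)-\bigl(20\theta^{3/2}r+C^*\theta^{1/2}r^{1/3}\bigr),
\]
where $\ell_{u^*\to v^*}(\Gamma^*,\underline\omega)\le T_{u^*,v^*}(\underline\omega)\le 20\theta^{3/2}r+C^*\theta^{1/2}r^{1/3}$ uses $d(v^*)-d(u^*)=10\theta^{3/2}r$ and the good-index bound. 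The $\theta^{3/2}r$ terms cancel exactly ($8+4+8=20$), leaving $(C^*-2C_1)\theta^{1/2}r^{1/3}$, which is $\ge \frac{C^*}{10}\theta^{1/2}r^{1/3}$ for $C^*$ chosen large relative to $C_1$. The sets $A_j, B_j, C_j$ are built by peeling the marginals in order via repeated Markov's inequality applied to the conditional probabilities of the two favourable events: $A_j$ is the set of $\omega_0$ for which the $(\omega_1,\omega_2)$-conditional probability of these events is at least $1-\varepsilon$; $B_j$ (given $\omega_0\in A_j$) is the corresponding set of $\omega_2$; and $C_j$ is the resulting set of $\omega_1$. Each threshold is tuned so that the enclosing probability is at least $0.9$. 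The main technical obstacle is the uniform-in-endpoint lower bound on the constrained passage times $T^{R_{2\theta}}_{u,e_j}$ and $T^{R_{2\theta}}_{f_j,v}$; this requires combining moderate-deviation inputs from Appendix \ref{s:appa} with a union bound carefully tuned to the on-scale geometry, which is routine given the KPZ-scale estimates already developed.
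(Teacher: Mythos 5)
Your overall plan — splice $\Gamma^*(\underline\omega)$ with a detour through $e_j,f_j$ to harvest the $F$-boost, compare weights, and tune constants so the $\theta^{3/2}r$ terms cancel — is the same strategy the paper uses, and your arithmetic, your interpretation of $e_j,f_j$ as on the short sides (so that $d(f_j)-d(e_j)=2\theta^{3/2}r$), and your use of the good-index bound to control the replaced $u^*\to v^*$ segment are all sound. However, you have taken $u^*,v^*$ to be the crossings of $\Gamma^*(\underline\omega)$ with $\mathcal{L}_{5i_j}$ and $\mathcal{L}_{5i_j+5}$, which are guaranteed only to lie in $R_{2\theta}$, and this creates a gap that the paper is specifically structured to avoid.

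The gap is in your ``second favourable event,'' the uniform lower bound $\inf_u T^{R_{2\theta}}_{u,e_j}(\underline\omega)\ge 8\theta^{3/2}r-C_1\theta^{1/2}r^{1/3}$. This constrained passage time depends on the configuration in $R_{2\theta}\setminus R_\theta$, which is part of the \emph{fixed} $\omega\in\ce_8$, not part of the randomness over which $A_j,B_j,C_j$ are being cut out. Proposition \ref{t:treeinf}/\ref{t:treebox} give the bound under the \emph{product} measure, but $\ce_8\subset\ce_4$ is a decreasing (barrier) event while your event is increasing, so the FKG inequality runs in the wrong direction and the unconditional estimate does not transfer to the conditioned environment. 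Worse, even the deterministic fallback fails: if $u^*$ lies in $R_{2\theta}\setminus R_\theta$ at transversal distance $\delta\le \theta r^{2/3}$ from $R_\theta$, forcing the path to jog diagonally into $R_\theta$ first and then applying an $R_\theta$-internal bound gives at best $8\theta^{3/2}r - 2\delta + O(\delta) - O(\theta^{1/2}r^{1/3})$, which falls short by a term of order $\theta r^{2/3}\gg \theta^{1/2}r^{1/3}$. So the bound you need is not a ``routine union bound''; without control on where $u^*,v^*$ land transversally, it simply need not hold on $\ce_8$.

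The paper sidesteps exactly this. It first proves (using the barrier event $\ce_4$ together with the lower bounds $\tilde A_j,B_j,\tilde C_j$ on block-constrained times $T^{R_\theta^k}$, ``as in Lemma \ref{l:123}'') that the path $\gamma_1$ achieving $X_*$ must actually \emph{intersect} $R_\theta^{5i_j}$ and $R_\theta^{5i_j+4}$, and takes $u,v$ to be those intersection points, which lie in $R_\theta$. The competitor path $u\to e_j\to f_j\to v$ can then be built entirely inside $R_\theta$, and its weight is controlled by the block-internal quantities $T^{R_\theta^k}_{\cdot,\cdot}$, which are measurable with respect to the $R_\theta$ configuration alone and hence independent of $\omega$ and of the conditioning on $\ce_8$. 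This is the step your proof is missing: you either need to establish that $u^*,v^*$ (or suitable replacements) lie in $R_\theta$, or you need a lower bound on $T^{R_{2\theta}}_{u^*,e_j}$ that uses only $R_\theta$-measurable inputs, neither of which your argument supplies. The rest of your construction (the Fubini/Markov peeling of $A_j,B_j,C_j$, the coupling $\mu$, the weight bookkeeping) is correct, so the fix is local, but it is a real missing step, not a routine one.
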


\begin{proof}
Let us first define $A_j$, $B_j$ and $C_j$ and we shall afterwards verify that they satisfy the conclusions of the lemma. Notice that $\omega_0$ and $\omega_2$ both refer to certain different  rectangles $R_{\theta}^{j}$. Let us call these disjoint sets of indices $J_1$ and $J_2$. Let  $\tilde{A}_j$ denote the event that for each $k\in J_1$ we have 
$$ T_{u,v}^{R^k_{\theta}} \geq \E T_{u,v}- \log ^{10}(\theta^{-1}) \theta^{1/2} r^{1/3}$$
for all $(u,v)\in \sS(R_{\theta}^{k})$. Further, we also ask for $k=5i_j, 5i_j+1, 5i_j+3, 5i_j+4$ a stronger lower bound $\E T_{u,v}-\frac{C^*}{4}\theta^{1/2} r^{1/3}$ holds. Clearly for $C^*$ sufficiently large we have $\P(\tilde{A}_j)\geq 0.999$ using Proposition \ref{t:treebox}. Let $B_i$ denote the event that for each $k\in J_2$ we have 
$$ T_{u,v}^{R^k_{\theta}} \geq \E T_{u,v}- \log ^{10}(\theta^{-1}) \theta^{1/2} r^{1/3}$$
for all $(u,v)\in \sS(R_{\theta}^{k})$. Again, by Proposition \ref{t:treebox} we have $\P(B_j)\geq 0.9$. Finally let $\tilde{C}_j$ denote the set of all configurations $\omega_1$ such that $$ T_{u,v}^{R^{5i_j+2}_{\theta}} \geq \E T_{u,v}- \log ^{10}(\theta^{-1}) \theta^{1/2} r^{1/3}$$
$(u,v)\in \sS(R_{\theta}^{5i_j+2})$. Again we have $\P(\tilde{C}_j)\geq 0.999$. {It follows by Fubini's theorem and the fact that $j$ is a good index that there is a subset $A_j$ of $\tilde{A}_j$ with probability at least $0.9$ such that for each $\omega_0\in A_j$, there exists a subset $C_j$ of $\tilde{C}_j$ such that for all $\omega_1\in C_j$ we have $T_{u,v}(\underline{\omega})\leq 2|d(u)-d(v)|+C^*\theta^{1/2} r^{1/3}$ for each $u\in R_{2\theta}^{5i_j}$ and $v\in R_{2\theta}^{5i_j+4}$}.
It remains to prove that for $\underline{\omega}$ constructed from $A_j,B_j,C_j$ as described in the statement of the proposition and the coupled $\underline{\tilde{\omega}}$) we have 
$X^*(\underline{\tilde{\omega}})-X^*(\underline{\omega})\geq \frac{C^*}{2}\theta^{1/2}r^{1/3}$. Let $\gamma_1$ denote the path attaining $X_*$ in the environment $\underline{\omega}$. Observe that, by definition of $A_j,B_j$ and $C_j$ we conclude as in the proof of Lemma \ref{l:123} that $\gamma_1$ must intersect $R_{\theta}^{5i_j}$ and $R_{\theta}^{5i_j+4}$. say at points $u$ and $v$. Observe that by definition 
$T_{u,v}^{R_{2\theta}}(\underline{\omega})\leq 2|d(u)-d(v)|+C^*\theta^{1/2} r^{1/3}$. On the other hand 
$$T_{u,e_j}(\underline{\tilde{\omega}})+T_{e_j,f_j}(\underline{\tilde{\omega}})+T_{f_j,v}(\underline{\tilde{\omega}}) \geq 2|d(u)-d(v)|+\frac{11C^*}{10}\theta^{1/2} r^{1/3}$$
concluding the proof. 
\end{proof}
 
Before concluding we note that our proof of Proposition \ref{p:variance} can be used to obtain the order of variance of the length of the best path constrained to stay within a thin cylinder, answering a question raised in \cite{DPM17}. 

\begin{proposition}
\label{l:variancelb}
There exists absolute positive constants $C_1$ and $C_2$ and $r_0>0$ such that for all $r>r_0$ and $\theta \leq 1$ we have
$$C_1\theta^{-1/2} r^{2/3} \leq {\rm {Var}}~X_{\theta} \leq C_2 \theta^{-1/2} r^{2/3}.$$
\end{proposition}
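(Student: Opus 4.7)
The proof splits into matching lower and upper bounds, both exploiting a partition of $R_{\theta}$ into $K \asymp \theta^{-3/2}$ consecutive diagonal slabs $B_1,\ldots,B_K$ of shape approximately $\theta^{3/2}r \times \theta r^{2/3}$. Each slab is on-scale in the KPZ sense, so by Theorem \ref{t:moddev} the in-slab passage time between natural boundary endpoints has variance of order $\theta r^{2/3}$. The guiding heuristic is that $X_{\theta}$ behaves like a sum of $K$ essentially independent on-scale slab passage times, producing $\Var(X_{\theta}) \asymp K \cdot \theta r^{2/3} = \theta^{-1/2} r^{2/3}$.

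The lower bound is proved by running the argument of Proposition \ref{p:variance} \emph{unconditionally}, i.e.\ without conditioning on $\ce_4$. Declare an index $j$ good if, conditionally on the weights outside $B_j$, the in-slab supremum bound $\sup_{u,v} T^{B_j}_{u,v} - 2|d(u)-d(v)| \leq C^{\ast} \theta^{1/2} r^{1/3}$ holds with probability at least $0.99$. By applying Proposition \ref{t:treesup} directly (no conditioning needed), the unconditional expected fraction of bad indices is arbitrarily small for $C^{\ast}$ large, so Markov's inequality shows that at least half of the indices are good on an event of probability bounded below by an absolute constant. For the Doob martingale that reveals the good slabs one at a time, the resampling/coupling input of Lemma \ref{l:eachgood} applies verbatim: at each good slab, on a positive-measure set of outside configurations, forcing an atypically large in-slab passage time reroutes the constrained polymer and pushes $X_{\theta}$ up by at least $c\theta^{1/2}r^{1/3}$. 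Summing the martingale differences over the $\Omega(K)$ good slabs then yields $\Var(X_{\theta}) \geq C_1 \theta^{-1/2} r^{2/3}$.

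For the upper bound, I would apply the block-level Efron--Stein inequality, using the independence of weights across slabs. Letting $X_{\theta}^{(j)}$ denote $X_{\theta}$ with the weights in $B_j$ replaced by an independent copy, this gives $\Var(X_{\theta}) \leq \tfrac{1}{2} \sum_{j=1}^{K} \E[(X_{\theta} - X_{\theta}^{(j)})^2]$. The crucial geometric bound is $|X_{\theta} - X_{\theta}^{(j)}| \leq \sup_{u,v \in \partial B_j} |T_{u,v}^{B_j} - \tilde T_{u,v}^{B_j}|$, obtained by writing $X_{\theta} = \max_{u,v} [A(u) + T_{u,v}^{B_j} + B(v)]$ where $A,B$ are the outside-slab contributions and noting that only the middle term changes under the resampling. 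Combining the moderate deviation estimates of Theorem \ref{t:moddev} with the rectangular concentration in Proposition \ref{t:treebox}, each slab contributes $\E[(X_{\theta} - X_{\theta}^{(j)})^2] \leq C \theta r^{2/3}$, and summing gives $\Var(X_{\theta}) \leq CK \cdot \theta r^{2/3} = O(\theta^{-1/2} r^{2/3})$.

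The main technical obstacle is the sharp $L^2$-control of the supremum $\sup_{u,v \in \partial B_j} |T_{u,v}^{B_j} - \tilde T_{u,v}^{B_j}|$ \emph{without} logarithmic losses that a naive union bound over endpoint pairs would introduce. The remedy I have in mind is to exploit that the maximizing pair essentially coincides with the entry/exit of the original constrained polymer in $B_j$, which is localized by the transversal fluctuation estimates underlying Proposition \ref{t:treebox}; restricting the supremum to this localized set of pairs and then applying the point-to-point moderate deviation bound at those pairs gives the sought $O(\theta r^{2/3})$ control per slab. The lower bound is by contrast quite robust, requiring only a positive-measure resampling event per good slab, so the bulk of the technical work lies in the upper bound.
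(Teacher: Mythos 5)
Your approach matches the paper's (very terse) indicated proof: for the lower bound, an unconditional version of the Doob-martingale resampling argument from Proposition~\ref{p:variance}, and for the upper bound, a one-box-at-a-time Poincar\'e/Efron--Stein bound. One clarification worth making: the ``main technical obstacle'' you identify---the risk of a logarithmic loss from a union bound over endpoint pairs on $\partial B_j$---is not actually present, and the remedy you sketch is unnecessary. Propositions~\ref{t:treesup} and~\ref{t:treebox} are already formulated as suprema/infima over \emph{all} pairs $(u,v)\in\mathcal{S}(B_j)$ (and every bottom-to-top boundary pair of the on-scale slab $B_j$ lies in $\mathcal{S}(B_j)$ once $r$ is large, since the slopes are close to $1$), with exponential (resp.\ stretched-exponential) tails at scale $(\theta^{3/2}r)^{1/3}=\theta^{1/2}r^{1/3}$. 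Combining these with $\E T_{u,v}=2|d(u)-d(v)|-O(\theta^{1/2}r^{1/3})$ on $\mathcal{S}(B_j)$ directly yields $\E\bigl[\sup_{u,v}|T^{B_j}_{u,v}-\E T_{u,v}|^2\bigr]=O(\theta r^{2/3})$, and since $\sup|T-\tilde T|\le \sup|T-\E T|+\sup|\tilde T-\E T|$, the per-slab Efron--Stein contribution follows without any localization argument. On the lower bound side, ``applies verbatim'' is a slight overstatement: Lemma~\ref{l:eachgood} is stated for $X_*$ (the $R_{2\theta}$-constrained weight) conditionally on $\ce_4$, so for $X_\theta$ one needs the mild modification of replacing the conditional notion of a good index by its unconditional analogue (every index is good with high probability by Proposition~\ref{t:treesup}) and running the coupling for the $R_\theta$-constrained polymer; this is precisely the ``simpler version'' the paper alludes to and poses no difficulty.
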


The lower bound in the above proposition can be proved using a simpler version of the argument used in the proof of Proposition \ref{p:variance}. Upper bound follows from a Poincar\'e inequality argument after revealing one box at a time as before.

\bibliography{slowbond}
\bibliographystyle{plain}

\appendix
\section{}
\label{s:appa}
For easy reference purpose we record in this section the results from \cite{BSS14} that we have used throughout. All these results use the following basic ingredient which is implicit in \cite{BFP12}, as explained in \cite{BSS14}.

\begin{theorem}
\label{t:moddev}
Then  there exist constants $N_0,t_0,c>0$ such that we have for all $n>N_0, t>t_0$ and all $h \in (\frac{1}{2},2)$
$$\P[|T_{\mathbf{0}, (n,hn)}- n(1+\sqrt{h})^{2}|\geq tn^{1/3}]\leq e^{-ct}.$$
\end{theorem}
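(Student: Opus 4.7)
The plan is to reduce the statement to known exact-formula asymptotics for the Laguerre Unitary Ensemble (LUE). By the Robinson--Schensted--Knuth correspondence (Johansson's identification for exponential LPP), for integer $hn$ the passage time $T_{\mathbf{0},(n,hn)}$ has the same distribution as the largest eigenvalue of a suitably parameterized LUE matrix. This yields an explicit Fredholm determinant representation
\[
\P[T_{\mathbf{0},(n,hn)} \leq s] = \det(I - K^L_{n,h})_{L^2(s,\infty)},
\]
where $K^L_{n,h}$ is the Christoffel--Darboux kernel built from Laguerre polynomials with parameters depending on $n$ and $hn$. The spectral edge of this ensemble sits exactly at $n(1+\sqrt{h})^2$ with fluctuations on scale $n^{1/3}$, which is where the $4n$-centering and $n^{1/3}$-scaling come from.

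For the upper tail $\P[T_{\mathbf{0},(n,hn)} - n(1+\sqrt{h})^2 \geq tn^{1/3}]$, I would use the elementary determinantal bound
\[
\P[T_{\mathbf{0},(n,hn)} > s] \leq \int_s^\infty K^L_{n,h}(x,x)\, dx,
\]
and then invoke a uniform saddle-point analysis of $K^L_{n,h}$ in the edge-scaling regime to show $K^L_{n,h}(x,x) \leq C n^{-1/3} e^{-c\, u}$ when $x = n(1+\sqrt{h})^2 + un^{1/3}$ with $u \geq t_0$. Integration over $u \geq t$ produces the bound $e^{-ct}$. The $e^{-ct}$ rate (as opposed to the Gaussian-like $e^{-ct^{3/2}}$) is the correct moderate-deviation analogue of the Tracy--Widom upper tail.

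For the lower tail, I would bound the Fredholm determinant directly: write $\det(I-K^L_{n,h})_{L^2(s,\infty)}$ as $\exp(-\mathrm{tr}\, K^L_{n,h}|_{(s,\infty)} + \text{corrections})$ and again use the edge-scaling asymptotics of $K^L_{n,h}$. Here the optimal decay is $e^{-ct^3}$ (the Tracy--Widom lower tail), so the claimed bound $e^{-ct}$ is not tight but sufficient for our purposes.

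The main obstacle, which is precisely the content of the uniform moderate deviation estimates in \cite{BFP12}, is controlling the kernel $K^L_{n,h}$ uniformly over $h \in (1/2,2)$ and over the full moderate-deviation window $t_0 \leq t \ll n^{2/3}$. Both the spectral edge and the saddle point defining the kernel's large-$n$ behavior depend on $h$, and one needs the constants $c, N_0, t_0$ to be independent of $h$ in this compact range. This is standard once one has uniform control on the underlying contour integrals (i.e., the saddle is non-degenerate uniformly in $h\in (1/2,2)$), but carrying it out carefully is the substantive part of the argument and is the reason we cite \cite{BFP12, BSS14} rather than reproving it here. Note also that the restriction $h\in(1/2,2)$ can be removed by a transposition/scaling symmetry together with direct treatment of boundary cases, but the interior range suffices for all applications in the paper.
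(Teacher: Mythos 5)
The paper does not actually prove Theorem~\ref{t:moddev}; it states that this estimate ``is implicit in \cite{BFP12}, as explained in \cite{BSS14}'' and uses it as a black-box input. Your sketch reconstructs the standard route to such a bound for exponential LPP and is consistent with what those references do: Johansson's LUE identification, the Christoffel--Darboux/Laguerre kernel, the union bound $\P[\lambda_{\max}>s]\le\int_s^\infty K(x,x)\,dx$ for the right tail, the Hadamard-type bound $\det(I-K)\le e^{-\operatorname{tr}K}$ for the left tail, and a uniform-in-$h$ steepest descent analysis of the kernel at the spectral edge. So the proposal is a reasonable high-level account of where the theorem comes from, and correctly flags the substantive technical point --- uniformity over $h$ in a compact interval away from $0$ and $\infty$.

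Two small corrections of emphasis. First, $e^{-ct}$ is \emph{not} ``the correct moderate-deviation analogue'' of the upper tail: the sharp upper-tail rate in the moderate deviation window is $e^{-ct^{3/2}}$ and the lower-tail rate is $e^{-ct^{3}}$; the theorem deliberately states only the weaker, symmetric bound $e^{-ct}$, which is all the paper ever uses. Second, for the lower tail you do not need an approximate expansion with ``corrections'': since the restricted kernel $K^L_{n,h}|_{(s,\infty)}$ is a positive trace-class contraction, the inequality $\det(I-K)\le e^{-\operatorname{tr}K}$ is exact, and $\operatorname{tr}K|_{(s,\infty)}$ is just the expected number of eigenvalues above $s$, which grows polynomially in $t$ as $s$ moves into the bulk by $tn^{1/3}$. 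With those caveats your outline matches the intended source of Theorem~\ref{t:moddev}.
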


The following propositions are quoted from \cite{BSS14}, where the same were proved for Poissonian LPP.  As already explained in \cite{BSS14} the same proof goes through verbatim if we replace the moderate deviation estimates for Poissonian LPP, used there, by Theorem \ref{t:moddev}. We first need some notations. Consider the rectangle $U$ given by $0\leq x+y \leq 2r$, $|x-y|\leq r^{2/3}$. The next three propositions show that with large probability paths between all pairs of points in $U$ behave typically.

\begin{proposition}
\label{t:treeinf}
There exist absolute constants $c_1>0$, $r_0>0$ and $t_0>0$ such that we have for all $r>r_0$ and $t>t_0$
\begin{equation}
\label{e:treeinfgeneral}
\P\left(\inf_{(u,v)\in \mathcal{S}(U)} T_{u,v}-2|d(u)-d(v)| \leq -t r^{1/3}\right)\leq e^{-c_1t}.
\end{equation}
\end{proposition}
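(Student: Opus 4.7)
The plan is to combine the pointwise moderate deviation of Theorem~\ref{t:moddev} with a discretization-plus-monotonicity argument that transfers it uniformly over $\mathcal{S}(U)$. First, I would introduce a grid $G \subset U$ with anti-diagonal and transversal spacing $\Delta := r^{2/3}/K$ for a large absolute constant $K$. The set of grid pairs $(\hat u, \hat v) \in G^2 \cap \mathcal{S}(U)$ has cardinality polynomial in $r$, say $O(r^\alpha)$. For each such pair with coordinate displacement $(a,b)$, Theorem~\ref{t:moddev} applied to the translated pair gives
$$ \mathbb{P}\bigl[T_{\hat u, \hat v} \leq (\sqrt a + \sqrt b)^2 - s (a+b)^{1/3}\bigr] \leq e^{-cs}, \qquad s > t_0. $$
A short computation using the transversal bound $|a-b| \leq 2 r^{2/3}$ together with the slope condition confirms that the expectation correction $(\sqrt a + \sqrt b)^2 - 2(a+b) = -(\sqrt a - \sqrt b)^2$ remains within the $O((a+b)^{1/3})$ budget on each of the relevant sub-scales, so the pointwise estimate rephrases cleanly in terms of $T_{\hat u, \hat v} - 2|d(\hat u) - d(\hat v)|$.

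Next, I would extend the bound from grid pairs to an arbitrary $(u,v) \in \mathcal{S}(U)$ by planar monotonicity. For each such pair, I select a nested grid pair $(\hat u, \hat v)$ with $u \preceq \hat u \preceq \hat v \preceq v$ and $d(\hat u) - d(u),\, d(v) - d(\hat v) \leq \Delta$. Concatenating geodesic segments yields
$$ T_{u, v} \;\geq\; T_{u, \hat u} + T_{\hat u, \hat v} + T_{\hat v, v} - O(1), $$
so the grid-pair bound controls the middle summand. The short-distance terms $T_{u,\hat u}$ and $T_{\hat v, v}$ must be bounded below uniformly in the free endpoint; this is achieved by a second (finer) layer of the same discretization argument, applied at scale $\Delta$. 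The natural fluctuation on that scale is $\Delta^{1/3} = K^{-1/3} r^{2/9} \ll r^{1/3}$, so these corrections fit comfortably inside the $tr^{1/3}$ budget provided $K$ is a fixed sufficiently large constant.

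The main obstacle is reconciling the polynomial-in-$r$ union-bound factor against the $r$-independent target $e^{-c_1 t}$, where both $c_1$ and $t_0$ must be absolute. I would split on the size of $t$: for $t \gtrsim \log r$, one absorbs the polynomial factor by taking $s = t + (\alpha/c)\log r$ in the moderate deviation; for smaller $t$, the target $e^{-c_1 t}$ is itself only polynomially small, so by choosing $c_1$ small enough relative to the moderate deviation exponent $c$ the raw union bound already suffices. The recursive two-scale structure in the extension step is the most delicate bookkeeping, and the whole argument rests on $K$ being a fixed large constant that separates the main scale from the short-distance corrections without blowing up the union-bound factor.
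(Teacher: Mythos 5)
The paper does not actually prove this proposition; it imports Proposition~10.1 of \cite{BSS14} verbatim, noting that the argument there (written for Poissonian LPP) carries over once the moderate-deviation input is replaced by Theorem~\ref{t:moddev}. Your reconstruction has two gaps, and the second is fatal for any single-scale strategy. First, the short-distance pieces are not controlled by the fluctuation budget $\Delta^{1/3}$: you only impose $u \preceq \hat u$ and $d(\hat u)-d(u) \leq \Delta$, so with $\hat u - u = (a,b)$ the deterministic centering error $(\sqrt{a}-\sqrt{b})^2$ can be as large as $\Theta(\Delta) = \Theta(r^{2/3}/K)$ when, for instance, $a \approx \Delta$ and $b\approx 0$. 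That is a loss polynomially larger than $r^{1/3}$ and entirely independent of the stochastic fluctuation you estimate. Forcing the centering error to match $\Delta^{1/3}$ would require choosing $\hat u$ with transversal offset $|a-b| = O(\sqrt{\Delta})$, i.e.\ a transversal grid much denser than $\Delta$, which only aggravates the second problem.

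Second, your treatment of the union bound for bounded $t$ does not work. The grid has $N(r)=r^{\Theta(1)}$ pairs, so the union bound gives $N(r)\,e^{-ct}$. For any fixed $t=t_0$ and $r\to\infty$ this diverges, while the target $e^{-c_1 t_0}$ is a fixed constant below $1$; no choice of $c_1$ repairs this, and $t_0$ cannot be made $r$-independent from a single-scale discretization no matter how $K$ and $\Delta$ are tuned. What actually closes the argument (and is the content of the dyadic ``tree'' constructions in \cite{BSS14}) is a genuinely multi-scale scheme: at scale $j$ one works with $2^{O(j)}$ sub-rectangles of anti-diagonal extent $r2^{-j}$, allocates a geometrically decaying budget $\sim t\,r^{1/3}2^{-\eta j}$ with $0<\eta<1/3$ to scale $j$, observes that the scale-$j$ moderate-deviation exponent is then $\sim t\,2^{(1/3-\eta)j}$, and sums $\sum_{j\geq 0} 2^{O(j)} e^{-ct\,2^{(1/3-\eta)j}} \le e^{-c_1 t}$ for $t>t_0$ with $c_1,t_0$ absolute. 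The key is that the moderate-deviation exponent grows exponentially in $j$ and so dominates the exponential-in-$j$ entropy; a fixed finite depth of discretization, which is what your ``second layer'' amounts to, lacks this self-reinforcing gain and cannot produce $r$-independent constants.
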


This is Proposition 10.1 of \cite{BSS14}.

\begin{proposition}
\label{t:treesup}
There exist absolute constants $c_1>0$, $r_0>0$ and $t_0>0$ such that we have for all $r>r_0$ and $t>t_0$
\begin{equation}
\label{e:treesupgeneral}
\P\left(\sup_{(u,v)\in \mathcal{S}(U)} T_{u,v}-\E T_{u,v} \geq t r^{1/3}\right)\leq e^{-c_1t}.
\end{equation}
\end{proposition}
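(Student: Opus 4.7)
The main tool is Theorem \ref{t:moddev}, which controls the fluctuations of $T_{u,v}$ pointwise at scale $r^{1/3}$ with exponential tails in the deviation parameter. The challenge is that the rectangle $U$ contains $O(r^{5/3})$ lattice points, yielding $O(r^{10/3})$ pairs in $\mathcal{S}(U)$; a brute-force union bound introduces a polynomial factor that would swamp $e^{-ct}$ for small $t$. I would combine Theorem \ref{t:moddev} with a discretization-and-interpolation scheme using monotonicity of $T_{u,v}$ in the endpoints.

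Fix a spacing $\Delta$ that is a small constant multiple of $r^{1/3}$, and place an anchor grid $G \subset U$ with points at diagonal spacing $\Delta$ and unit transverse spacing, so $|G| = O(r^{4/3})$ and there are $O(r^{8/3})$ ordered anchor pairs. For each anchor pair $(u^{*}, v^{*})$ with slope in a slightly enlarged range to accommodate later perturbations, Theorem \ref{t:moddev} applied at level $\tfrac{t}{4} r^{1/3}$ gives $\P(T_{u^{*}, v^{*}} - \E T_{u^{*}, v^{*}} \geq \tfrac{t}{4} r^{1/3}) \leq e^{-ct/4}$ (the length of the segment is at most $2r$, so the corresponding tail parameter in Theorem \ref{t:moddev} is $\geq t/4$); a union bound across anchor pairs then yields $O(r^{8/3}) e^{-ct/4}$, which is $\leq e^{-c_1 t}$ for $t \geq t_0$ with $t_0$ sufficiently large.

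For general $(u,v) \in \mathcal{S}(U)$, pick anchors $u^{-} \preceq u$ and $v \preceq v^{+}$ in $G$ within diagonal distance $2\Delta$ and matching transverse coordinates up to unit distance. By the standard superadditivity $T_{u^{-}, v^{+}} \geq T_{u^{-}, u} + T_{u,v} + T_{v, v^{+}}$, one obtains
\begin{equation*}
T_{u,v} - \E T_{u,v} \leq \bigl(T_{u^{-}, v^{+}} - \E T_{u^{-}, v^{+}}\bigr) + \bigl(\E T_{u^{-}, u} - T_{u^{-}, u}\bigr) + \bigl(\E T_{v, v^{+}} - T_{v, v^{+}}\bigr) + \Delta_{\mathrm{det}},
\end{equation*}
where $\Delta_{\mathrm{det}} := \E T_{u^{-}, v^{+}} - \E T_{u,v} - \E T_{u^{-}, u} - \E T_{v, v^{+}}$. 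A Taylor expansion of the shape function $(\sqrt{p}+\sqrt{q})^2$ around the slope of $(u,v)$—where the linear terms cancel and the residual comes entirely from the cross term $2\sqrt{pq}$—shows $\Delta_{\mathrm{det}} = O(\Delta) = O(r^{1/3})$ uniformly in the slope range, so by taking $\Delta$ a small enough constant multiple of $r^{1/3}$ we can ensure $\Delta_{\mathrm{det}} \leq \tfrac{t}{4} r^{1/3}$ for $t \geq t_0$. The short-segment lower-tail terms involve passage times at scale $\Delta$, so the threshold $\tfrac{t}{4}r^{1/3}$ corresponds to a tail parameter of order $t r^{2/9}$ in Theorem \ref{t:moddev}; this yields individual probabilities $e^{-c t r^{2/9}}$, which easily absorbs a polynomial union bound over all short-segment endpoint pairs (of which there are $O(r^{5/3})$).

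The main obstacle is verifying that $\Delta_{\mathrm{det}}$ is of order $\Delta$ rather than of order $1$ or of order $r^{2/3}$: the linear contributions in the Taylor expansion cancel by the additivity of $p+q$, while the remaining cross-term contribution is $O(\Delta)$ precisely because the slopes are bounded away from $0$ and $\infty$ so the partial derivatives of $2\sqrt{pq}$ in $(p,q)$ stay bounded. A minor secondary point is to handle pairs $(u,v)$ with $d(v) - d(u) < \Delta$ separately: for such short pairs both $T_{u,v}$ and $\E T_{u,v}$ are $O(\Delta) = O(r^{1/3})$ with simple tail estimates, and the bound is immediate. Combining these ingredients completes the proof.
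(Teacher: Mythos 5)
Your proposal identifies the right ingredients (Theorem~\ref{t:moddev}, superadditivity, Taylor control of the shape function) and the discretization skeleton, but there is a fatal gap in the union bound over anchor pairs. You place anchors at diagonal spacing $\Delta \asymp r^{1/3}$ and unit transverse spacing, giving $|G|=O(r^{4/3})$ and $O(r^{8/3})$ ordered pairs, and then claim that $O(r^{8/3})e^{-ct/4}\le e^{-c_1 t}$ ``for $t\ge t_0$ with $t_0$ sufficiently large.'' This is false when $t_0$ is required to be an absolute constant: the inequality $r^{8/3}e^{-ct/4}\le e^{-c_1 t}$ is equivalent to $t\gtrsim \log r$, whereas the proposition asserts the bound for all $t>t_0$ with $t_0$ independent of $r$. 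In the regime $t_0 < t \ll \log r$ your anchor-pair union bound is not merely wasteful, it exceeds $1$. The same polynomial-in-$r$ prefactor problem reappears (in milder form) in your short-segment bound once one tries to make the constants explicit, because there are $\Theta(r^{5/3})$ short-segment pairs no matter how $\Delta$ is scaled. There is no way to eliminate the polynomial entropy with a single layer of anchors at a fixed scale: any grid fine enough in the diagonal direction to keep the short segments under control has $\mathrm{poly}(r)$ points.

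The resolution — which is what the cited proof in \cite{BSS14} (Proposition~10.5, the ``tree'' construction that gives the lemma its label) uses, and which the paper imports verbatim after substituting Theorem~\ref{t:moddev} for the Poissonian moderate deviation input — is a genuinely multi-scale, chaining-style argument. One places anchors at a sequence of dyadic scales $2^{-k}r$ in the diagonal direction and $2^{-2k/3}r^{2/3}$ in the transverse direction, so that at scale $k$ there are $O(4^{k})$ anchors, and then allocates to scale $k$ a deviation budget $\asymp t\,2^{-\alpha k}r^{1/3}$ for some $\alpha<1/3$. The per-anchor probability at scale $k$ is then $e^{-ct\,2^{(1/3-\alpha)k}}$, and the sum over $k$ of (number of anchors)$\times$(per-anchor tail) is geometrically dominated and bounded by $e^{-c_1 t}$ uniformly in $r$; the budgets sum to $O(t r^{1/3})$. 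Your argument, as written, is essentially the $k=0$ layer of this scheme together with a one-shot jump to unit scale, and it cannot close without the intermediate scales. The Taylor estimate on $\Delta_{\mathrm{det}}$ and the superadditivity step are correct and do reappear in the multi-scale version, but they are not the bottleneck.
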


This is Proposition 10.5 of \cite{BSS14}.

\begin{proposition}
\label{t:treebox}
There exists absolute constants $c_1>0$, $r_0>0$ and $t_0>0$ such that we have for all $r>r_0$ and $t>t_0$
\begin{equation}
\label{e:treeboxgeneral}
\P\left(\sup_{(u,v)\in \mathcal{S}(U)} T^{U}_{u,v}-2|d(u)-d(v)| \leq -t r^{1/3}\right)\leq e^{-c_1t^{1/3}}.
\end{equation}
\end{proposition}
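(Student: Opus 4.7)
The event in the proposition requires that for \emph{every} admissible pair, $T^U_{u,v} - 2|d(u)-d(v)|$ falls below $-tr^{1/3}$ (reading this in parallel with the companion Proposition~\ref{t:treeinf}, which is an $\inf$ statement with the stronger rate $e^{-c_1 t}$). The plan is a two-layer argument: first a per-pair lower bound on $T^U_{u,v}$ with stretched exponential failure probability, and then a chaining/net upgrade to uniformity over $\mathcal{S}(U)$ at the same rate. The weakening from $e^{-c_1 t}$ to $e^{-c_1 t^{1/3}}$ compared to Propositions~\ref{t:treeinf}, \ref{t:treesup} is the price paid for the geometric constraint that the path must stay inside $U$.

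For the per-pair bound, fix $(u,v) \in \mathcal{S}(U)$ with $|d(u)-d(v)| = L$. Partition the diagonal segment from $u$ to $v$ by $k$ equally spaced intermediate points $u = w_0 \prec w_1 \prec \cdots \prec w_k = v$, and let $U_i$ denote the sub-rectangle of $U$ between the anti-diagonals through $w_i$ and $w_{i+1}$. Super-additivity of constrained passage times gives $T^U_{u,v} \geq \sum_{i=0}^{k-1} T^{U_i}_{w_i, w_{i+1}}$. Each $U_i$ has length $L/k$ and width essentially $r^{2/3}$, so the width-to-typical-TF ratio is at least of order $k^{2/3}$; standard transversal fluctuation estimates then imply that the unconstrained geodesic $\Gamma_{w_i, w_{i+1}}$ stays inside $U_i$ except on an event of stretched exponential probability $e^{-ck^{\alpha}}$ for a fixed $\alpha > 0$. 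On the intersection of the $k$ good TF events, $T^{U_i}_{w_i,w_{i+1}} = T_{w_i, w_{i+1}}$, and a Bernstein-type concentration, together with Theorem~\ref{t:moddev} for the marginals, controls the (essentially independent) sum $\sum T_{w_i,w_{i+1}}$ around its mean $\approx 2L$ with fluctuations of order $k^{1/6} r^{1/3}$. Choosing $k$ as an appropriate power of $t$ balances the $k$-factor from the union bound over sub-blocks, the per-block TF failure probability $e^{-ck^{\alpha}}$, and the Bernstein tail, yielding the rate $e^{-c_1 t^{1/3}}$ for the single pair.

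To promote the per-pair estimate to a uniform statement over $\mathcal{S}(U)$, I would use the chaining scheme already employed in the proofs of Propositions~\ref{t:treeinf} and \ref{t:treesup} (equivalently Propositions 10.1 and 10.5 of \cite{BSS14}): discretize $\mathcal{S}(U)$ at mesh scale $1$ (producing polynomially many pairs) but, instead of paying the logarithmic cost of a pure union bound, exploit that adjacent pairs $(u,v)$ and $(u',v')$ with $\|u-u'\|, \|v-v'\| \leq 1$ have $T^U_{u,v}$ and $T^U_{u',v'}$ differing by controllable small increments, allowing the bad event to be propagated along chains of dyadic refinements at negligible cost.

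The main technical obstacle is the calibration in the per-pair step: the three sources of loss (number $k$ of sub-blocks, stretched-exponential TF failure $e^{-ck^{\alpha}}$ per block, and Bernstein tail for the centered sum of $k$ moderate-deviation-scale increments) must balance to produce exactly the $t^{1/3}$ exponent. The value of $\alpha$ from the TF input is what pins down the final rate; any improvement to $\alpha$ would strengthen the exponent, and conversely the $t^{1/3}$ rate is the signature that the TF estimate, rather than the one-point moderate deviation estimate of Theorem~\ref{t:moddev}, is the bottleneck. As the excerpt observes, the proof is quoted from \cite{BSS14} (where it was carried out for Poissonian LPP), and transfers to the exponential setting verbatim once the Poissonian moderate deviation input is replaced by Theorem~\ref{t:moddev}.
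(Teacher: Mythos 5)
The paper offers no proof of this proposition: it is quoted verbatim from \cite{BSS14} (Proposition 12.2 there), with only the remark that the argument transfers once the Poissonian moderate deviation input is replaced by Theorem \ref{t:moddev}. So you are reconstructing from scratch, and your overall architecture --- superadditivity over $k$ slabs $U_i$, independence of the constrained slab weights, a per-pair tail, then an upgrade to uniformity over $\mathcal{S}(U)$ --- is indeed the right general shape (and you correctly read the displayed $\sup$ as the intended $\inf$-type statement, since the literal event is trivial to bound). However, there is a genuine gap at the heart of the per-pair step. You claim that the unconstrained geodesic $\Gamma_{w_i,w_{i+1}}$ stays inside $U_i$ except with probability $e^{-ck^{\alpha}}$, because the width of $U$ exceeds the typical transversal fluctuation scale $(L/k)^{2/3}$ by a factor $k^{2/3}$. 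But the transversal fluctuation estimate (Proposition \ref{t:tf}) controls the fluctuation around the straight segment joining the endpoints, and for pairs $(u,v)$ whose segment runs along the long boundary $|x-y|=r^{2/3}$ of $U$, each block geodesic exits $U_i$ with probability bounded below by a constant, not stretched-exponentially small. These near-boundary pairs are precisely where the proposition has content: for pairs whose segment stays well inside $U$, your argument does show $T^U_{u,v}=T_{u,v}$ with high probability, but then Proposition \ref{t:treeinf} already gives the (stronger, $e^{-c_1t}$) bound, so nothing new is proved. The entire point of the weaker $t^{1/3}$ rate is the cost of the constraint, which bites only at the boundary.

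To close the gap you need a one-block lower bound on the \emph{constrained} quantity $T^{U_i}_{w_i,w_{i+1}}$ itself, valid even when $w_i,w_{i+1}$ sit on $\partial U$ --- for instance via a construction through many independent sub-blocks combined with the positive-probability upper-tail event as in Lemma \ref{l:uppertail}, or an induction over scales; this one-block estimate, not the transversal fluctuation tail, is what actually pins down the $t^{1/3}$ exponent, so your diagnosis of the bottleneck is also off. Two further points need care. First, the Bernstein step requires independence, which holds for the constrained weights $T^{U_i}_{w_i,w_{i+1}}$ (disjoint slabs) but not for the unconstrained $T_{w_i,w_{i+1}}$, so the concentration must be run on the constrained variables with their means compared to $2|d(w_i)-d(w_{i+1})|$ via the one-block estimate. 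Second, the uniformity step cannot be deferred to ``chaining at negligible cost'': the per-pair bound $e^{-ct^{1/3}}$ is independent of $r$, so a union bound over the polynomially many integer pairs in $\mathcal{S}(U)$ is useless; one must discretize at mesh a small constant times $r$ (in $d$) and $r^{2/3}$ (transversally), reduce general pairs to grid pairs by routing through grid points using positivity of the weights, and absorb the resulting deficit into the $tr^{1/3}$ allowance. As written, the proposal asserts rather than establishes both the single-block input and the calibration producing the $t^{1/3}$ rate.
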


This is Proposition 12.2 of \cite{BSS14}. 

We have chosen to state above the statements in their simplest forms, and these are the versions we need more often. However the results established in \cite{BSS14} were more general on several counts, let us now briefly explain the possible generalizations, as we shall also use them occasionally.

First, observe that we have done the centering in Proposition \ref{t:treesup} by $\E T_{u,v}$ whereas in the other results the centering is by $2|d(u)-d(v)|$. As a matter of fact the result holds with the centering $\E T_{u,v}$ in all the cases. It is easy to see using Theorem \ref{t:moddev} that for $(u,v)\in \sS(U)$, we have $\E T_{u,v}=2|d(u)-d(v)|-O(r^{1/3})$, so in the setting above the two centerings are equivalent. 

Second, all these results continue to hold for $r\times Cr^{2/3}$ rectangles also where $C$ is bounded away from $0$ and $\infty$, and also for rectangles with longer pair of sides parallel to some line with slope not necessarily equal to $1$, as long as the slope remains bounded away from $0$ and $\infty$ (in such cases one needs to invoke a stronger version of Theorem \ref{t:moddev} where the only constraint on $h$ is that it is bounded away from $0$ and $\infty$). In such cases, centering by the expectation gives a stronger result when the slope is away from $1$. We shall need to invoke Proposition \ref{t:treesup} in such settings, and therefore we chose to center by $\E T_{u,v}$ in the statement of that result.

Finally, we also do not need in the definition of $\sS(U)$, to restrict to the pairs of points so that the straight line joining them to have slope between $1/2$ and $2$. Again using the stronger version of Theorem \ref{t:moddev} we can include all pairs of points such that the slope of the straight line joining them has slope $(\frac{1}{\psi}, \psi)$ for some arbitrary large but finite constant $\psi$. 

The next proposition controls the transversal fluctuations of the geodesics.

\begin{proposition}
\label{t:tf}
Let $TF_{r}:= \sup\{|x-y|: (x,y)\in \Gamma_{r}\}$ denote the transversal fluctuation of tthe geodesic from $\mathbf{0}$ to $\mathbf{r}$. Then we have for all $k$ sufficiently large 
$$ \P(TF_{r}> kr^{2/3})\leq e^{-ck^2}.$$
\end{proposition}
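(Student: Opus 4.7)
The proof follows the standard exit-point argument for transversal fluctuations driven by the moderate deviation estimate Theorem \ref{t:moddev}; indeed this is essentially Proposition~11.1 of \cite{BSS14}, with the Exponential-weight input replacing the Poissonian moderate deviations used there. Let me sketch the strategy.

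By the reflection symmetry $(x,y) \mapsto (y,x)$, it suffices to bound $\P(\sup_{v \in \Gamma_r}(x-y)(v) \geq K)$ where $K := kr^{2/3}$. Any vertex $v = (m+s, m-s)$ on $\Gamma_r$ with $s \geq K/2$ automatically satisfies $m \in [K/2, r-K/2]$ because $m - s \geq 0$ and $m + s \leq r$. The key deterministic input is the parabolic shape function: Theorem \ref{t:moddev} gives $\E T_{\mathbf{0},(x,y)} = (\sqrt{x}+\sqrt{y})^2 + O(r^{1/3})$ for $x,y = \Theta(r)$, whence a short Taylor expansion around $s=0$ yields
\begin{equation*}
\E T_{\mathbf{0}, \mathbf{r}} - \E T_{\mathbf{0}, v} - \E T_{v, \mathbf{r}} \;\geq\; c\,\frac{s^2 r}{m(r-m)} \;\geq\; c'\, k^2\, r^{1/3}
\end{equation*}
whenever $|s| \geq K/2$ and $m \in [K/2, r-K/2]$. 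In other words, any path through such a $v$ is handicapped in expectation by $\Omega(k^2 r^{1/3})$ relative to the typical geodesic length.

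Now $v \in \Gamma_r$ forces the identity $T_{\mathbf{0},v} + T_{v,\mathbf{r}} = T_{\mathbf{0},\mathbf{r}}$, so for a single fixed $v$ Theorem \ref{t:moddev} immediately gives $\P(v \in \Gamma_r) \leq e^{-ck^2}$, since each of the three point-to-point passage times concentrates at scale $r^{1/3}$ and the mean gap of order $k^2 r^{1/3}$ must be bridged. The remaining task is upgrading this to a bound uniform in the admissible $v$. For this I would dyadically decompose the transversal scale: for $j \geq 0$ let $\mathcal{S}_j := \{v : |s| \in [2^j K/2,\, 2^{j+1}K/2]\}$, which sits inside an $r \times 2^{j+2} K$ rectangle. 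Tile this rectangle by $O(2^j k)$ on-scale $r \times r^{2/3}$ sub-rectangles and apply Proposition~\ref{t:treesup} on each to uniformly control $\sup_{v \in \mathcal{S}_j}\bigl[(T_{\mathbf{0},v} - \E T_{\mathbf{0},v}) + (T_{v,\mathbf{r}} - \E T_{v,\mathbf{r}})\bigr]$ by $O(4^j k^2 r^{1/3})$ with failure probability at most $O(2^j k\, e^{-c\, 4^j k^2})$. Summing over $j \in \{0, 1, \ldots, O(\log r)\}$, the super-geometric decay in $j$ absorbs the polynomial prefactor and produces a final bound $e^{-c k^2}$ for $k$ sufficiently large.

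The main obstacle is exactly this uniformization step: a naive pointwise union bound costs $O(r^2)$, which is useless for $k$ of constant order, so one must exploit the correlated structure of the passage-time field via Proposition~\ref{t:treesup} together with the dyadic decomposition above. Everything else reduces to routine bookkeeping on top of Theorem \ref{t:moddev} and the parabolic shape of the expected passage time.
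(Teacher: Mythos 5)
The paper does not prove this proposition itself; it cites Theorem~11.1 of \cite{BSS14} (also Theorem~2 and Remark~1.5 of \cite{BSS17B}), and that argument is recapitulated in this paper's proof of Proposition~\ref{p:tfnew}. Your pointwise estimate is correct: for fixed $v=(m+s,m-s)$ with $|s|\gtrsim kr^{2/3}$ the parabolic loss $\E T_{\mathbf{0},\mathbf{r}} - \E T_{\mathbf{0},v} - \E T_{v,\mathbf{r}} \gtrsim k^2 r^{1/3}$ combined with the tail $e^{-ct}$ at scale $tr^{1/3}$ from Theorem~\ref{t:moddev} gives $\P(v\in\Gamma_r)\leq e^{-ck^2}$. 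The problem is your uniformization over $v$.

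You decompose dyadically in the transversal coordinate $s$ and claim to control $\sup_{v\in\mathcal{S}_j}\bigl[(T_{\mathbf{0},v}-\E T_{\mathbf{0},v})+(T_{v,\mathbf{r}}-\E T_{v,\mathbf{r}})\bigr]$ by tiling the band into $O(2^jk)$ rectangles of size $r\times r^{2/3}$ and applying Proposition~\ref{t:treesup} on each. This step does not go through as written. Proposition~\ref{t:treesup} bounds $\sup_{(u,w)\in\sS(U)}(T_{u,w}-\E T_{u,w})$ only for pairs $(u,w)$ lying \emph{jointly} inside a single on-scale rectangle $U$, with slope bounded away from $0$ and $\infty$. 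Here $u=\mathbf{0}$ lies far outside each tile $Q$, and as $v$ ranges over $Q$ the slope of the segment $\mathbf{0}$--$v$ sweeps from nearly $0$ (when $m\approx s$) up to nearly $1$ (when $m\approx r$); no single tilted on-scale rectangle, and hence no single invocation of Proposition~\ref{t:treesup}, covers all these pairs at once. A raw pointwise union bound over the $\Theta(2^jk\,r^{5/3})$ lattice points in $\mathcal{S}_j$ also fails against a target of order $e^{-c4^jk^2}$ with $k$ fixed and $r\to\infty$.

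The cited argument avoids this obstacle by chaining dyadically in the \emph{longitudinal} (time) direction rather than transversally: at scale $j$ one inspects the geodesic's transversal position only at the $O(2^j)$ anti-diagonals $x+y=\ell 2^{-j}r$, with a slowly growing tolerance $C_{2,j}$, and conditions on the complement of the failure event at scale $j-1$. A transversal decomposition like your $\mathcal{S}_j$ appears only at the base scale (Lemma~\ref{l:0}); the higher longitudinal scales are what control geodesics making a short, deep excursion at some non-dyadic time, precisely the paths your single transversal layer cannot handle with the tools you invoke. Without this (or an equivalent chaining device) your argument has a genuine gap.
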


This is Theorem 11.1 of \cite{BSS14} (see also Theorem 2 and Remark 1.5 of \cite{BSS17B}). Observe that this result also holds for $r\times Cr^{2/3}$ rectangles also where $C$ is bounded away from $0$ and $\infty$.

\section{}
\label{s:appb}
Here we collect a number of auxiliary results about point-to-line paths, paths constrained to be within an on scale rectangle, and paths with atypical transversal fluctuations. These are rather standard consequences of the results described in Appendix \ref{s:appa}, or their proofs in \cite{BSS14}, and hence we shall often give only sketches of the proofs.

 Let $U$ be as in the previous section. Also recall that $X^*$ denotes the weight of the polymer from $\mathbf{0}$ to the line $x+y=2r$, and $X=T_{r}$.
 
\subsection{Point-to-line polymer length cannot be too large}

\begin{lemma}
\label{l:p2l}
There exist positive constants $r_0,y_0$ and $c>0$ such that for all $r>r_0$ and $y>y_0$ we have
$$\P(X^*-X>yr^{1/3})\leq e^{-cy^{1/3}}.$$
\end{lemma}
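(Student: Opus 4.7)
The plan is to sandwich $X^*$ and $X$ separately about their common first-order value $4r = \E T_{\mathbf{0},\mathbf{r}}$. Writing
$$\{X^* - X > y r^{1/3}\} \subset \left\{X < 4r - \tfrac{y}{2} r^{1/3}\right\} \cup \left\{X^* > 4r + \tfrac{y}{2} r^{1/3}\right\},$$
the first event has probability at most $e^{-cy/2}$ by the lower tail in Theorem \ref{t:moddev}. The content of the lemma then reduces to an upper-tail estimate for the point-to-line weight $X^*$ around $4r$, and that is where I would spend the remainder of the proof.

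To bound the second probability I would slice $\mathbb{L}_r$ into intervals $I_k := \{(r+s, r-s) : k r^{2/3} \leq |s| < (k+1) r^{2/3}\}$ for $k \geq 0$, so that $\{X^* > 4r + (y/2) r^{1/3}\} \subset \bigcup_k \{\sup_{v \in I_k} T_{\mathbf{0}, v} > 4r + (y/2) r^{1/3}\}$. For $k$ in the moderate range $k \leq r^{1/3}/4$, the slope from $\mathbf{0}$ to any $v \in I_k$ lies in $(\tfrac{1}{2}, 2)$, and a direct computation from Theorem \ref{t:moddev} gives $\E T_{\mathbf{0},v} = (\sqrt{r+s}+\sqrt{r-s})^2 \leq 4r - c_1 k^2 r^{1/3}$ uniformly on $I_k$. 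Fitting a suitably tilted on-scale rectangle of dimensions $r \times O(r^{2/3})$ oriented along $\overline{\mathbf{0} v}$ and containing both $\mathbf{0}$ and $I_k$, the generalized form of Proposition \ref{t:treesup} flagged in Appendix \ref{s:appa} (with the centering by $\E T_{\mathbf{0},v}$) yields
$$\P\left(\sup_{v\in I_k} T_{\mathbf{0},v} - \E T_{\mathbf{0},v} \geq \tau r^{1/3}\right) \leq e^{-c\tau}.$$
Taking $\tau = y/2 + c_1 k^2$ absorbs the negative mean drift and bounds the $k$-th event by $e^{-c(y/2 + k^2)}$. Summing this Gaussian-in-$k$ series gives an overall bound of $C e^{-cy/2}$, well within the tolerance required.

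The slightly delicate point is the far range $k > r^{1/3}/4$, where the slope exits $(\tfrac{1}{2}, 2)$ and Proposition \ref{t:treesup} does not apply in its stated form. There, however, $\E T_{\mathbf{0},v}$ drops to order $r$, strictly below $4r$ by a term of order $r$, so the event $\{T_{\mathbf{0},v} > 4r + (y/2) r^{1/3}\}$ represents a huge positive deviation; a pointwise application of the slope-$(1/\psi, \psi)$ version of Theorem \ref{t:moddev} noted in Appendix \ref{s:appa}, followed by a naive union bound over the $O(r)$ candidate endpoints, yields a contribution of order $e^{-cr^{1/3}}$, which is absorbed into $e^{-cy^{1/3}}$ for $y \leq r^{1/3}$ (and the claimed bound is trivial for larger $y$ since $\P(X^* - X > r^{2/3}) \leq e^{-cr^{1/3}}$ by the same argument). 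Assembling the three estimates yields $\P(X^* - X > y r^{1/3}) \leq C e^{-cy/2}$, a bound stronger than the stated $e^{-cy^{1/3}}$. The one obstacle I anticipate is verifying that the generalized Proposition \ref{t:treesup} in the slanted rectangles produces a constant $c$ uniform in $k$; if this requires more care than expected, a fallback is a cruder dyadic point-to-point union bound at scale $r^{2/3}$ using Theorem \ref{t:moddev}, which loses a polynomial-in-$r$ factor but is easily absorbed by the generous stretched-exponential target $e^{-cy^{1/3}}$.
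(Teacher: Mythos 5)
Your proposal is correct, and it takes a genuinely different route from the paper. The paper's proof is two-stage: first it quotes (and sketches) a localization estimate $\P(r^{-2/3}|v_*-\mathbf{r}|\geq M)\leq e^{-cM}$ for the optimal endpoint $v_*$ of the point-to-line polymer, then conditions on $|v_*-\mathbf{r}|\leq Mr^{2/3}$ and invokes Theorem \ref{t:aest} to bound the profile increment $L_{r,s_*}-L_{r,0}$, finally optimizing over $M$. Since Theorem \ref{t:aest} only carries a stretched-exponential rate $e^{-cz^{4/9}}$, the result is the $e^{-cy^{1/3}}$ tail stated in the lemma. Your decomposition $\{X^*-X>yr^{1/3}\}\subset\{X<4r-\tfrac{y}{2}r^{1/3}\}\cup\{X^*>4r+\tfrac{y}{2}r^{1/3}\}$ sidesteps the localization-plus-profile-regularity route entirely: you bound the lower tail of $X$ directly by Theorem \ref{t:moddev}, and the upper tail of $X^*$ by the very slicing argument (intervals $I_k$ of width $r^{2/3}$, parabolic mean drift $-c_1k^2r^{1/3}$ from Theorem \ref{t:moddev}, concentration via the tilted-rectangle form of Proposition \ref{t:treesup}, Gaussian-in-$k$ summation) that the paper uses as the internal engine of its own localization sketch. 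What this buys you: the argument never needs Theorem \ref{t:aest}, so it is more self-contained, and it yields a stronger tail bound $Ce^{-cy/2}$. One small cleanup in the far range $k>r^{1/3}/4$: the slope-$(1/\psi,\psi)$ version of Theorem \ref{t:moddev} still does not reach slopes arbitrarily close to $0$ or $\infty$, so a direct pointwise union bound leaves a sliver near the axes uncovered. The clean fix (and the one the paper uses) is monotonicity of passage times in the endpoint: for $v$ in the triangle below the segment from $(2r,0)$ to $(1.9r,0.1r)$ one has $T_{\mathbf{0},v}\leq T_{\mathbf{0},(2r,0.1r)}$, a single point of moderate slope, and then Theorem \ref{t:moddev} handles it; this replaces your ``naive union bound over $O(r)$ candidate endpoints'' fallback with a one-line domination. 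With that minor repair, your argument is complete and in fact sharper than the stated lemma.
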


\begin{proof}
Let $v_*$ denote the endpoint of the path attaining $X^*$. It is known (see Lemma 4.3 of \cite{FO17}, and also the proof of Lemma 11.3 in \cite{BSS14}) that $\P(r^{-2/3}|v_*-\mathbf{r}|\geq M)\leq e^{-cM}$ for $M$ sufficiently large. Clearly this together with Theorem \ref{t:aest} completes the proof of the Lemma. For completeness let us provide a brief sketch of the proof of the above statement. 

First notice that by Theorem \ref{t:moddev}, with failure probability $e^{-cM}$ the geodesic from $\mathbf{0}$ to $\mathbf{r}$ has length at least $4r-Mr^{1/3}$. So it suffices to show that the event that the length of the geodesic from $0$ to $v_{*}$ be at least $4r-Mr^{1/3}$ for some $v_*$ with $r^{-2/3}|v_*-\mathbf{r}|\geq M$ is exponentially unlikely in $M$. First observe that one can easily rule out the points $v_*$ where $|v_*-\mathbf{r}|$ is linear in $r$. For this, consider the line segment $\mathbf{L}'$ joining $(2r,0)$ to $(1.9 r, 0.1r)$. Clearly $\sup_{v\in \mathbf{L}'}T_{\mathbf{0},v} \leq T_{\mathbf{0}, (2r,0.1r)}$. It is easy to show using Theorem \ref{t:moddev} that is exponentially unlikely (in $M$) that $T_{\mathbf{0}, (2r,0.1r)}\geq 4r-Mr^{1/3}$. So it remains to consider the points $v_*$ such that $0.9 r^{1/3} \geq  r^{-2/3}|v_*-\mathbf{r}|\geq M$.

{Observe that, using Theorem \ref{t:moddev}, $\E T_{\mathbf{0},(r+xr^{2/3}, r-xr^{2/3})}=4r+O(r^{1/3})-cx^{2}r^{1/3}$, for some constant $c>0$. This shows that it is unlikely that $T_{\mathbf{0},(r+xr^{2/3}, r-xr^{2/3})}$ shall be competitive with $T_{r}$, for large $x$. We shall then take a union bound over sub-intervals of the line $x+y=2r$ of length $O(r^{2/3})$. The formal argument is similar to the proof of the second part of Lemma \ref{macrolocal} and we use the same notations as there. For $j$ large ( $j> M$), we get by using Theorem \ref{t:treesup} (applied to parallelograms with solpe bounded away from $0$ and $\infty$) that it is increasingly unlikely (probability at most $e^{-cM}f(j)$ where $f(j)$ is summable in $j$) that the best path from $\mathbf{0}$ to the interval-pair $\L_{r,j+1}\setminus \L_{r,j}$ will have length at least $4r-Mr^{1/3}$. Taking a union bound over values of $j$ upto order $\frac{1}{3} r^{1/3}$ gives the desired result.} 

\end{proof}

\subsection{Estimates for paths constrained in a rectangle}

\begin{lemma}
\label{l:uppertail}
There is $C,c>0$ such that for every $x>0$ we have
$$\P(T^{U}_{\mathbf{0},\mathbf{r}}>4r+xr^{1/3})\geq Ce^{-cx^{3/2}}.$$
\end{lemma}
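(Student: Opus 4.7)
The plan is to bootstrap from the classical lower bound on the upper tail of the \emph{unconstrained} passage time via a super-additivity and independence argument. The key input I take for granted is
\[
\P\bigl(T_{\mathbf{0},\mathbf{n}} > 4n + yn^{1/3}\bigr) \;\geq\; C_0\, e^{-c_0 y^{3/2}} \qquad (y>0,\ n\ge n_0),
\]
which is a classical consequence of the convergence of $(T_n-4n)/n^{1/3}$ to (a scalar multiple of) the GUE Tracy-Widom law together with the uniformity in $n$ afforded by \cite{BFP12}.

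Slice $U$ into $k$ consecutive sub-rectangles $U_1,\ldots,U_k$ of dimensions $(r/k)\times r^{2/3}$, separated by the anti-diagonals $\{x+y=2ir/k\}$, with corner points $v_i=(ir/k,ir/k)$ (so $v_0=\mathbf{0}$, $v_k=\mathbf{r}$). Concatenating best paths in each strip yields the super-additive bound
\[
T^U_{\mathbf{0},\mathbf{r}} \;\geq\; \sum_{i=1}^{k} T^{U_i}_{v_{i-1},v_i},
\]
up to an $O(k)$ correction from vertices on the cut anti-diagonals, which is negligible for the $k$ we shall choose. By translation invariance these summands are i.i.d., and by a disjoint-support argument they are independent, each distributed as $T^{U_1}_{\mathbf{0},v_1}$. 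Since $U_1$ has width $r^{2/3}=k^{2/3}(r/k)^{2/3}$, i.e., it is $k^{2/3}$ times wider than the on-scale transversal fluctuation, Proposition~\ref{t:tf} gives $\P(\text{geodesic from } \mathbf{0} \text{ to } v_1 \text{ exits } U_1)\leq e^{-ck^{4/3}}$. Combined with the input above at scale $r/k$:
\[
\P\!\left(T^{U_1}_{\mathbf{0},v_1} > 4(r/k) + y(r/k)^{1/3}\right) \;\geq\; C_0\, e^{-c_0 y^{3/2}} - e^{-ck^{4/3}} \;\geq\; \tfrac{C_0}{2}\, e^{-c_0 y^{3/2}},
\]
provided $y^{3/2}$ is at most a small constant multiple of $k^{4/3}$.

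Setting $y=x/k^{2/3}$ so that $k\cdot y(r/k)^{1/3}=xr^{1/3}$, independence then yields
\[
\P\bigl(T^U_{\mathbf{0},\mathbf{r}} > 4r + xr^{1/3}\bigr) \;\geq\; \bigl(\tfrac{C_0}{2}\bigr)^{k}\, e^{-c_0 x^{3/2}}.
\]
The validity constraint $y^{3/2}\lesssim k^{4/3}$ translates to $k\gtrsim x^{9/14}$; choosing $k\asymp x^{9/14}$ the prefactor becomes $e^{-O(x^{9/14})}$, which is absorbed into $e^{-cx^{3/2}}$ for large $x$ because $\tfrac{3}{2}>\tfrac{9}{14}$. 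The small-$x$ regime $x\leq x_0$ reduces to $\P(T^U>4r+x_0 r^{1/3})\geq C$, immediate from Propositions~\ref{t:treebox} and~\ref{t:treesup} which together give $T^U_{\mathbf{0},\mathbf{r}}=4r+\Theta(r^{1/3})$ with constant probability. The main conceptual obstacle is the justification of the unconstrained-case input; once that is granted, the super-additive bootstrap combined with the fatter-than-on-scale geometry of each $U_i$ delivers the $e^{-cx^{3/2}}$ rate essentially for free.
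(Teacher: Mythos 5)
Your superadditive decomposition into $k$ blocks is the right idea, and it is structurally the same device the paper uses (the paper slices into $1/\rho$ blocks of length $\rho n$, each block having width ratio $\rho^{-2/3}$ relative to on-scale, exactly your $k^{2/3}$ with $k=1/\rho$). However, there is a genuine gap in the input you take for granted. You assert
\[
\P\bigl(T_{\mathbf{0},\mathbf{n}} > 4n + yn^{1/3}\bigr) \geq C_0 e^{-c_0 y^{3/2}}, \qquad y>0,\ n\ge n_0,
\]
as "a classical consequence of convergence to Tracy--Widom together with the uniformity afforded by \cite{BFP12}". That justification does not work for the range of $y$ you actually use. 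Weak convergence gives the lower bound only at \emph{fixed} $y$; and the uniform estimates from \cite{BFP12} (i.e.\ Theorem~\ref{t:moddev}) are one-sided \emph{upper} bounds on the tail probability, $\P(|T_n-4n|\geq tn^{1/3})\leq e^{-ct}$, and say nothing about a matching lower bound with the moderate-deviation exponent $3/2$. Since you choose $k\asymp x^{9/14}$, each block has $y\asymp x^{4/7}\to\infty$ as $x\to\infty$, so you genuinely need the uniform-in-$y$ moderate-deviation lower bound for the \emph{unconstrained} passage time. That result is known for exponential LPP (from explicit determinantal/RH analysis), but it is not among the inputs this paper allows itself, and it is far stronger than what the lemma requires.

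The paper sidesteps exactly this issue by taking the \emph{maximal} block count $k=1/\rho=x^{3/2}$ so that each block only needs to beat $4\rho n$ by one unit $(\rho n)^{1/3}$, i.e.\ $y=1$. Then weak convergence to Tracy--Widom (a genuinely soft input) gives a per-block success probability $\beta>0$ independent of $\rho$, the transversal-fluctuation penalty for constraining to width $\rho^{-2/3}(\rho n)^{2/3}$ tends to $0$ as $\rho\to 0$, and the final bound $\beta^{1/\rho}=\beta^{x^{3/2}}=e^{-cx^{3/2}}$ comes entirely from the \emph{number of blocks} rather than from per-block tail decay. If you replace your choice $k\asymp x^{9/14}$ with $k\asymp x^{3/2}$ (so $y$ stays fixed), your argument reduces to the paper's and needs only inputs already available. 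You do gain a sharper prefactor with your choice, but you pay for it with an unsupported input.

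A minor additional point: your handling of the small-$x$ regime is not quite right as stated. Propositions~\ref{t:treebox} and~\ref{t:treesup} control the lower tail of $T^U$ and the upper tail of unconstrained $T_{u,v}$ respectively; neither gives a constant lower bound on $\P(T^U_{\mathbf{0},\mathbf{r}} > 4r + x_0 r^{1/3})$. The clean way, which the paper uses, is simply to prove the bound for all $x\geq x_0$ and then observe that for $x<x_0$ monotonicity gives $\P(T^U>4r+xr^{1/3})\geq \P(T^U>4r+x_0r^{1/3})\geq Ce^{-cx_0^{3/2}}$, which is absorbed by shrinking the constant $C$.
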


\begin{proof}  
Clearly it suffices to prove the lemma for $x$ sufficiently large, fix such an $x$. Let $\rho>0$ be a fixed constant (to be chosen later depending on $x$ sufficiently small). Just for this proof let us denote $T^{\rho}_{n}$ to be the length of the best path from $\mathbf{0}$ to $\mathbf{n}$ constrained to be in the rectangle $0\leq x+y\leq 2n, |x-y|\leq \rho n^{2/3}$. Observe that for $\rho$ sufficiently small we have 
$$\liminf_{n\to \infty}\P[T_{\rho n}^{\rho^{-2/3}}\geq 4\rho n + (\rho n)^{1/3}] \geq \beta >0$$
where $\beta$ is independent of $\rho$. This follows from the fact that $\P[T_{\rho n}\geq 4\rho n + (\rho n)^{1/3}]$ is bounded below independent of $\rho$ (by Tracy-Widom scaling limit) and $\P[T_{\rho n} \neq T_{\rho n}^{\rho^{-2/3}}] \to 0$ as $\rho \to 0$. Now as $T_{n}^{1}$ stochastically dominates sum of $\frac{1}{\rho}$ many independent copies of $T_{\rho n}^{\rho^{-2/3}}$ it follows that 
$$\liminf_{n\to \infty}\P[T_{n}^{1} >4n+\rho^{-2/3}n^{1/3}] \geq \beta ^{1/\rho}.$$
The result follows by choosing $\rho=x^{-3/2}$.
\end{proof}

Observe also that the same result with the same proof holds if $U$ is replaced by an $r\times qr^{2/3}$ rectangle where $q$ is bounded away from $0$ and $\infty$. Recall the rectangle $R_{\theta}$. The next result bounds the moments of $T_{\mathbf{0}, \mathbf{r}}^{R_{\theta}}$.

\begin{lemma}
\label{l:4th2}
There exists $c>0$ such that for all $\theta$ and for all $r$ sufficiently large depending on $\theta$ we have 
$$ \E T_{\mathbf{0}, \mathbf{r}}^{R_{\theta}} \geq 4r-c\theta^{-1}r^{1/3}.$$
Further we have 
$$\E |T_{\mathbf{0}, \mathbf{r}}^{R_{\theta}} -4r|^{4} \leq C\theta^{-4}r^{4/3}.$$
\end{lemma}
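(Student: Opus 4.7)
I would decompose $R_\theta$ into $N := \lfloor \theta^{-3/2}\rfloor$ on-scale sub-rectangles and apply Proposition \ref{t:treebox} to each. Partition $R_\theta$ by the anti-diagonals $\mathcal{L}_i := \{x+y = 2i\theta^{3/2}r\}$ for $i=0,1,\ldots,N$, and let $R_\theta^{(i)}$ denote the strip between $\mathcal{L}_i$ and $\mathcal{L}_{i+1}$. Each such strip has diagonal length $\theta^{3/2}r$ and transversal width $\theta r^{2/3} = (\theta^{3/2}r)^{2/3}$, hence is \emph{exactly} on-scale. Pick anchor points $u_i := (i\theta^{3/2}r, i\theta^{3/2}r) \in \mathcal{L}_i$, so that $u_0 = \mathbf{0}$ and (up to a boundary correction absorbed into constants) $u_N = \mathbf{r}$. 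Concatenating the best constrained paths in each $R_\theta^{(i)}$ produces a valid path in $R_\theta$ from $\mathbf{0}$ to $\mathbf{r}$, giving the sub-additivity
\begin{equation*}
T^{R_\theta}_{\mathbf{0}, \mathbf{r}} \geq \sum_{i=0}^{N-1} T^{R_\theta^{(i)}}_{u_i, u_{i+1}}.
\end{equation*}
Since $(u_i, u_{i+1}) \in \mathcal{S}(R_\theta^{(i)})$ (the slope is exactly $1$), Proposition \ref{t:treebox} applied to each on-scale sub-rectangle gives, for the deficits $\xi_i := 4\theta^{3/2}r - T^{R_\theta^{(i)}}_{u_i, u_{i+1}}$, the tail bound $\P(\xi_i > s(\theta^{3/2}r)^{1/3}) \leq e^{-c s^{1/3}}$ for $s$ beyond some universal threshold.

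For the first assertion, integrating this tail yields $\E (\xi_i)_+ \leq C \theta^{1/2} r^{1/3}$. Since $\E \xi_i \leq \E (\xi_i)_+$, summing over $i$ gives
\begin{equation*}
\E T^{R_\theta}_{\mathbf{0}, \mathbf{r}} \geq 4r - N \cdot C \theta^{1/2} r^{1/3} = 4r - C \theta^{-1} r^{1/3}.
\end{equation*}

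For the fourth moment bound, split into one-sided pieces. The upper tail is immediate: $T^{R_\theta}_{\mathbf{0},\mathbf{r}} \leq T_r$ and $\E (T_r - 4r)_+^4 = O(r^{4/3})$ via Theorem \ref{t:moddev}. For the lower tail, sub-additivity gives $(4r - T^{R_\theta}_{\mathbf{0},\mathbf{r}})_+ \leq \sum_i (\xi_i)_+$, so by convexity $(4r - T^{R_\theta}_{\mathbf{0},\mathbf{r}})_+^4 \leq N^3 \sum_{i=0}^{N-1} (\xi_i)_+^4$. A second tail integration yields $\E (\xi_i)_+^4 \leq C(\theta^{3/2}r)^{4/3} = C\theta^2 r^{4/3}$, and summing,
\begin{equation*}
\E (4r - T^{R_\theta}_{\mathbf{0}, \mathbf{r}})_+^4 \leq N^4 \cdot C \theta^2 r^{4/3} = C \theta^{-4} r^{4/3},
\end{equation*}
since $N^4 = \theta^{-6}$. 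Combining with the upper-tail bound yields the second claim.

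The main thing to verify is the clean cancellation in the fourth moment: the convexity cost $N^3 = \theta^{-9/2}$ and the $N$ terms in the sum combine to $N^4 = \theta^{-6}$, exactly absorbed by the $\theta^2$ gain from $\E (\xi_i)_+^4$ being at scale $(\theta^{3/2}r)^{4/3}$. I expect no substantive obstacle beyond this bookkeeping: the $e^{-c s^{1/3}}$ tail in Proposition \ref{t:treebox} is weak but still integrable, so both the first and fourth moment tail-integrals converge with constants independent of $\theta$, and the boundary rounding when $\theta^{-3/2}$ is not an integer is absorbed into the universal constants.
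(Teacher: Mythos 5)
Your proposal is correct and follows the same block decomposition as the paper: partition $R_\theta$ into $\theta^{-3/2}$ on-scale sub-rectangles, set $Z_i := T^{R_\theta^{(i)}}_{u_i,u_{i+1}}$, and use sub-additivity plus Proposition \ref{t:treebox}. The only small difference is in the fourth-moment step: the paper invokes the independence of the $Z_i$ to control $\E|\sum Z_i - 4r|^4$, whereas you bypass independence entirely with the deterministic bound $(\sum_i (\xi_i)_+)^4 \leq N^3\sum_i(\xi_i)_+^4$ followed by per-block tail integration; both routes land on the dominant scale $N^4\cdot(\theta^{3/2}r)^{4/3}=\theta^{-4}r^{4/3}$, so your version is a valid (and slightly more robust, since it needs no independence) variant of the same argument.
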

\begin{proof}
Let $Z_{i}:=T^{R_{\theta}}_{\mathbf{i\theta^{3/2}r}, \mathbf{(i+1)\theta^{3/2}r}}$. First part of the lemma follows from noticing that $T_{\mathbf{0}, \mathbf{r}}^{R_{\theta}}\geq \sum Z_i$, and Proposition \ref{t:treebox}. For the second part of the lemma, observe that 
$$|T_{\mathbf{0}, \mathbf{r}}^{R_{\theta}} -4r|^{4} \leq  |\sum Z_i-4r|^{4}+ |T_{\mathbf{0},\mathbf{r}}-4r|^{4}.$$ 
That the expectation of the second term above is $O(r^{4/3})$ follows from Proposition \ref{t:treesup}. That the expectation of the first term is is $O(\theta^{-4}r^{4/3})$ is a consequence of the fact that $Z_{i}$'s are i.i.d.\ and Proposition \ref{t:treebox}.
\end{proof}

Finally we have the following lemma which compares the point-to-point distance constrained within a rectangle to the unconstrained point-to-side distance.

\begin{lemma}
\label{l:sideregularity}
Let $U'$ be the rectangle $\{0\leq x+y\leq r, |x-y|\leq r^{2/3}\}$. For $u$ in $U^*$, let $T^*_{u}$ denote the weight of the best path from $u$ to the line $x+y=2r$. Then we have for all $r$ sufficiently large,
$$ \E[\sup_{u\in U'} T^*_u-T_{u,\mathbf{r}}^{U}]^2=O(r^{2/3}).$$
\end{lemma}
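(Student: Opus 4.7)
The plan is to show that $M := \sup_{u\in U'}(T^*_u - T^U_{u,\mathbf{r}})$ has a stretched-exponential tail at scale $r^{1/3}$, i.e.\ $\P(M > y r^{1/3}) \leq C e^{-c y^{\alpha}}$ for some $\alpha>0$ and all $y$ large; from this the desired bound $\E M^2 = O(r^{2/3})$ follows by direct integration after the change of variable $y = r^{1/3} z$. Using the pointwise inequalities $T^*_u \geq T_{u,\mathbf{r}} \geq T^U_{u,\mathbf{r}}$ we split
\[
M \;\leq\; \underbrace{\sup_{u\in U'}(T^*_u - T_{u,\mathbf{r}})}_{=:M_1} \;+\; \underbrace{\sup_{u\in U'}(T_{u,\mathbf{r}} - T^U_{u,\mathbf{r}})}_{=:M_2},
\]
and I will bound $M_1$ and $M_2$ separately.

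The estimate for $M_2$ falls out of the appendix results. Decomposing $T_{u,\mathbf{r}} - T^U_{u,\mathbf{r}} \leq (T_{u,\mathbf{r}} - \E T_{u,\mathbf{r}}) + (\E T_{u,\mathbf{r}} - T^U_{u,\mathbf{r}})$ and noting that pairs $(u, \mathbf{r})$ with $u\in U'$ lie in $U$ with slopes in $(1/2, 2)$, Proposition \ref{t:treesup} bounds the supremum of the first difference by $tr^{1/3}$ with probability at least $1-e^{-ct}$, and Proposition \ref{t:treebox} bounds the supremum of the second by $tr^{1/3}$ with probability at least $1-e^{-ct^{1/3}}$. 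Thus $\P(M_2 > 2tr^{1/3}) \leq 2e^{-ct^{1/3}}$, which is more than enough.

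For $M_1$ the first step is endpoint localization. Let $v^*_u$ denote the endpoint on $\{x+y=2r\}$ of the point-to-line geodesic from $u$. By polymer ordering the map $u \mapsto v^*_u$ is monotone, so it suffices to check $v^*_u \in \mathbb{L}_{r,K}$ for the (finitely many) extremal corners of $U'$ and a large constant $K$. For each such corner this follows by translation invariance from the endpoint-localization argument sketched in the proof of Lemma \ref{l:p2l}, giving $\P(\mathcal{E}_K^c) \leq e^{-cK}$ where $\mathcal{E}_K := \{v^*_u \in \mathbb{L}_{r,K}\text{ for all }u\in U'\}$. On $\mathcal{E}_K$ we have $M_1 \leq \sup_{u\in U',\, v\in \mathbb{L}_{r,K}}(T_{u,v} - T_{u,\mathbf{r}})$, which I decompose into a deterministic piece and two centered fluctuations. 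A direct expansion of $\E T_{u,v}$ via Theorem \ref{t:moddev}, using $|u_1-u_2|\leq r^{2/3}$ and $|v_1-v_2|\leq Kr^{2/3}$, yields $|\E T_{u,v} - \E T_{u,\mathbf{r}}| = O(K^2 r^{1/3})$ uniformly: the would-be linear-in-$s$ term in the expansion of the characteristic $(\sqrt{a+s}+\sqrt{b-s})^2$ essentially cancels since $a,b=\Theta(r)$ and $|a-b|\leq r^{2/3}$, leaving a quadratic correction of the right order. The two centered fluctuations are then controlled uniformly by Proposition \ref{t:treesup} applied to a rectangle enclosing $U'\cup \mathbb{L}_{r,K}$. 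Combining these bounds with $\P(\mathcal{E}_K^c)\leq e^{-cK}$ delivers the stretched-exponential tail for $M_1$.

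The main obstacle is the uniform endpoint localization. Lemma \ref{l:p2l} as stated concerns only $u = \mathbf{0}$, and one must extend the proof of its endpoint estimate to all $u\in U'$; this should be doable via translation invariance together with monotonicity of the geodesic tree in the starting point, but some care is needed because for $u$ near the corners of $U'$ the typical endpoint on $\{x+y=2r\}$ can drift by $\Theta(r^{2/3})$ away from $\mathbf{r}$, and one must verify that this drift is absorbed by taking $K$ a sufficiently large constant. Once that is in hand, combining the tail bounds on $M_1$ and $M_2$ and integrating yields $\E M^2 = O(r^{2/3})$.
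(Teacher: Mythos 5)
Your overall decomposition is essentially the paper's. The paper splits $\sup_{u}(T^*_u - T^U_{u,\mathbf{r}})$ by inserting the deterministic intermediate $2(2r-d(u))$, bounds $\E[\sup_u(T^*_u - 2(2r-d(u)))]^2$ via the argument of Lemma~\ref{l:p2l} together with Proposition~\ref{t:treesup}, and bounds $\E[\sup_u(2(2r-d(u)) - T^U_{u,\mathbf{r}})]^2$ via Proposition~\ref{t:treebox}. You insert the random intermediate $T_{u,\mathbf{r}}$ instead; since $T_{u,\mathbf{r}}$ and $2(2r-d(u))$ differ by an $O(r^{1/3})$-controlled amount uniformly over $\sS(U)$ (Propositions~\ref{t:treeinf} and~\ref{t:treesup}, plus the observation that $\E T_{u,v}=2|d(u)-d(v)|-O(r^{1/3})$ noted in Appendix~\ref{s:appa}), the two intermediates are interchangeable. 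Your $M_2$ analysis is exactly the paper's second part, and your mean computation for $M_1$ is correct.

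The part of your plan that does not hold up as stated is the claim that monotonicity reduces the endpoint-localization event $\{v^*_u\in\L_{r,K}\ \forall u\in U'\}$ to ``finitely many extremal corners of $U'$.'' The polymer-ordering monotonicity you invoke gives that $v^*_u$ is monotone in the anti-diagonal coordinate of $u$ \emph{at a fixed level} $d(u)$, so it reduces the supremum over $u$ to the two long sides of $U'$ — still $\Theta(r)$ starting points at $\Theta(r^{1/3})$ distinct levels, not finitely many. A union bound with a fixed constant $K$ then costs a factor of $r$ (or $r^{1/3}$) against an $e^{-cK}$ tail, which does not produce the stretched-exponential decay in $y$ needed for $\E M_1^2 = O(r^{2/3})$: integrating a bound that is only nontrivial for $y\gtrsim\log r$ contributes $\log^{O(1)} r$, not $O(1)$. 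The cleaner route, consistent with what the paper gestures at, is to avoid isolating the geodesic endpoint altogether: write $T^*_u - T_{u,\mathbf{r}} = \max_{k\geq 0}\sup_{v\in \L_{r,k+1}\setminus\L_{r,k}}(T_{u,v} - T_{u,\mathbf{r}})$, observe that $\E T_{u,v} - \E T_{u,\mathbf{r}}\leq -c(k-2)^2 r^{1/3}$ for $k\geq 3$ uniformly over $u\in U'$ (your own mean expansion, now keeping track of the sign), and then apply Propositions~\ref{t:treesup} and~\ref{t:treeinf} — which already control the supremum over \emph{all} pairs $(u,v)$ in an on-scale rectangle, so no separate argument over starting points is needed — to each index $k$, summing the resulting $e^{-c\max(y,(k-2)^2)}$ tails. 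You actually invoke Proposition~\ref{t:treesup} on the rectangle enclosing $U'\cup\L_{r,K}$ in the middle of your argument; pushing that through directly with the interval decomposition in $k$ removes the need for any uniform endpoint localization and closes the gap.
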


\begin{proof}
The proof is done by showing separately $\E[\sup_{u\in U'} T^*_u -2(2r-d(u))]^2=O(r^{2/3})$ and $\E[\sup_{u\in U'} T_{u,\mathbf{r}}^{U}-2(2r-d(u))]^2=O(r^{2/3})$. The first part is done as in Lemma \ref{l:p2l} using the arguments of Proposition \ref{t:treesup}, and the second part follows from Proposition \ref{t:treebox}. We omit the details.
\end{proof}

\subsection{Bounds on paths with large transversal fluctuation}
From Theorem \ref{t:tf}, we know that any the polymer from $\mathbf{0}$ to $\mathbf{r}$ has transversal fluctuation $O(r^{2/3})$ with some probability. Even something stronger is true: any path with transversal fluctuation more than a large constant $h$ times $r^{2/3}$ is with high probability be smaller than $4r-f(h)r^{1/3}$ where $f(h)$ can be made arbitrarily large by taking $h$ large. We prove the following more general result. 

\begin{proposition}
\label{p:tfnew}
For any positive $\rho$, let Let $U_{\rho}=U_{\rho, r}$ be the rectangle $\{(x,y): 0\leq x+y \leq 2r, |x-y|\leq \rho r^{2/3}\}$. Let $C$ and $\alpha$ be fixed large positive constants. Let $A$ and $B$ be the bottom and top sides of $U_{C}$. For $C_2>0$
$$T^*=\sup\{\ell(\gamma): \gamma\in \Gamma_{A,B,C_2}\}$$
where $\Gamma_{A,B,C_2}$ denotes the set of all paths from some $u\in A$ to some $v\in B$ that exists $U_{C_2}$. Then  there exists $\beta>0$ (depending on $C,\alpha$) such that for all sufficiently large $r$ we have
$$\P(T^*\geq 4r-C_1r^{1/3}) \leq e^{-cC_2^{1/3}}$$
for some constant $C$ where $C_1=C^{\alpha}$ and $C_2=C^{\beta}$.  
\end{proposition}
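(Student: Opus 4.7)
The plan is to bound $T^*$ by a supremum of two-point passage times through a ``boundary crossing'' point $w$, and then quantify the resulting mean deficit using the explicit formula for $\E T_{(0,0),(n+m,n-m)}$. Any $\gamma\in\Gamma_{A,B,C_2}$ must pass through some vertex $w$ with $|w_1-w_2|\geq C_2 r^{2/3}$; splitting $\gamma$ at $w$ gives $\ell(\gamma)\leq T_{u_\gamma,w}+T_{w,v_\gamma}$ for the endpoints $u_\gamma\in A$, $v_\gamma\in B$. Therefore
\[
T^*\;\leq\;\sup\bigl\{\,T_{u,w}+T_{w,v}:\;u\in A,\;v\in B,\;w\notin U_{C_2}\,\bigr\}.
\]

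Next I would compute the mean. Writing $w=(s+hr^{2/3}/2,\,s-hr^{2/3}/2)$ with $h\geq C_2$ and $s\in[hr^{2/3}/2,\,r-hr^{2/3}/2]$, and using $\E T_{(0,0),(n+m,n-m)}=(\sqrt{n+m}+\sqrt{n-m})^2=2n+2\sqrt{n^2-m^2}$, a direct computation (optimizing over $u\in A$, $v\in B$) gives
\[
\sup_{u\in A,\,v\in B}\bigl(\E T_{u,w}+\E T_{w,v}\bigr)\;\leq\; 4r - c\,(h-C)^2\, r^{1/3},
\]
with the worst (smallest deficit) case attained at $s=r/2$, so the bound scales like $4r-cC_2^2 r^{1/3}$ at the minimal fluctuation scale $h=C_2$. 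I would pair this with Proposition \ref{t:treesup}, applied to the parallelograms from $A$ to a neighborhood of $w$ (and symmetrically from $w$ to $B$), to obtain the concentration estimate $\P(T_{u,w}+T_{w,v}-\E\geq t r^{1/3})\leq e^{-ct}$, valid once the slopes of these parallelograms stay bounded away from $0$ and $\infty$.

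To close the argument, I would dyadically decompose the set of candidate $w$ by $h_j=C_2\cdot 2^j$ and within each annulus discretize into cells of natural scale $r^{2/3}\times r^{2/3}$. Taking $\beta$ sufficiently large in terms of $\alpha$ ensures $cC_2^2\gg 2C_1$, so that the excess fluctuation demanded per cell is at least $c h_j^2 r^{1/3}/2$, contributing per-cell failure probability at most $e^{-c h_j^2/2}$. Summing over the discretization cells at each scale and over $j\geq 0$ yields a total failure probability bounded by a polynomial-in-$r$ factor times $e^{-cC_2^2/2}$, which in the permitted regime of $r$ can be absorbed into the target $e^{-cC_2^{1/3}}$ provided $\beta$ is taken large enough.

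The main obstacle will be making this union bound robust: the polynomial (in $r$) factor coming from discretization of the boundary of $U_{C_2}$ and beyond must be dominated by the Gaussian-type tail $e^{-cC_2^2}$, which forces $\beta$ to be taken much larger than $\alpha$ and compared to a $\log r$ budget in the ``sufficiently large $r$'' regime. A secondary technicality is handling points $w$ near the extreme $s\approx hr^{2/3}/2$ or $s\approx r-hr^{2/3}/2$, where the slope from $A$ to $w$ approaches $0$ or $\infty$; for these I would either invoke the slope-general variant of Theorem \ref{t:moddev} noted in Appendix \ref{s:appa}, or simply observe that the upper bound on $\E T_{A,w}+\E T_{w,B}$ is already far smaller than $4r-C_1 r^{1/3}$ in that regime, rendering the target bound automatic.
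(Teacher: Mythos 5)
The core obstruction in your plan is the union bound over crossing positions, and the remedy you suggest does not work. At the smallest transversal scale $h\approx C_2$, the annulus $\{C_2 r^{2/3}\le |w_1-w_2|<2C_2 r^{2/3},\ 0\le d(w)\le 2r\}$ contains on the order of $r^{1/3}$ cells of size $r^{2/3}\times r^{2/3}$ (one for each height $s=d(w)/2$ modulo $r^{2/3}$). The per-cell bound coming from Theorem \ref{t:moddev}/Proposition \ref{t:treesup} is a quantity like $e^{-cC_2^2}$ (or a stretched-exponential variant) that is a fixed constant once $C,\alpha,\beta$ are fixed; it does not depend on $r$. Hence your union bound produces something of the form $r^{1/3}\,e^{-cC_2^2}$, which tends to infinity with $r$. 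Choosing $\beta$ larger shrinks the constant $e^{-cC_2^2}$ but does not touch the $r^{1/3}$ factor, and ``$r$ sufficiently large'' makes the problem worse, not better. So the concluding sentence of your union-bound paragraph is incorrect: there is no choice of $\beta$ (a constant) that absorbs a polynomial in $r$ uniformly.

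What is missing is a multi-scale chaining argument over dyadic levels in the \emph{longitudinal} (diagonal) direction, which is how the paper (following Theorem 11.1 of \cite{BSS14}) avoids any polynomial-in-$r$ accumulation. The paper defines a slowly growing sequence $C_{2,j}$, lets $\cB_{\ell,j}$ be the event that some competitive path escapes transversal distance $C_{2,j}r^{2/3}$ at the dyadic height $\ell\,2^{-j}r$, and shows $\P(\cB_{\ell,j}\cap\cB_{j-1}^c)\lesssim 4^{-j}e^{-cC_2^{1/3}}$. The key point is that on $\cB_{j-1}^c$ the path is already pinned near the diagonal at the two neighbouring coarser dyadic heights, so the offending escape is confined to a sub-path of longitudinal length $2^{-(j-1)}r$ whose natural transversal scale is $2^{-2(j-1)/3}r^{2/3}$; escaping to the essentially unchanged transversal distance $\approx C_2 r^{2/3}$ from there is increasingly unlikely in $j$. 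Summing $2^j\cdot 4^{-j}$ over $j$ converges, and the base case $j=0$ involves only the single mid-height line, where your one-dimensional union bound over $k$ (transversal offset) is exactly what is needed. Your mean computation and your identification of the extreme-$s$ cases as harmless are fine; it is the replacement of the flat two-dimensional union bound by this conditional dyadic decomposition that is essential, and without it the argument does not close.
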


It will follow from the proof that it suffices to take $C_2$ to be a large constant times $(C+C_1)^2$.

The proof of this proposition is essentially the same as that of Proposition \ref{t:tf} (Theorem 11.1 in \cite{BSS14}) although the statement is cast in a different language. Clearly one way to bound the transversal fluctuation is to show that any path with large transversal fluctuation has a much smaller weight than the weight of a typical polymer, this is the way the proof in \cite{BSS14} went. For the convenience of the reader we recall the main steps and indicate how the same argument can be used to prove Proposition \ref{p:tfnew}. For $j\geq 0$, let $C_{2,j}=\frac{C_2}{10^5}\prod_{i=0}^{j-1}(1+2^{-i/10})$. For $\ell=1,2, 3\ldots, 2^{j+1}-1$, let $\cB_{\ell,j}$ denote the event that there exists a path $\gamma$ from some $u$ in $A$ to some $v$ in $B$ with $\ell(\gamma)\geq 4r-C_1r^{1/3}$ and $\gamma$ intersects the line $x+y=\ell 2^{-j}r$ at some point $w=(w_1,w_2)$ with $|w_1-w_2|\geq C_{2,j}$. Let $\cB_{j}=\cup_{\ell} \cB_{\ell,j}$. Since the event in the proposition is in the event $\cup_{j=0}^\infty \cB_{j},$ it suffices to prove the following two lemmas. 
\begin{lemma}
\label{l:0}
In the above set-up we have $\P(\cB_0)\leq e^{-cC_2}$ if $C_2$ is a sufficiently large power of $C_1$. 
\end{lemma}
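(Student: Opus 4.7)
The plan is to show that any path realizing the event $\cB_0$ must consist of two halves each of which deviates substantially above its expected weight, which is exponentially unlikely. To set up, I fix a triple $(u,w,v)$ with $u=(a,-a)\in A$ (so $|a|\le Cr^{2/3}/2$), $w=(r/2+b,r/2-b)$ on the midline with $|b|\ge (C_2/(2\cdot 10^5))r^{2/3}$, and $v=(r+c,r-c)\in B$ (so $|c|\le Cr^{2/3}/2$), and note that any path from $u$ to $v$ through $w$ has weight at most $T_{u,w}+T_{w,v}$. Using the classical formula $\E T_{\mathbf{0},(N_1,N_2)}=(\sqrt{N_1}+\sqrt{N_2})^2$ applied to the two halves, I would compute
\begin{align*}
\E T_{u,w}+\E T_{w,v} = 2r+2\sqrt{(r/2)^2-(b-a)^2}+2\sqrt{(r/2)^2-(c-b)^2}\le 4r-\tilde c\, C_2^{2}r^{1/3},
\end{align*}
using $r/2-\sqrt{(r/2)^2-x^2}\ge x^{2}/r$ and the fact that $|b|$ dominates $|a|,|c|$ once $C_2$ is much bigger than $C$.

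Since $C_2$ is a sufficiently large power of $C_1$, one has $\tilde c\, C_2^{2}\ge 2C_1$, so on the event that some path through $(u,w,v)$ reaches weight $\ge 4r-C_1r^{1/3}$, at least one of $T_{u,w}-\E T_{u,w}$ or $T_{w,v}-\E T_{w,v}$ must exceed $(\tilde c/4)C_2^{2}r^{1/3}$. I would then invoke Theorem \ref{t:moddev} (and the parallelogram version of Proposition \ref{t:treesup}, using the extension mentioned in the remarks after that proposition, which permits slopes bounded away from $0$ and $\infty$) to bound the probability of such a deviation, uniformly over $u,w,v$ lying in fixed on-scale blocks, by $\exp(-c_0\tilde c\, C_2^{2}/4)$.

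The remaining step is a union bound over the possible positions of $u,v$ and $w$. I would partition $|b|\in[(C_2/2\cdot 10^5)r^{2/3},r/2]$ dyadically, indexing the $k$-th scale by $|b|\asymp 2^{k}C_2 r^{2/3}$, and within each dyadic scale decompose the admissible $w$ into $O(2^{k}C_2)$ on-scale blocks while decomposing $A$ and $B$ into $O(C)$ blocks each. In the $k$-th scale the expected drop improves to $\gtrsim 4^{k}C_2^{2}r^{1/3}$, so the probability estimate becomes $\exp(-c\cdot 4^{k}C_2^{2})$; summing the resulting geometric series
\begin{equation*}
\sum_{k\ge 0} C^{2}\cdot 2^{k}C_2\cdot e^{-c\cdot 4^{k}C_2^{2}}\ \le\ e^{-c'C_2^{2}}\ \le\ e^{-cC_2}
\end{equation*}
for $C_2$ chosen a sufficiently large power of $C_1$ (hence of $C$) yields the desired bound.

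The principal obstacle is the careful bookkeeping of the union bound, in particular controlling the regime where $|b|$ approaches $r/2$ so that the slope of $u\to w$ is extreme and Proposition \ref{t:treesup} must be invoked in its extended form; but in that regime the expected-weight drop is so large that even the crudest deviation bound suffices, as the dyadic sum remains dominated by the smallest-scale term. This is structurally identical to the proof of Theorem 11.1 of \cite{BSS14} (Proposition \ref{t:tf} here), and the present statement is really a quantitative recasting of that argument with the constants $C_1,C_2$ made explicit.
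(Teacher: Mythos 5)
Your proposal is correct and follows essentially the same route as the paper's proof: split any candidate path at the crossing point $w$ on the line $x+y=r$, use the moderate-deviation expectation formula (Theorem \ref{t:moddev}) to show that the combined expectation $\E T_{u,w}+\E T_{w,v}$ drops by $\gtrsim C_2^2 r^{1/3}$ below $4r$ once $C_2$ dominates $C$ and $C_1$, and then apply the on-scale supremum bound (Proposition \ref{t:treesup}) with a union bound over blocks. The only cosmetic difference is that you partition the admissible $|b|$ dyadically whereas the paper partitions it in arithmetic blocks of width $Cr^{2/3}$ indexed by $k$; both yield a summable tail dominated by the smallest-scale term, and both defer the regime where $|b|$ is linear in $r$ to a cruder argument.
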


\begin{proof}
For convenience we divide the events $\cB_{0}$ into events $\cB_{0,k}$ depending on the location of the point $w_{*}$ as in the definition of the event, i.e., for $k\in \Z\setminus \{-1\}$, we say that $\cB_{0,k}$ holds if the event $\cB_{0}$ holds with a point $w_{*}$ located on the line segment $\sL_{k}$ joining $(r/2+ (C_2+kC)r^{2/3},r/2- (C_2+kC)r^{2/3})$ and $(r/2+ (C_2+(k+1)C)r^{2/3},r/2- (C_2+(k+1)C)r^{2/3})$. Clearly it suffices to prove that $\P(\cB_{0,k})\leq e^{-ck}e^{-cC_2}$. Clearly, 
$$\cB_{0,k} \subseteq \{\sup_{u\in A, w\in \sL_{k}, v\in B} T_{u,w}+T_{w,v} \geq 4r-C_1r^{1/3}\}.$$
We shall ignore the cases where $kr^{2/3}$ is linear in $r$, as such cases can be ruled out rather easily. For smaller values of $k$ observe that any line joining $u$ and $w$ and $w$ and $v$ must have slopes bounded away from $0$ and $\infty$, and hence Proposition \ref{t:treesup} applies. In particular this implies using Theorem \ref{t:moddev} as in the proof of Lemma \ref{l:p2l} that 
$$ \sup_{u\in A, w\in \sL_{k}} \E T_{u,w}= 2r+ O(C^2 r^{1/3})- O((C_2+kC)^2);$$
$$ \sup_{w\in \sL_{k}, v\in B} \E T_{w,v}= 2r+ O(C^2 r^{1/3})- O((C_2+kC)^2);$$
By choosing $C_2$ to be a large power of $C_1$ it follows that 
$$ \sup_{u\in A, w\in \sL_{k}, v\in B} \E T_{u,w}+ \E T_{w,v}\leq  4r-O(C_2^2+k^2)r^{1/3}.$$
Using Proposition \ref{t:treesup} we can now conclude $\P(\cB_{0,k})\leq e^{-ck}e^{-cC_2}$ as required.
\end{proof}

\begin{lemma}
\label{l:j}
In the above set up, for $j\geq 1$, and $1\leq \ell \leq 2^j$, we have $\P(\cB_{\ell,j}\cap \cB_{j-1}^{c})\leq 4^{-j}e^{-cC_2^{1/3}}$.
\end{lemma}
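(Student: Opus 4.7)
The plan is to exploit the mismatch between the weak transversal bound $C_{2,j-1}r^{2/3}$ available at the bracketing lines $L_{\pm}=\{x+y=(\ell\pm 1)2^{-j}r\}$ (from $\cB_{j-1}^{c}$) and the stronger transversal requirement $C_{2,j}r^{2/3}$ at $L=\{x+y=\ell 2^{-j}r\}$ (imposed by $\cB_{\ell,j}$); this mismatch forces a quantitative expected-length deficit on the two inner segments, which I will then convert into a tail bound via Proposition \ref{t:treesup}. First I would reduce to odd $\ell$: when $\ell$ is even, $\ell 2^{-j}r=(\ell/2)2^{-(j-1)}r$ is itself a level-$(j-1)$ line (with index in $[1,2^{j}-1]$), and since $C_{2,j}>C_{2,j-1}$ we have $\cB_{\ell,j}\subseteq \cB_{\ell/2,j-1}$, so $\cB_{\ell,j}\cap\cB_{j-1}^{c}=\emptyset$. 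For odd $\ell$ the boundary cases $\ell=1,2^{j+1}-1$ are handled by noting that then one of $L_{\pm}$ coincides with $A$ or $B$, for which $|x-y|\le Cr^{2/3}\le C_{2,j-1}r^{2/3}$ provided $C_{2}$ is taken sufficiently large. On $\cB_{\ell,j}\cap\cB_{j-1}^{c}$, by monotonicity a witness path $\gamma$ meets $L_{-}$, $L$, $L_{+}$ at unique points $u_{1},w,v_{1}$ with $|u_{1,1}-u_{1,2}|,|v_{1,1}-v_{1,2}|<C_{2,j-1}r^{2/3}$ and $|w_{1}-w_{2}|\ge C_{2,j}r^{2/3}$, so that $\ell(\gamma)\le T_{u,u_{1}}+T_{u_{1},w}+T_{w,v_{1}}+T_{v_{1},v}$.

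Next I would compute expectations using Theorem \ref{t:moddev}. For any pair $u\preceq v$ with anti-diagonal distance $D=d(v)-d(u)$ and transversal displacement $\tau\le D$, the exact mean formula $\E T_{\mathbf{0},(n+s,n-s)}=2n+2\sqrt{n^{2}-s^{2}}$ (applied with $2n=D$, $2s=\tau$) together with the elementary inequality $D-\sqrt{D^{2}-\tau^{2}}\ge \tau^{2}/(2D)$ yields $\E T_{u,v}\le 2D-\tau^{2}/(2D)+O(D^{1/3})$, where the error comes from the centering in Theorem \ref{t:moddev}. On the inner segments $D=2^{-j}r$ and $\tau\ge (C_{2,j}-C_{2,j-1})r^{2/3}=C_{2,j-1}\cdot 2^{-(j-1)/10}r^{2/3}$, giving a combined deficit of at least $c\,C_{2,j-1}^{2}\cdot 2^{(4j-4)/5}r^{1/3}$; the outer segments contribute the crude bound $\E T_{u,u_{1}}+\E T_{v_{1},v}\le 4r-4\cdot 2^{-j}r+O(r^{1/3})$. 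Using $C_{2,j-1}^{2}\ge (C_{2}/10^{5})^{2}$ and the choice $C_{2}=C^{\beta}\gg C_{1}=C^{\alpha}$, the total expected sum is at most $4r-c'\,C_{2}^{2}\cdot 2^{4j/5}r^{1/3}$; since on $\cB_{\ell,j}$ the actual sum is at least $\ell(\gamma)\ge 4r-C_{1}r^{1/3}$, the sum of the four centered deviations exceeds $\tfrac{c'}{2}\,C_{2}^{2}\cdot 2^{4j/5}r^{1/3}$.

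The final step is to invoke Proposition \ref{t:treesup} (in the slope-$\psi$ extended form recalled in Appendix \ref{s:appa}) on enclosing parallelograms of scale $O(2^{-j}r)$ for the inner segments and $O(r)$ for the outer ones; the sup over endpoint pairs is already encoded in the statement, so no additional union bound over $u_{1}, w, v_{1}$ is needed. Distributing the deviation budget among the four terms gives
\begin{equation}
\P(\cB_{\ell,j}\cap\cB_{j-1}^{c})\le 4\exp\bigl(-c''\,C_{2}^{2}\cdot 2^{4j/5}\bigr),
\end{equation}
which is bounded by $4^{-j}e^{-cC_{2}^{1/3}}$ once $C_{2}$ is sufficiently large, since the exponent grows exponentially in $j$ and absorbs both $\ln 4^{j}$ and $cC_{2}^{1/3}$. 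The hard part will be the scale bookkeeping in the mean computation: one must verify that the estimate $\E T\le 2D-\tau^{2}/(2D)+O(D^{1/3})$ remains valid at every relevant scale $D=2^{-j}r$ (the regime $\tau>D$ being vacuous by monotonicity, as no monotone lattice path exists) and that the extended Proposition \ref{t:treesup} applies to the slopes of the inner-segment parallelograms; both issues are handled by taking $\psi$ large and using the universal inequality $D-\sqrt{D^{2}-\tau^{2}}\ge \tau^{2}/(2D)$ valid throughout $\tau\in[0,D]$.
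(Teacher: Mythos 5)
Your overall strategy is the same as the paper's: reduce to odd $\ell$ (your clean observation that even-$\ell$ events are already inside $\cB_{j-1}$ is nice), bracket the level-$j$ line by the adjacent level-$(j-1)$ lines $L_\pm=\sL_m,\sL_{m+1}$, decompose the witness path into outer pieces (from $A$ to $L_-$ and from $L_+$ to $B$) and inner pieces through the far-out crossing point $w$, extract a mean deficit of order $C_{2,j-1}^2 2^{4(j-1)/5}r^{1/3}$ from $\tau^2/(2D)$ at the inner scale $D=2^{-j}r$, and convert this into a tail bound which is exponentially summable in $j$. This matches the paper's route; the paper just packages the inner-piece estimate as a separate Lemma~\ref{l:general} (and handles the outer pieces by the point-to-line estimate Lemma~\ref{l:p2l} rather than a direct mean computation), but the substance is the same.

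There is, however, a genuine gap in your last step. You write that ``the sup over endpoint pairs is already encoded in the statement, so no additional union bound over $u_1,w,v_1$ is needed,'' and then apply Proposition~\ref{t:treesup} once per segment. Proposition~\ref{t:treesup} (even in the extended form recalled in Appendix~\ref{s:appa}) controls the sup over pairs within a fixed rectangle of \emph{bounded aspect ratio} $D\times QD^{2/3}$ with $Q=O(1)$. For the inner segment $u_1\to w$ the scale is $D=2^{-j}r$, but the admissible transversal locations of $w$ span an interval whose length, relative to $D^{2/3}=(2^{-j}r)^{2/3}$, grows like $2^{2j/3}$ (and also diverges with $r$ at fixed $j$, since $w$ can range from $C_{2,j}r^{2/3}$ out to the slope cutoff of order $2^{-j}r$). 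So a single invocation of Proposition~\ref{t:treesup} does not cover the sup over $w$; you must first discretize the admissible range of $w$ into $\approx (2^{-j}r)^{1/3}$ sub-intervals of length $(2^{-j}r)^{2/3}$ and union-bound, using the quadratic growth of the deficit $\tau^2/(2D)$ in the offset $\tau$ to make the resulting geometric sum be dominated by its first term. This is exactly the bookkeeping that the paper isolates in Lemma~\ref{l:0} and reuses in Lemma~\ref{l:general}; your sketch omits it and in fact explicitly (and incorrectly) asserts it is unnecessary. Once this union bound is inserted, your mean-deficit computation does yield the claimed $4^{-j}e^{-cC_2^{1/3}}$ bound, so the gap is one of justification rather than of final conclusion.
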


Before proving Lemma \ref{l:j} we need the following more general result.

\begin{lemma}
\label{l:general}
The following statement is true uniformly in all large $C$, $k$, and $r$. Suppose $\ch$ denote the event that the length of the best path from $A$ to $B$ that intersects the line $x+y=r$ outside $U_{C+k}$ is at least $4r-c_1k^{2}r^{1/3}$, where $c_1>0$ is some small constant. Then we have $\P(\ch)\leq C^2e^{-ck^2}$. 
\end{lemma}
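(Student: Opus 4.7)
The plan is to mimic the proof of Lemma \ref{l:0}, generalising it to a free deviation parameter $k$. First I would discretise the three relevant pieces of geometry: partition the sides $A$ and $B$ into $\Theta(C)$ sub-intervals $A_i, B_{i'}$ of anti-diagonal length $r^{2/3}$, and partition the portion of $\{x+y=r\}$ lying outside $U_{C+k}$ into sub-segments $M_{j}^{\pm}$ for $j \geq k$, where $M_{j}^{+} = \{w : w_1+w_2=r,\, (C+j)r^{2/3} \leq w_1-w_2 \leq (C+j+1)r^{2/3}\}$ and $M_{j}^{-}$ is its reflection. The event $\ch$ is then contained in the union of the events that some triple $(A_i, M_j^\pm, B_{i'})$ hosts a path of weight at least $4r - c_1 k^2 r^{1/3}$.

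For fixed $u \in A_i$, $w \in M_j^+$, $v \in B_{i'}$, the transversal displacements satisfy $(w_1-u_1)-(w_2-u_2) \geq j r^{2/3}$ and $(v_1-w_1)-(v_2-w_2) \geq j r^{2/3}$ since $|u_1-u_2|, |v_1-v_2| \leq Cr^{2/3}$. Using Theorem \ref{t:moddev} together with the identity $(\sqrt{a}+\sqrt{b})^2 = (a+b) + \sqrt{(a+b)^2 - (a-b)^2}$ and the elementary bound $\sqrt{1-x} \leq 1 - x/2$, I get $\E T_{u,w}, \E T_{w,v} \leq 2r - (j^2/2) r^{1/3} + O(r^{1/3})$, and hence $\E[T_{u,w}+T_{w,v}] \leq 4r - j^2 r^{1/3} + O(r^{1/3})$. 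Choosing $c_1 < 1/4$ and taking $k$ large enough guarantees a mean-to-threshold gap of at least $(j^2/2) r^{1/3}$ uniformly in $j \geq k$. The parallelogram version of Proposition \ref{t:treesup} (applicable because the slopes between $A_i$ and $M_j^\pm$, and between $M_j^\pm$ and $B_{i'}$, remain bounded away from $0$ and $\infty$, per the discussion following Proposition \ref{t:treebox}) then yields
\begin{align*}
\P\Big(\sup_{u\in A_i,\, w\in M_j^\pm,\, v\in B_{i'}} (T_{u,w}+T_{w,v}) - \E[T_{u,w}+T_{w,v}] \geq \tfrac{j^2}{2} r^{1/3}\Big) \leq e^{-cj^2}.
\end{align*}
A union bound over the $\Theta(C^2)$ pairs $(i,i')$, both signs of $M_j^\pm$, and all $j \geq k$ then gives $\P(\ch) \leq C^2 \sum_{j \geq k} e^{-cj^2} \leq C^2 e^{-ck^2}$.

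The only technicality to watch is the extreme regime $j$ comparable to $r^{1/3}$, where the slopes from $u$ to $w$ or $w$ to $v$ become unbounded and Proposition \ref{t:treesup} in its cleanest form does not directly apply. In this range, however, the mean estimate already drops by $\Theta(r)$, far exceeding the required threshold gap, so the crude exponential tail of Theorem \ref{t:moddev} applied directly to those boundary strips (in the spirit of the final step of the proof of Lemma \ref{l:p2l}) provides a super-exponentially small probability that is absorbed into the constants, completing the proof.
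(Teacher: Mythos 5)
Your proposal is correct and follows essentially the same route as the paper's (sketched) proof: discretize $A$ and $B$ into $\Theta(C)$ intervals, decompose the crossing locus on $\{x+y=r\}$ into on-scale segments indexed by the deviation $j \geq k$, estimate the quadratic mean drop via Theorem~\ref{t:moddev}, control fluctuations with the parallelogram version of Proposition~\ref{t:treesup}, and sum the union bound. The paper simply defers most of these details to "as in the proof of Lemma~\ref{l:0}," which your write-up fills in faithfully, including the separate treatment of the $j \asymp r^{1/3}$ degenerate-slope regime.
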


\begin{proof}
Divide $A$ and $B$ into disjoint intervals $A_{i}, B_{i}, i=1,2,\ldots , C$ of length $r^{2/3}$ each. Now divide the event $\ch$ into $\ch_{i,j}$ where the endpoints of the paths are at $A_i$ and $B_j$ respectively. Clearly it suffices to prove that $\P(\ch_{i,j})\leq e^{-ck^2}$, this is done as in the proof of Lemma \ref{l:0} by noticing that expected length of any path from $A_i$ to the line $\{x+y=r\}$ outside $U_{C+k}$ and from there to $B_j$ is at most $4r-c_2k^2$ for some $c_2>0$. The proof is completed by an application of Proposition \ref{t:treesup} by taking $c_1$ smaller than $c_2$, and using a careful union bound as in Lemma \ref{l:0}. We omit the details. 
\end{proof}

We can now complete the proof of Lemma \ref{l:j}.

\begin{proof}[Proof of Lemma \ref{l:j}]
Observe first that it is only required to prove the lemma for odd $\ell$. Without loss of generality let us assume that $\ell=2m+1$. On $\cB^c_{j-1}$ we only need to control the length of the best path that crosses the line $\sL_{m}: x+y=m2^{-(j-1)}r$ and $\sL_{m+1}:=x+y=(m+1)2^{-(j-1)}r$ at points $u=(u_1, u_2)$ and $v=(v_1,v_2)$ respectively where $|u_1-u_2|, |v_1-v_2|\leq C_{2,j-1}r^{2/3}$. Observe that, for some fixed constant $K$, we have, using Lemma \ref{l:p2l}, with probability at least $1-4^{-j}e^{-cC_2^{1/3}}$ we have that the best path from $\mathbf{0}$ to $\sL_{m}$ has length at most $4m2^{-(j-1)}r+(C_2+K\log j)r^{1/3}$ and the best path from $\mathbf{r}$ to $\sL_{m+1}$ has length at most $4(r-(m+1)2^{-(j-1)r})+(C_2+K\log j)r^{1/3}$. Hence it suffices to prove that $$\P(\sup_{u,v} T^{*}_{u,v}\geq 4\times 2^{-(j-1)}r-(3C_2+2K\log j) r^{1/3})\leq 4^{-j}e^{-cC_2}$$ where the supremum is taken over all pairs of points $u,v$ described above and $T^*_{u,v}$ denotes the length of the best path from $u$ to $v$ that exits $U_{C_{2,j}}$ at the line $x+y=(2m+1)2^{-j}r$. This statement follows from Lemma \ref{l:general} by setting 
$$r=2^{-(j-1)}r, C=C_{2,j-1}2^{2(j-1)/3}, k=(C_{2,j}-C_{2,j-1})2^{2j/3}=C_{2,j-1}2^{-j/10}2^{2j/3}.$$ Observe that this provides for a penalty of $C^2_{2,j-1}2^{34j/30}2^{-j/3}r^{1/3}$ for the part of the path. We are done by observing (i)  this penalty is $\gg (C_2+K\log j) r^{1/3}$ for large $j$, and (ii) for our choices for $C$ and $k$ we have $C^2e^{-ck^2}\leq 4^{-j}e^{-cC_2^{1/3}}$, as required. 
\end{proof}

\end{document}